\def\Xint#1{\mathchoice
    {\XXint\displaystyle\textstyle{#1}}%
    {\XXint\textstyle\scriptstyle{#1}}%
    {\XXint\scriptstyle\scriptscriptstyle{#1}}%
    {\XXint\scriptscriptstyle\scriptscriptstyle{#1}}%
    \!\int}
    \def\XXint#1#2#3{{\setbox0=\hbox{$#1{#2#3}{\int}$}
    \vcenter{\hbox{$#2#3$}}\kern-.5\wd0}}
    \def\dashint{\Xint-}
\renewcommand{\chi}{{\bf 1}}
\theoremstyle{plain}
\newtheorem{theorem}[equation]{Theorem}
\newtheorem{lemma}[equation]{Lemma}
\newtheorem{proposition}[equation]{Proposition}
\theoremstyle{definition}
\theoremstyle{remark}
\newtheorem{remark}[equation]{Remark}
\numberwithin{equation}{section}
\def \R{ \mathbb{R} }
\def \N{ \mathbb{N} }
\def \iint{\int\!\!\!\int}
\def \Scal{ \mathcal{S} }
\def \hh{ \mathrm{H} }
\def \pp{ \mathrm{P} }
\def \Gcal { \mathcal{G} }
\def \Grm{ \mathrm{G} }
\def\div{\mathop{\rm div}}
\renewcommand{\Re}{{\rm Re}\,}
\DeclareMathOperator{\supp}{supp}
\begin{document}
\allowdisplaybreaks

\title[Weighted Hardy spaces associated with elliptic operators]{Weighted Hardy spaces associated with elliptic operators.
\\[0.3cm]
{\small Part I: Weighted norm inequalities for conical square functions}}

\author{Jos\'e Mar{\'\i}a Martell}
\address{Jos\'e Mar{\'\i}a Martell
\\
Instituto de Ciencias Matem\'aticas CSIC-UAM-UC3M-UCM
\\
Consejo Superior de Investigaciones Cient{\'\i}ficas
\\
C/ Nicol\'as Cabrera, 13-15
\\
E-28049 Madrid, Spain} \email{chema.martell@icmat.es}

\author{Cruz Prisuelos-Arribas}

\address{Cruz Prisuelos-Arribas
Instituto de Ciencias Matem\'aticas CSIC-UAM-UC3M-UCM
\\
Consejo Superior de Investigaciones Cient{\'\i}ficas
\\
C/ Nicol\'as Cabrera, 13-15
\\
E-28049 Madrid, Spain} \email{cruz.prisuelos@icmat.es}

\thanks{The research leading to these results has received funding from the European Research
Council under the European Union's Seventh Framework Programme (FP7/2007-2013)/ ERC
agreement no. 615112 HAPDEGMT. The first author was supported in part by MINECO Grant
MTM2010-16518, ICMAT Severo Ochoa project SEV-2011-0087. The second author was supported in part by
ICMAT Severo Ochoa project SEV-2011-0087.}

\date{June 24, 2014. \textit{Revised}: \today}

\subjclass[2010]{42B30, 42B25, 35J15, 47A60}

\keywords{Hardy spaces, conical square functions, tent spaces, Muckenhoupt weights, extrapolation, elliptic
operators, Heat and Poisson semigroup, off-diagonal estimates.}

\begin{abstract}
This is the first part of a series of three articles. In this paper, we obtain weighted norm inequalities for different conical square functions associated with the Heat and the Poisson semigroups generated by a second order divergence form elliptic operator with bounded complex coefficients. We find classes of Muckenhoupt weights where the square functions are comparable and/or bounded. These classes are natural from the point of view of the ranges where the unweighted estimates hold. In doing that, we obtain sharp weighted change of angle formulas which allow us to compare conical square functions with different cone apertures in weighted Lebesgue spaces. A key ingredient in our proofs is a generalization of the Carleson measure condition which is more natural when estimating the square functions below $p=2$.
\end{abstract}

\maketitle

\tableofcontents

\section{Introduction}\label{section:intro}

In the last decade, after the solution of the Kato conjecture \cite{AHLMT}, there has been a big interest in developing a Calder\'on-Zygmund theory appropriate for the operators (functional calculus, Riesz transforms, square functions, etc.) that appear naturally associated with divergence form elliptic operators with complex bounded coefficients. In general, these operators cease to be classical Calder\'on-Zygmund operators, as their kernels do not have the required decay or smoothness. This causes, in particular, that their range of boundedness may no longer be the interval $(1,\infty)$ but some proper (small) bounded subinterval containing $p=2$. Auscher, in a very nice monograph (\cite{Auscher}), obtained a new Calder\'on-Zygmund theory adapted to singular ``non-integral'' operators arising from elliptic operators (see \cite{Auscher} for historic remarks and references). A key ingredient in the method is and idea used systematically in \cite{BK1} (see also \cite{HofmannMartell}): in place of using kernels, which do not have reasonable behavior, there is a representation of the operators in question in terms of the Heat semigroup $\{e^{-t\,L}\}_{t>0}$ (or its gradient) that has some integral decay measured in terms of the so-called ``off-diagonal'' or Gaffney type estimates. The bottom line of \cite{Auscher} is that the operators under consideration are bounded precisely in the ranges where either the semigroup or its gradient has a nice behavior.

After Auscher's fundamental monograph there has been quite a number of papers whose goal is to continue with the development of a generalized Calder\'on-Zygmund theory. We shall mention some that are relevant for the goal of the present work. Auscher and the first named author of this paper wrote a series of papers \cite{AuscherMartell:I, AuscherMartell:II, AuscherMartell:III} where the weighted theory was developed and where some appropriate classes of Muckenhoupt weights were found. While vertical square functions (i.e., usual Littlewood-Paley-Stein functionals) behave as expected with and without weights (see, resp., \cite{Auscher, AuscherMartell:III}), conical square functions have better ranges of boundedness in the unweighted case, even going beyond the intervals where the semigroup or its gradient has a nice behavior, see \cite{AuscherHofmannMartell}.

In harmonic analysis, and more in particular in the so-called Calder\'on-Zygmund theory, where the typical range of $L^p$-boundedness is the interval $(1,\infty)$, the natural endpoint spaces are the Hardy space $H^1(\R^n)$ for $p=1$ ($H^p(\R^n)$ for $p<1$) and the space of bounded mean oscillation functions $\mathrm{BMO}(\R^n)$ for $p=\infty$. For instance, it is well-known that the classical Riesz transform (associated with the Laplacian) is bounded from $H^1(\R^n)$ to $L^1(\R^n)$, and it becomes natural to study whether Riesz transforms associated with general elliptic operators behave well in Hardy spaces. Classical real-variable Hardy spaces in $\R^n$ have been deeply studied since the fundamental paper of Stein and Weiss, \cite{Stein-Weiss}, on systems of conjugate harmonic functions. The pioneering paper of Fefferman and Stein \cite{Fef-Ste} showed that, besides the intimate relation between Hardy spaces and
harmonic functions, Hardy spaces can be characterized in terms of general approximations of the identity or by general conical square functions (i.e., area functionals of Lusin type). This eventually led to some developments of these spaces without using their connection with the Laplacian, which could be used for general elliptic operators.  However, this is not the case: if we had that the associated Riesz transform maps continuously $H^1(\R^n)$ into $L^1(\R^n)$, then the interpolation with the $L^2(\R^n)$ boundedness (which is the Kato conjecture) would imply boundedness on $L^p(\R^n)$ for $1<p<2$ violating  some of the results in \cite{Auscher} (see \cite{HofmannMayboroda} for more details). In the same way as real-variable Hardy spaces were originally defined in connection with the Laplacian, in the last decade there has been a big interest in studying Hardy and other related spaces adapted to elliptic operators, see \cite{Auscher-McIntosh-Russ, Auscher-McIntosh-Morris, Auscher-Russ, BCKYY1, BCKYY2, Anh-Duong, Duong-Yan-BMO, Duong-Yan-H1,  HLMMY, HofmannMayboroda, HofmannMayborodaMcIntosh} and the references therein. Among them we highlight \cite{HofmannMayboroda, HofmannMayborodaMcIntosh} where Hardy  spaces, $\mathrm{BMO}$, and some other related spaces adapted to general divergence form elliptic operators were successfully developed.

The goal of this series of papers is to continue with the development of generalized Calder\'on-Zygmund theory for elliptic operators and study the corresponding weighted Hardy spaces. Classical (or Laplacian-adapted) weighted Hardy spaces were first introduced by J. Garc{\'\i}a-Cuerva \cite{GC-w-HP} (see also \cite{ST}). Our aim is to present a satisfactory Hardy space theory for general elliptic operators with bounded complex coefficients complementing the results in \cite{HofmannMayboroda, HofmannMayborodaMcIntosh}. Our spaces generalize those in \cite{BCKYY1, Anh-Duong, Liu-Song} in the Euclidean setting where the adapted weighted Hardy spaces are associated with a friendlier class of non-negative self-adjoint operators whose heat kernel satisfy Gaussian upper bounds. We also generalize \cite{BCKYY2} by considering molecules living in weighted spaces and also by being able to fully recover \cite{HofmannMayboroda, HofmannMayborodaMcIntosh}.

In the first part of the series, which is the present paper, we study the weighted norm inequalities for conical square functions. We establish boundedness and comparability in weighted Lebesgue spaces of different square functions  using the Heat and Poisson semigroups. In the second part, \cite{Mar-Pri-2}, we shall use  these square functions to define several weighted Hardy spaces $H^1_L(w)$. We show that they are one and the same in view of the fact that the square functions are comparable in the corresponding weighted spaces. We also show that Hardy spaces can be equivalently defined using molecules and/or non-tangential maximal functions. The study of $H^p_L(w)$ for other values of $p$ is in the third part \cite{Pri}.

In contrast with \cite{HofmannMayboroda, HofmannMayborodaMcIntosh}, where some of the unweighted estimates for the conical square function are taken off-the-shelf from \cite{Auscher}, our first difficulty consists in proving that conical square functions are bounded on weighted spaces for some classes of weights ``adapted'' to the unweighted range of boundedness. This was left open in \cite{AuscherHofmannMartell} since some of the existing arguments naturally split the boundedness into the cases  $p<2$ and $p>2$. That procedure, as learned from \cite{AuscherMartell:III}, is inefficient when adding weights: to obtain the right class of weights one has to be able to work with  the whole interval where the unweighted estimates hold. Splitting the interval would lead to some distortion in the class of weights. To illustrate this, let us recall that in \cite{AuscherHofmannMartell} it is shown that the square function $G_{0,H}$, defined in \eqref{square-H-2} below, is bounded on $L^p(\R^n)$  for every $p_-(L)<p<\infty$ where $p_-(L)$, introduced in \eqref{p-} below, is strictly smaller than $2$. Using the approach in \cite{AuscherHofmannMartell}, and ``stepping'' at $p=2$, this  square function is bounded on $L^p(w)$ for every $2<p<\infty$ and $w\in A_{p/2}$ (see the precise definitions below). However, as we shall see in Theorem \ref{thm:SF-Heat}, one has boundedness on $L^p(w)$ for every $p_-(L)<p<\infty$ and $w\in A_{p/p_-(L)}$, hence in a bigger range and a wider class of weights since $p_-(L)<2$. Moreover, the obtained class of weights is the natural one adapted to the unweighted range $(p_-(L),\infty)$, in view of  the version of the Rubio de Francia extrapolation theorem in \cite[Theorem 4.9]{AuscherMartell:I} or \cite[Theorem 3.31]{CruzMartellPerez}.  See also \cite{Anh-Duong, Liu-Song} for related issues. The goal of the present paper is to present a library of weighted norm inequalities for the different square functions that can be defined using the Heat or the Poisson semigroup. We look for two different types of estimates: the first type of estimates will give us comparison among the square functions, and the second boundedness. The main idea is to show that all these square functions can be controlled by either $\Scal_{\hh}$ or $\Gcal_{\hh}$ (see below), and independently to obtain that these are bounded on $L^p(w)$ for some range of $p$'s and for some class of Muckenhoupt weights.

To be more precise let us set our background hypotheses. Let $A$ be an $n\times n$ matrix of complex and
$L^\infty$-valued coefficients defined on $\R^n$. We assume that
this matrix satisfies the following ellipticity (or \lq\lq
accretivity\rq\rq) condition: there exist
$0<\lambda\le\Lambda<\infty$ such that
$$
\lambda\,|\xi|^2
\le
\Re A(x)\,\xi\cdot\bar{\xi}
\quad\qquad\mbox{and}\qquad\quad
|A(x)\,\xi\cdot \bar{\zeta}|
\le
\Lambda\,|\xi|\,|\zeta|,
$$
for all $\xi,\zeta\in\mathbb{C}^n$ and almost every $x\in \R^n$. We have used the notation
$\xi\cdot\zeta=\xi_1\,\zeta_1+\cdots+\xi_n\,\zeta_n$ and therefore
$\xi\cdot\bar{\zeta}$ is the usual inner product in $\mathbb{C}^n$. Note
that then
$A(x)\,\xi\cdot\bar{\zeta}=\sum_{j,k}a_{j,k}(x)\,\xi_k\,\bar{\zeta_j}$.
Associated with this matrix we define the second order divergence
form elliptic operator
$$
L f
=
-\div(A\,\nabla f),
$$
which is understood in the standard weak sense as a maximal-accretive operator on $L^2(\R^n,dx)$ with domain $\mathcal{D}(L)$ by means of a
sesquilinear form.

The operator $-L$ generates a $C^0$-semigroup, $\{e^{-t L}\}_{t>0}$, of contractions on $L^2(\mathbb{R}^n)$ which is called the Heat semigroup. Using this semigroup and the corresponding Poisson semigroup, $\{e^{-t\,\sqrt{L}}\}_{t>0}$, one can define different conical square functions which all have an expression of the form
$$
Q f(x)
=\left(\iint_{\Gamma(x)}|T_t f(y)|^2 \frac{dy \, dt}{t^{n+1}}\right)^{\frac{1}{2}},
\qquad
x\in\R^n.
$$
where $\Gamma(x): =\{(y,t)\in \R^{n+1}_+: |x-y|<t\}$ denotes the cone (of aperture 1) with vertex at $x\in\R^n$ (see Section \ref{section:TSpaces} for more details including  a discussion about the use of cones with different apertures).  More precisely, we introduce the following conical square functions written in terms of the Heat semigroup, $\{e^{-t L}\}_{t>0}$, (hence the subscript $\hh$): for every $m\in \mathbb{N}$,
\begin{align} \label{square-H-1}
\mathcal{S}_{m,\hh}f(x) & = \left(\iint_{\Gamma(x)}|(t^2L)^{m} e^{-t^2L}f(y)|^2 \frac{dy \, dt}{t^{n+1}}\right)^{\frac{1}{2}},
\end{align}
and, for every $m\in \mathbb{N}_0:=\mathbb{N}\cup\{0\}$,
\begin{align}
\mathrm{G}_{m,\hh}f(x)& =\left(\iint_{\Gamma(x)}|t\nabla_y(t^2L)^m e^{-t^2L}f(y)|^2 \frac{dy \, dt}{t^{n+1}}\right)^{\frac{1}{2}},
\label{square-H-2}\\[4pt]
\mathcal{G}_{m,\hh}f(x)&
=
\left(\iint_{\Gamma(x)}|t\nabla_{y,t}(t^2L)^m e^{-t^2L}f(y)|^2 \frac{dy \, dt}{t^{n+1}}\right)^{\frac{1}{2}}.
\label{square-H-3}
\end{align}

In the same manner, let us consider conical square functions associated with the Poisson semigroup, $\{e^{-t \sqrt{L}}\}_{t>0}$, (hence the subscript $\pp$):  given $K\in \mathbb{N}$,
\begin{align}
\mathcal{S}_{K,\pp}f(x)
&=
\left(\iint_{\Gamma(x)}|(t\sqrt{L}\,)^{2K} e^{-t\sqrt{L}}f(y)|^2 \frac{dy \, dt}{t^{n+1}}\right)^{\frac{1}{2}},
\label{square-P-1}
\end{align}
and for every $K\in \mathbb{N}_0$,
\begin{align}
\mathrm{G}_{K,\pp}f(x)
&=\left(\iint_{\Gamma(x)}|t\nabla_y (t\sqrt{L}\,)^{2K} e^{-t\sqrt{L}}f(y)|^2 \frac{dy \, dt}{t^{n+1}}\right)^{\frac{1}{2}},
\label{square-P-2}
\\[4pt]
\mathcal{G}_{K,\pp}f(x)
&=
\left(\iint_{\Gamma(x)}|t\nabla_{y,t}(t\sqrt{L}\,)^{2K} e^{-t\sqrt{L}}f(y)|^2 \frac{dy \, dt}{t^{n+1}}\right)^{\frac{1}{2}}.
\label{square-P-3}
\end{align}
Corresponding to the case $m=0$ or $K=0$, we simply write $\mathrm{G}_{\hh}f:=\mathrm{G}_{0,\hh}f$,
$\mathcal{G}_{\hh}f:=\mathcal{G}_{0,\hh}f$,  $\mathrm{G}_{\pp}f:=\mathrm{G}_{0,\pp}f$, and
$\mathcal{G}_{\pp}f:=\mathcal{G}_{0,\pp}f$. Besides, we set $\Scal_{\hh}f:=\Scal_{1,\hh}f$, and $\Scal_{\pp}f:=\Scal_{1,\pp}f$.

\medskip

In order to give the statements of our main results we need to introduce some notation. As in \cite{Auscher} and \cite{AuscherMartell:II}, we denote by $(p_-(L),p_+(L))$ the maximal open interval on which the Heat semigroup, $\{e^{-tL}\}_{t>0}$, is uniformly bounded on $L^p(\mathbb{R}^n)$:
\begin{align}\label{p-}
p_-(L) &:= \inf\left\{p\in(1,\infty): \sup_{t>0} \|e^{-t^2L}\|_{L^p(\mathbb{R}^n)\rightarrow L^p(\mathbb{R}^n)}< \infty\right\},
\\[4pt]
p_+(L)& := \sup\left\{p\in(1,\infty) : \sup_{t>0} \|e^{-t^2L}\|_{L^p(\mathbb{R}^n)\rightarrow L^p(\mathbb{R}^n)}< \infty\right\}.
\label{p+}
\end{align}
Note that in place of the semigroup $\{e^{-t L}\}_{t>0}$ we are using its rescaling $\{e^{-t^2 L}\}_{t>0}$. We do so since all the ``Heat'' square functions are written using the latter and also because in the context of the off-diagonal estimates discussed below it will simplify some computations.

Besides, for every $K\in\mathbb{N}_0$ let us set
$$
p_+(L)^{K,*}:=
\left\{
\begin{array}{ll}
\dfrac{p_+(L)\,n}{n-(2K+1)\,p_+(L)}, &\quad\mbox{ if}\quad(2K+1)\,p_+(L)<n,
\\[10pt]
\infty, &\quad\mbox{ if}\quad(2K+1)\,p_+(L)\ge n.
\end{array}
\right.
$$
Corresponding to the case $K=0$, we write $p_+(L)^{*}:=p_+(L)^{0,*}$.

We shall work with Muckenhoupt weights, $w$, which are locally integrable positive functions. We say that a weight $w\in A_1$ if, for every ball $B\subset \mathbb{R}^n$, there holds
$$
\dashint_Bw(x) \, dx\leq C w(y), \quad \textrm{for a.e. } y\in B,
$$
or, equivalently, $\mathcal{M} w\le C\,w$  a.e., where $\mathcal{M}$ denotes the Hardy-Littlewood
maximal operator over balls in $\R^n$.
For each $1<p<\infty$, we say that $w\in A_p$ if it satisfies
$$\left(\dashint_Bw(x) \, dx\right)\left(\dashint_Bw(x)^{1-p'} \, dx\right)^{p-1}
\leq C, \quad \forall B\subset \mathbb{R}^n.$$
In the previous expression $p'=p/(p-1)$ denotes the conjugate exponent of $p$.
The reverse H\"older classes are defined as follows: for each $1<s<\infty$,
$w\in RH_s$ if, for every ball $B\subset \mathbb{R}^n$, we have
$$\left(\dashint_Bw(x)^s \, dx\right)^{\frac{1}{s}}\leq C\dashint_Bw(x) \, dx.$$
For $s=\infty$, $w\in RH_{\infty}$ provided that there exists a constant $C$ such that for every ball $B\subset \mathbb{R}^n$
$$
w(y)\leq C\dashint_Bw(x) \, dx, \quad \textrm{for a.e. }y\in B.
$$
Notice that we have excluded the case $q = 1$ since the class $RH_1$ consists of all
weights, and that is the way $RH_1$ is understood in what follows.

We sum up some of the properties of these classes in the following result, see for instance
\cite{Duo}, \cite{GCRF85}, or \cite{Grafakos}. 

\begin{proposition}\label{prop:weights}\
\begin{enumerate}
\renewcommand{\theenumi}{\roman{enumi}}
\renewcommand{\labelenumi}{$(\theenumi)$}
\addtolength{\itemsep}{0.2cm}

\item $A_1\subset A_p\subset A_q$ for $1\le p\le q<\infty$.

\item $RH_{\infty}\subset RH_q\subset RH_p$ for $1<p\le q\le \infty$.

\item If $w\in A_p$, $1<p<\infty$, then there exists $1<q<p$ such
that $w\in A_q$.

\item If $w\in RH_s$, $1<s<\infty$, then there exists $s<r<\infty$ such
that $w\in RH_r$.

\item $\displaystyle A_\infty=\bigcup_{1\le p<\infty} A_p=\bigcup_{1<s\le
\infty} RH_s$.

\item If $1<p<\infty$, $w\in A_p$ if and only if $w^{1-p'}\in
A_{p'}$.

\item For every $1<p<\infty$, $w\in A_p$ if and only if $\mathcal{M}$ is bounded
on $L^p(w)$. Also, $w\in A_1$ if and only if $\mathcal{M}$ is bounded from $L^1(w)$ into $L^{1,\infty}(w)$.


\end{enumerate}
\end{proposition}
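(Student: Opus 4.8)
The plan is to assemble a handful of classical facts about Muckenhoupt weights, organized around two self-improvement phenomena, and I would begin with the elementary items. Parts (i) and (ii) follow directly from Jensen's and H\"older's inequalities: for $1\le p\le q<\infty$ one checks $\big(\dashint_B w^{1-q'}\big)^{q-1}\le \big(\dashint_B w^{1-p'}\big)^{p-1}$, which gives $A_p\subset A_q$ (the endpoint $A_1\subset A_p$ being handled analogously from $\mathcal M w\le Cw$), while the power-mean inequality $\big(\dashint_B w^{p}\big)^{1/p}\le \big(\dashint_B w^{q}\big)^{1/q}$ for $p\le q$, together with the trivial $RH_\infty$ bound, gives $RH_\infty\subset RH_q\subset RH_p$. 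Part (vi) is immediate from the symmetry of the $A_p$ functional under the substitution $\sigma:=w^{1-p'}$, $p\mapsto p'$, since $\sigma^{1-p}=w$ and $(p-1)(p'-1)=1$.

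Next I would recall Muckenhoupt's theorem, part (vii). Necessity of $A_p$ is obtained by testing the weak-type (or strong-type) inequality for $\mathcal M$ against $f=w^{1-p'}\chi_B$ and unwinding the resulting estimate; for $p=1$ one uses $f=\chi_E$ with $E\subset B$ and lets $E$ shrink to a Lebesgue point of $w$. For sufficiency, that $w\in A_p$ implies boundedness of $\mathcal M$ on $L^p(w)$, I would invoke the reverse H\"older inequality below to place $w$ in some $A_{p-\varepsilon}$ and then run a duality/good-$\lambda$ argument, or, alternatively, interpolate a weighted weak-type bound with a trivial $L^\infty$ estimate; the endpoint equivalence $w\in A_1\iff\mathcal M\colon L^1(w)\to L^{1,\infty}(w)$ follows from a weighted Vitali covering argument.

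The substantial content lies in the openness statements (iii) and (iv) together with the characterization (v), and a single tool drives all of them: the \emph{reverse H\"older inequality}, namely that every $w\in A_p$ --- indeed every $w\in A_\infty$ --- lies in $RH_{1+\varepsilon}$ for some $\varepsilon=\varepsilon(w)>0$. I would prove this by a Calder\'on-Zygmund stopping-time decomposition of the superlevel sets of $w$ at dyadic scales, using the $A_p$ (or $A_\infty$) condition to control $w$ on a cube by its average, and summing the resulting geometric series. Granting it, (iii) follows by applying the reverse H\"older inequality to $\sigma:=w^{1-p'}\in A_{p'}$, whose extra integrability places $w$ in $A_{p-\delta}$ for some small $\delta>0$; and (iv) is Gehring's lemma --- the reverse H\"older condition is itself open --- proved again by a Calder\'on-Zygmund decomposition, this time with an extra absorption step for the ``bad'' term. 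For (v), the inclusion $\bigcup_{1\le p<\infty}A_p\subset A_\infty$ is built into the definition (or is trivial from (i)), while $RH_s\subset A_\infty$ for $s>1$ follows from H\"older's inequality, which yields $w(E)/w(B)\lesssim(|E|/|B|)^{1/s'}$ for all balls $B$ and measurable $E\subset B$ --- one of the standard equivalent formulations of $A_\infty$; the reverse inclusions $A_\infty\subset\bigcup_p A_p$ and $A_\infty\subset\bigcup_s RH_s$ are once more the reverse H\"older inequality, which upgrades the $(|E|,|B|)$-comparison into membership in a concrete $A_p$ and a concrete $RH_s$.

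The main obstacle --- essentially the only nontrivial point --- is the reverse H\"older inequality for $A_p$/$A_\infty$ weights and the closely related Gehring lemma: both require the Calder\'on-Zygmund stopping-time machinery with careful bookkeeping to close the geometric sum, respectively to absorb the error term. Everything else reduces to manipulations with Jensen's and H\"older's inequalities; complete proofs can be found in \cite{Duo}, \cite{GCRF85}, or \cite{Grafakos}.
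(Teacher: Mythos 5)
The paper gives no proof of this proposition; it explicitly refers the reader to \cite{Duo}, \cite{GCRF85}, and \cite{Grafakos} for these classical facts. Your outline is a correct sketch of the standard arguments found in those references --- Jensen/H\"older for (i), (ii), and (vi), the Muckenhoupt testing and interpolation scheme for (vii), and the reverse H\"older inequality (via Calder\'on--Zygmund stopping time, with Gehring's absorption step for (iv)) driving (iii), (iv), and (v) --- and is consistent with the paper's intent of merely recording these facts.
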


\medskip

For a weight $w\in A_{\infty}$, define
\begin{align}\label{rw}
r_w:=\inf\{1\leq r<\infty : w\in A_{r}\},
\qquad
s_w:=\inf\{1\leq s<\infty : w\in RH_{s'}\}.
\end{align}
Notice that according to our definition $s_w$ is the conjugated exponent of the one defined in \cite[Lemma 4.1]{AuscherMartell:I}.
Given $0\le p_0<q_0\le \infty$, $w\in A_{\infty}$, and as in \cite[Lemma 4.1]{AuscherMartell:I}, we have
\begin{align}\label{intervalrs}
\mathcal{W}_w(p_0,q_0):=\left\{p : p_0<p<q_0, w\in A_{\frac{p}{p_0}}\cap RH_{\left(\frac{q_0}{p}\right)'}\right\}
=
\left(p_0r_w,\frac{q_0}{s_w}\right).
\end{align}
If $p_0=0$ and $q_0<\infty$ it is understood that the only condition that stays is $w\in RH_{\left(\frac{q_0}{p}\right)'}$. Analogously, if $0<p_0$ and $q_0=\infty$ the only assumption is $w\in A_{\frac{p}{p_0}}$. Finally $\mathcal{W}_w(0,\infty)=(0,\infty)$.

Our first goal is to study the boundedness of the square functions presented in \eqref{square-H-1}--\eqref{square-P-3} on weighted spaces $L^p(w)$ where $w\in A_\infty$. Our first result establishes the boundedness of the square functions associated with the Heat semigroup. Notice that when $w\equiv 1$, which corresponds to the unweighted case, this result recovers the estimates in the range $(p_-(L),\infty)$ obtained in \cite{AuscherHofmannMartell}.

\begin{theorem}\label{thm:SF-Heat}
Let $w\in A_{\infty}$.
\begin{list}{$(\theenumi)$}{\usecounter{enumi}\leftmargin=1cm \labelwidth=1cm\itemsep=0.2cm\topsep=.2cm \renewcommand{\theenumi}{\alph{enumi}}}

\item $\Scal_{\hh}$, $\Grm_{\hh}$, and  $\Gcal_{\hh}$ are bounded on  $L^p(w)$ for all $p\in \mathcal{W}_w(p_-(L),\infty)$.

\item Given $m\in\mathbb{N}$, $\Scal_{m,\hh}$, $\Grm_{m, \hh}$, and $\Gcal_{m, \hh}$ are bounded on $L^p(w)$ for all $p\in \mathcal{W}_w(p_-(L),\infty)$.
\end{list}
Equivalently, all the previous square functions are bounded on $L^p(w)$ for every $p_-(L)<p<\infty$ and every  $w\in A_{\frac{p}{p_-(L)}}$.
\end{theorem}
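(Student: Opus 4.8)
The plan is to reduce all the square functions in the statement to two model operators and then to obtain the weighted bounds for those by a single application of a limited-range extrapolation theorem, feeding it off-diagonal estimates valid on the whole interval $(p_-(L),\infty)$ rather than on the two halves $(p_-(L),2)$ and $(2,\infty)$ separately. The final assertion (``equivalently\dots'') is a mere rewriting: by \eqref{intervalrs} with $q_0=\infty$ one has $\mathcal{W}_w(p_-(L),\infty)=\big(p_-(L)\,r_w,\infty\big)$, and $p>p_-(L)\,r_w$ amounts to $w\in A_{p/p_-(L)}$. For the reduction, writing $t\,\partial_t\big[(t^2L)^m e^{-t^2L}\big]=2m\,(t^2L)^m e^{-t^2L}-2\,(t^2L)^{m+1}e^{-t^2L}$ and unwinding the definition of $\nabla_{y,t}$ gives the pointwise bound $\Gcal_{m,\hh}f\le \Grm_{m,\hh}f+C\,\big(\Scal_{m,\hh}f+\Scal_{m+1,\hh}f\big)$ (for $m=0$ the $t$-derivative of $e^{-t^2L}$ already carries a factor $t^2L$, so only $\Scal_\hh$ shows up). Combining this with the trivial inequality $\Grm_{m,\hh}f\le\Gcal_{m,\hh}f$ and with the comparison results for these square functions (pointwise domination of the higher-order operators together with the change-of-angle formulas), which reduce $\Scal_{m,\hh}$ and $\Grm_{m,\hh}$ to $\Scal_\hh$ and $\Grm_\hh$ in $L^p(w)$ with an admissible range depending only on $w$ and $p_-(L)$ and are established independently of the boundedness statements, it is enough to prove that $\Scal_\hh$ and $\Grm_\hh$ are bounded on $L^p(w)$ for every $p\in\mathcal{W}_w(p_-(L),\infty)$.

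Both $\Scal_\hh$ and $\Grm_\hh$ have the form $Qf(x)=\big(\iint_{\Gamma(x)}|\Theta_t f(y)|^2\,t^{-n-1}\,dy\,dt\big)^{1/2}$ with $\Theta_t\in\{(t^2L)e^{-t^2L},\,t\nabla_y e^{-t^2L}\}$; each is bounded on $L^2(\R^n)$ (the classical square function estimate, a consequence of the bounded holomorphic functional calculus of $L$ on $L^2$ and of the Kato estimate), and the families $\{\Theta_t\}$ enjoy $L^{p_0}$--$L^2$ off-diagonal (Gaffney) estimates for every $p_0$ with $p_-(L)<p_0\le 2$. I would then fix such a $p_0$ and verify the hypotheses of the extrapolation theorem of \cite[Theorem~4.9]{AuscherMartell:I} (equivalently \cite[Theorem~3.31]{CruzMartellPerez}), with lower exponent $p_0$ and upper exponent $\infty$: an off-diagonal estimate for $Q$ of the form
\[
\Big(\dashint_{B}\big(Q(f\chi_{C_j(B)})(x)\big)^2\,dx\Big)^{1/2}\le g(j)\,\Big(\dashint_{2^{j+1}B}|f|^{p_0}\,dx\Big)^{1/p_0},\qquad j\ge 2,
\]
for every ball $B$, where $C_j(B)=2^{j+1}B\setminus 2^jB$, together with the analogous local estimate over $4B$, and where $(g(j))_j$ decays fast enough to be summable against the doubling factor. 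Granting this, the theorem yields boundedness of $Q$ on $L^p(w)$ for every $p\in\mathcal{W}_w(p_0,\infty)$, and letting $p_0\downarrow p_-(L)$ covers the whole range.

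The verification of that off-diagonal estimate is where the real work lies, and I expect it to be the main obstacle. Using Fubini one rewrites $\dashint_B Q(f\chi_{C_j(B)})^2$ as a weighted integral of $|\Theta_t(f\chi_{C_j(B)})(y)|^2$ over the region $\{(y,t):B(y,t)\cap B\neq\emptyset\}$ and splits it according to the size of $t$. When $t$ is small compared with $2^j r$ ($r$ being the radius of $B$), the set $C_j(B)$ lies at distance $\gtrsim 2^j r$ from the relevant points $y$, so the Gaffney estimates supply a gain $e^{-c\,4^j r^2/t^2}$; integrating in $t$ --- here $p_0<2$ is what makes the $t$-integral converge near $t=0$ --- yields an exponentially small contribution. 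The genuine difficulty is the far field, i.e.\ $t\gtrsim 2^j r$ (and the borderline band $r\lesssim t\lesssim 2^j r$ with $y$ close to $C_j(B)$), where the Gaffney gain degenerates and a direct use of the $L^{p_0}$--$L^2$ bound only produces a factor that fails to be summable in $j$. This is precisely where the generalization of the Carleson measure condition announced in the introduction enters: instead of brute force, one recognizes the truncated conical integral over that far region as a weighted Carleson-type functional applied to $\Theta_t f$ and invokes a Carleson embedding in the generalized form, which is tailored to the exponent $p_0<2$ and has the correct scaling. Setting up this generalized Carleson/embedding estimate, and checking that it combines with the Gaffney decay to produce a summable $g(j)$, is the crux; the remaining bookkeeping over dyadic annuli and $t$-scales, and the treatment of the local piece over $4B$ via the unweighted $L^{p_0}$-boundedness of $Q$, are routine.
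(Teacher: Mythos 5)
Your reduction of the higher-order square functions to $\Scal_{\hh}$ and $\Grm_{\hh}$ (pointwise algebra for $\Gcal_{m,\hh}$ plus change-of-angle comparisons to peel off the extra powers of $t^2L$) is essentially the paper's treatment of part (b), and the unpacking of $\mathcal{W}_w(p_-(L),\infty)$ is correct. The gap is in how you propose to bound $\Scal_{\hh}$ and $\Grm_{\hh}$ themselves.

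The off-diagonal estimate you write for $Q$ --- a decaying $g(j)$ in front of $\big(\dashint_B Q(f\chi_{C_j(B)})^2\big)^{1/2}$ --- is simply false for the untruncated conical square function, and no Carleson embedding can repair it. For $t\gtrsim 2^j r_B$ the balls $B(x,t)$ ($x\in B$) overlap $C_j(B)$, so Gaffney only gives $\|\Theta_t(f\chi_{C_j(B)})\|_{L^2(B(x,t))}\lesssim t^{-n(1/p_0-1/2)}\|f\chi_{C_j(B)}\|_{L^{p_0}}$; integrating this over $t\ge 2^j r_B$ against $t^{-n-1}\,dt$ produces a contribution $\approx\big(\dashint_{2^{j+1}B}|f|^{p_0}\big)^{1/p_0}$, i.e.\ $O(1)$ in $j$ with nothing left to sum. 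Also, [Theorem 4.9, AuscherMartell:I] is Rubio de Francia extrapolation, not a CZ criterion; the criteria you want, like [Theorem 2.4, AuscherMartell:III], require composing $Q$ with $(I-e^{-r_B^2 L})^M$ --- it is this cancellation that supplies the extra $(r_B/t)^{2M}$ killing the far field --- and they yield a weak-$(p_0,p_0)$ bound, hence $L^p$ for $p_0<p\le 2$ only; the paper uses that route solely for the \emph{unweighted} $L^{p_0}$ input (Proposition \ref{prop:Sh}).

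The paper's weighted argument does not decompose $\mathcal{A}F$ at all. It replaces it by the truncated Carleson functional $\mathcal{C}_{p_0}$ from \eqref{CF}, whose $t$-integral stops at $r_B$, and proves two independent facts: (i) a good-$\lambda$ inequality, Proposition \ref{prop:maximal}(a), giving $\|\mathcal{A}F\|_{L^p(w)}\lesssim\|\mathcal{C}_{p_0}F\|_{L^p(w)}$ for every $0<p<\infty$ and $w\in A_\infty$, where the far field is handled by the Coifman--Meyer--Stein aperture trick (picking $x_j\notin O_\lambda$), not by Gaffney; and (ii) the pointwise bound $\widetilde{\mathcal{C}}_{p_0}f\lesssim\mathcal{M}_{p_0}f$ (Proposition \ref{prop:Cpo-Mp0-S-heat}), where the built-in truncation $t\le r_B$ forces $d(2B,C_j(B))\approx 2^j r_B\gg t$ for all relevant $t$, so the Gaffney decay is summable in $j$. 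Ingredient (i) is the piece genuinely missing from your plan, and without it you cannot reach $p>2$: $\widetilde{\mathcal{A}}$, unlike $\widetilde{\mathcal{C}}_{p_0}$, is not pointwise controlled by $\mathcal{M}_{p_0}$.
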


The proof of Theorem \ref{thm:SF-Heat} is split into two steps. First, we prove $(a)$ (in doing that we only need to consider $\Scal_{\hh}$ and  $\Gcal_{\hh}$ since $G_{\hh}f\le \Gcal_{\hh}f$). Second, we shall show that the square functions in $(b)$ are all controlled by $\Scal_{\hh}$ in $L^p(w)$ for every $w\in A_\infty$ and $0<p<\infty$ (see Theorem \ref{theor:control-SF-Heat}). Gathering this and $(a)$, the proof of $(b)$ will be complete.

\medskip

Our second result deals with the boundedness of the square functions related to the Poisson semigroup. Here the formulation is more involved
since the ranges where these square functions are bounded, not only depend on $p_-(L)$ and the weight, but also on $p_+(L)$ and the parameter $K$.
We also notice that when $w\equiv 1$ we recover the estimates obtained in \cite{AuscherHofmannMartell}.

\begin{theorem}\label{thm:SF-Poisson}
Let $w\in A_{\infty}$.
\begin{list}{$(\theenumi)$}{\usecounter{enumi}\leftmargin=1cm \labelwidth=1cm\itemsep=0.2cm\topsep=.2cm \renewcommand{\theenumi}{\alph{enumi}}}

\item Given $K\in\mathbb{N}$, $\Scal_{K,\pp}$ is bounded on $L^p(w)$ for all $p\in \mathcal{W}_w(p_-(L),p_+(L)^{K,*})$.

\item Given $K\in\mathbb{N}_0$,  $\Gcal_{K,\pp}$ and $\Grm_{K,\pp}$ are bounded on $L^p(w)$ for all $p\in \mathcal{W}_w(p_-(L),p_+(L)^{K,*})$.
\end{list}
\end{theorem}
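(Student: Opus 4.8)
The plan is to reduce Theorem~\ref{thm:SF-Poisson} to Theorem~\ref{thm:SF-Heat} by subordinating the Poisson semigroup to the Heat semigroup. Since $|\nabla_y u|\le|\nabla_{y,t}u|$ pointwise, we have $\Grm_{K,\pp}f\le\Gcal_{K,\pp}f$, so it suffices to bound $\Scal_{K,\pp}$ and $\Gcal_{K,\pp}$. The starting point is the subordination identity
\begin{equation*}
e^{-t\sqrt{L}}=\frac{t}{2\sqrt{\pi}}\int_0^{\infty}\frac{e^{-t^2/(4s)}}{s^{3/2}}\,e^{-sL}\,ds,
\end{equation*}
which, after differentiating in $t$ (via $t\sqrt{L}\,e^{-t\sqrt{L}}=-t\,\partial_t e^{-t\sqrt{L}}$), inserting a Calder\'on reproducing factor $(sL)^{N}e^{-sL}$ with $N$ large, and rescaling $s=\sigma^2 t^2$, yields a representation of the form
\begin{equation*}
(t\sqrt{L})^{2K}e^{-t\sqrt{L}}f(y)=\int_0^{\infty}\phi_{K}(\sigma)\,\big(\sigma^2 t^2 L\big)^{N}e^{-\sigma^2 t^2 L}f(y)\,\frac{d\sigma}{\sigma},
\end{equation*}
with a kernel $\phi_K$ that decays faster than any power of $\sigma$ as $\sigma\to0$ and like $\sigma^{-(2K+1)}$ as $\sigma\to\infty$, the decay at infinity improving with $K$ because each $t$-derivative kills one more term of the slow Poisson tail. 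The analogous identity for $t\nabla_{y,t}(t\sqrt{L})^{2K}e^{-t\sqrt{L}}$ carries $\sqrt{s}\,\nabla_{y,t}(\sigma^2t^2L)^{N}e^{-\sigma^2t^2L}$ inside, i.e.\ the Poisson gradient operator is a superposition, over scales $\sigma t$, of Heat gradient operators.

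Next I would insert these identities into the conical integrals defining $\Scal_{K,\pp}f(x)$ and $\Gcal_{K,\pp}f(x)$, apply Minkowski's integral inequality in $L^2(\Gamma(x),t^{-n-1}dy\,dt)$ to pull the $\sigma$-integral out, and then change variables $t\mapsto\sigma t$, which turns $\Gamma(x)$ into the cone with vertex $x$ and aperture $1/\sigma$. This produces pointwise estimates of the schematic form
\begin{equation*}
\Gcal_{K,\pp}f(x)\lesssim\int_0^{\infty}\Phi_K(\sigma)\,\mathcal{A}^{1/\sigma}f(x)\,\frac{d\sigma}{\sigma},
\end{equation*}
and similarly for $\Scal_{K,\pp}$, where $\mathcal{A}^{\alpha}$ denotes a Heat(-gradient) conical square function with cone aperture $\alpha$ and $\Phi_K$ still decays fast at $0$ and polynomially at $\infty$. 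Taking $L^p(w)$-norms and using Minkowski again, everything is reduced to two inputs: the \emph{weighted change-of-angle formulas}, which compare $\|\mathcal{A}^{\alpha}f\|_{L^p(w)}$ with $\|\mathcal{A}^{1}f\|_{L^p(w)}$ up to a power of $\alpha$ that is controlled (with exponent depending on $n$, $p$ and on $r_w,s_w$) precisely when $p$ lies in the announced interval; and the $L^p(w)$-boundedness of $\mathcal{A}^{1}$ itself, which will follow from Theorem~\ref{thm:SF-Heat} (recall $\mathcal{W}_w(p_-(L),p_+(L)^{K,*})\subset\mathcal{W}_w(p_-(L),\infty)$) or from the companion comparison estimate controlling the Heat gradient square functions by $\Scal_{\hh}$.

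The point at which $p_+(L)$ and the exponent $p_+(L)^{K,*}$ enter is the handling of the gradient. When the gradient falls on the rescaled Heat operator one must trade it, by a Caccioppoli/interior-regularity argument (equivalently, by the $L^p(w)$--$L^q(w)$ off-diagonal estimates for $\nabla e^{-sL}$, see \cite{Auscher,AuscherMartell:II}), for non-gradient Heat operators at comparable scales; this trade is available only on exponents in $(p_-(L),p_+(L))$, and the $2K$ extra smoothing factors $(t\sqrt{L})^{2K}$ supply exactly $2K+1$ Sobolev orders, which dilate the upper endpoint $p_+(L)$ to $p_+(L)^{K,*}$. Carrying this out allows one to dominate, for $p\in\mathcal{W}_w(p_-(L),p_+(L)^{K,*})$, the gradient Poisson square functions in $L^p(w)$ by $\Scal_{\hh}$ (the summability in $\sigma$ being guaranteed by the fast decay of $\Phi_K$ together with the change-of-angle exponents), and then Theorem~\ref{thm:SF-Heat}(a) finishes part~(b). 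For $\Scal_{K,\pp}$ in part~(a) one argues in the same way, now using in addition the divergence structure $L=-\div(A\nabla)$ and ellipticity to integrate by parts in the tent integral and bring the gradient into play, so that the same range $\mathcal{W}_w(p_-(L),p_+(L)^{K,*})$ appears; alternatively one compares $\Scal_{K,\pp}$ directly with the gradient Poisson square functions of part~(b).

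The main obstacle is making the previous paragraph quantitative with the \emph{sharp} range and weight class. This requires, as in the proof of Theorem~\ref{thm:SF-Heat} and following the philosophy of \cite{AuscherMartell:I,AuscherMartell:III}, working with the whole interval $(p_-(L),p_+(L))$ — and with its Sobolev dilate — at once, never splitting at $p=2$, so that the extrapolation/tent-space bookkeeping produces the class $A_{p/p_-(L)}\cap RH_{(p_+(L)^{K,*}/p)'}$ rather than a distorted one; it also requires the weighted change-of-angle estimates to be proved with their optimal exponents (so that the $\sigma$-integral converges up to the endpoint) and a careful treatment near $p=p_+(L)^{K,*}$, where the Sobolev embedding used in the gradient trade is critical. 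I expect this sharp bookkeeping — rather than the subordination or the reduction to the Heat case — to be the technical heart of the proof.
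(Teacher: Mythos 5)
Your global architecture is right: reduce everything to the Heat square functions via subordination, change of angle, and then quote Theorem~\ref{thm:SF-Heat}$(a)$; this is indeed how the paper proceeds (Theorems~\ref{theor:control-SF-Heat}--\ref{theor:control-SF-Poisson} followed by a two-line reduction). But the mechanism you propose for why $p_+(L)^{K,*}$ appears is not the one in the paper, and I do not think it would work as stated.

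You attribute the upper-endpoint restriction to ``handling the gradient'' --- a Caccioppoli/interior-regularity trade of $\nabla e^{-sL}$ against $e^{-sL}$ that would bring in Sobolev orders --- and for $\Scal_{K,\pp}$ you propose to ``integrate by parts to bring the gradient into play'' so that the same restriction appears. This cannot be the reason: the restriction $p<p_+(L)^{K,*}/s_w$ already bites for $\Scal_{K,\pp}$, a square function with no gradient at all, and the paper handles it without ever introducing one. In the paper's proof of Theorem~\ref{theor:control-SF-Poisson}$(b)$ the upper endpoint arises from a purely semigroup-smoothing balance: after subordination and Minkowski one has a $u$-integral of Heat square functions at scale $t/(2\sqrt u)$, and for $u<1/4$ (large aperture) one needs to beat the change-of-angle loss. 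This is done by Jensen with exponent $\widetilde q$, then Proposition~\ref{prop:Q} (the $RH_{s'}$ change-of-angle for the inner average), then the $L^2\!\to\!L^{\widetilde q}$ off-diagonal estimates of the Heat semigroup, which are available only for $\widetilde q<p_+(L)$. The convergence of the $u$-integral then requires $K+\tfrac12+\tfrac{n}{4s}-\tfrac{n}{2\widetilde q}>0$, and optimizing $(\widetilde q,s)$ subject to $\widetilde q<p_+(L)$, $s\geq\widetilde q/2$, $w\in RH_{(2s/p)'}$ is exactly what produces $\mathcal{W}_w(0,p_+(L)^{K,*})$. The formula for $p_+(L)^{K,*}$ looks Sobolev-like, but it comes out of this exponent bookkeeping, not from a Sobolev embedding or Caccioppoli.

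For the gradient square functions $\Gcal_{K,\pp}$ the paper does \emph{not} trade the gradient at all: it uses the pointwise decomposition of \cite[Lemma~3.5]{AuscherHofmannMartell}, which writes $\Gcal_{K,\pp}$ (up to apertures) as $\Scal_{K,\hh}$ plus $\Gcal_{K,\hh}$ plus a gradient-free difference term $(t^2L)^K\big(e^{-t\sqrt L}-e^{-t^2L}\big)$. The first two contribute on all of $\mathcal{W}_w(p_-(L),\infty)$ by Theorems~\ref{theor:control-SF-Heat} and~\ref{thm:SF-Heat}$(a)$, so the gradient piece imposes no restriction; the whole restriction to $\mathcal{W}_w(p_-(L),p_+(L)^{K,*})$ comes from the difference term, which is estimated by $\Scal_{K+1,\hh}$ via the same subordination/Jensen/Proposition~\ref{prop:Q} scheme as above. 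So your plan needs to be reorganized: do not integrate by parts for $\Scal_{K,\pp}$, do not try to trade the Poisson gradient against non-gradient operators, and make the $L^2\!\to\!L^{\widetilde q}$ Heat smoothing (not the gradient off-diagonal estimates) the source of the $p_+(L)$-dependence.
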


The proof of this result is as follows. We shall first show that each square function in $(a)$ and $(b)$ can be controlled by either $\Scal_{\hh}$ or $\Gcal_{\hh}$ in $L^p(w)$ for every $w\in A_\infty$ and $p\in \mathcal{W}_w(0, p_+(L)^{K,*})$  (see Theorem \ref{theor:control-SF-Poisson}). This, in concert with $(a)$ in Theorem \ref{thm:SF-Heat}, will easily lead to the desired estimates.

\medskip

We present the two promised results containing the control of the previous square functions by $\Scal_{\hh}$ and  $\Gcal_{\hh}$. In the first result we deal with the square functions defined in terms of the Heat semigroup.
\begin{theorem}\label{theor:control-SF-Heat}
Given an arbitrary $f\in L^2(\mathbb{R}^n)$ there hold:
\begin{list}{$(\theenumi)$}{\usecounter{enumi}\leftmargin=1cm \labelwidth=1cm\itemsep=0.2cm\topsep=.2cm \renewcommand{\theenumi}{\alph{enumi}}}

\item $\Grm_{m,\hh}f(x)\le \Gcal_{m, \hh}f(x)$, for every $x\in\mathbb{R}^n $ and  for all $m\in\mathbb{N}_0$.

\item Given $m\in\mathbb{N}$,  $\displaystyle \|\Scal_{m,\hh}f\|_{L^p(w)}\lesssim \|\Scal_{\hh}f\|_{L^p(w)}$, for all $w\in A_\infty$ and $0<p<\infty$.

\item Given $m\in\mathbb{N}$,  $\displaystyle \|\Gcal_{m,\hh}f\|_{L^p(w)}\lesssim \|\Scal_{\hh}f\|_{L^p(w)}$, for all $w\in A_\infty$ and $0<p<\infty$.
\end{list}
\end{theorem}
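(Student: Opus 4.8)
The plan is to reduce everything to a single elliptic-operator tool—Gaffney/off-diagonal estimates for the Heat semigroup and its time-derivatives—combined with tent-space interpolation and the generalized Carleson measure estimate advertised in the abstract.

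Part $(a)$ is immediate and essentially a pointwise triviality: since $\nabla_{y,t} = (\nabla_y, \partial_t)$, the vector $t\nabla_y(t^2L)^m e^{-t^2L}f(y)$ is a sub-block of $t\nabla_{y,t}(t^2L)^m e^{-t^2L}f(y)$, so $|t\nabla_y(t^2L)^m e^{-t^2L}f(y)| \le |t\nabla_{y,t}(t^2L)^m e^{-t^2L}f(y)|$ for every $(y,t)$, and integrating over $\Gamma(x)$ against $t^{-n-1}\,dy\,dt$ gives $\Grm_{m,\hh}f(x) \le \Gcal_{m,\hh}f(x)$.

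For $(b)$, the starting observation is the algebraic identity $(t^2L)^m e^{-t^2L} = c_m\,(t^2L)^m e^{-t^2L/2}\cdot e^{-t^2L/2}$ together with the fact that $(t^2L)^m e^{-t^2L/2}$ is, up to a harmless rescaling of $t$, exactly the operator appearing in $\Scal_{1,\hh}$ composed $m$ times with bounded analytic-semigroup factors; more precisely one writes $(t^2L)^m e^{-t^2L}=\varphi_m(t^2L)$ with $\varphi_m(z)=z^m e^{-z}$, and the key point is that $\varphi_m$ vanishes to order $\ge 1$ at $0$, so one can factor $\varphi_m(z)=\psi(z)\,\eta_m(z)$ with $\psi(z)=z e^{-z}$ (the symbol of $\Scal_{1,\hh}=\Scal_{\hh}$) and $\eta_m$ holomorphic and bounded with all the decay one needs on the relevant sector. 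The standard move (à la Auscher--Hofmann--Martell) is then a change-of-angle / averaging argument at the level of tent spaces: one shows $\|\Scal_{m,\hh}f\|_{L^p(w)}$ is controlled by a cone integral of $f$ through the subordination formula, reducing matters to the $L^p(w)$-boundedness of the operator $f \mapsto \big(\iint_{\Gamma(x)}|\eta_m(t^2L)\,g(y,t)|^2\,\tfrac{dy\,dt}{t^{n+1}}\big)^{1/2}$ acting on the "tent extension" $g(y,t)=\psi(t^2L)f(y)$, and this last boundedness follows from off-diagonal estimates for $\eta_m(t^2L)$ plus the Fefferman--Stein vector-valued maximal inequality (here one uses $w\in A_\infty$ so that $\mathcal{M}$ or its iterate $\mathcal{M}_r$ with $r<1$ close to $1$ is available on $L^p(w)$ for the full range $0<p<\infty$ after the standard $L^p$-for-$p<1$ trick via vector-valued extrapolation in tent spaces). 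Item $(c)$ is handled in exactly the same way, with the only extra ingredient being the $L^2$ off-diagonal bounds for $t\nabla_{y,t}(t^2L)^m e^{-t^2L}$—i.e. Gaffney estimates for the gradient of the semigroup, which hold on the interval $(p_-(L),p_+(L))\ni 2$ and in particular in $L^2$—so that $\Gcal_{m,\hh}$ is dominated by the same tent-space square function of $\psi(t^2L)f$.

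The main obstacle is the range $0<p<1$: for such $p$ the Hardy--Littlewood maximal function and the usual tent-space duality are unavailable, and one cannot directly interpolate or use $A_p$ theory. The device I expect to invoke—consistent with the paper's stated emphasis on "a generalization of the Carleson measure condition which is more natural when estimating the square functions below $p=2$"—is to prove a good-$\lambda$ or, better, a direct $L^p(w)$ bound for all $0<p<\infty$ by testing against $A_\infty$-weighted tent-space atoms: decompose $\{\Scal_{\hh}f > \lambda\}$ into a maximal family of dyadic cubes, and on each cube use the off-diagonal decay of $\eta_m(t^2L)$ (resp. the gradient bounds for $(c)$) to transfer the Carleson mass of $|\Scal_{m,\hh}f|^2$ (resp. $|\Gcal_{m,\hh}f|^2$) over the tent above the cube to the Carleson mass of $|\Scal_{\hh}f|^2$ over a dilated tent, summing the geometric series in the annular decomposition. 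This makes the estimate $L^p$-free: it is a pointwise-in-$\lambda$ comparison of level sets up to $A_\infty$-controlled constants, and then integrating in $\lambda$ against $d w$ yields the claim for every $0<p<\infty$ and every $w\in A_\infty$ simultaneously, which is precisely what is asserted in $(b)$ and $(c)$.
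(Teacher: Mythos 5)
Part $(a)$ matches the paper exactly. For $(b)$--$(c)$ you have the right germ of an idea---peel off one factor $t^2Le^{-t^2L}$ (your $\psi(t^2L)$) and control the remaining family $\{\eta_m(t^2L)\}_{t>0}$ using $L^2$--$L^2$ off-diagonal estimates---but the machinery you propose to finish the argument is not what the paper uses, and as written it has a genuine gap. First, the subordination formula does not enter at all in the Heat case: subordination is the device for the Poisson square functions in Theorem \ref{theor:control-SF-Poisson}, so your appeal to it here is misplaced. Second, and more substantively, you assert that after the factorization the required tent-space boundedness follows from ``off-diagonal estimates plus the Fefferman--Stein vector-valued maximal inequality.'' That combination does not give you all of $0<p<\infty$ and all of $w\in A_\infty$, and you acknowledge the problem for $p<1$; but your proposed repair---a good-$\lambda$ argument ``testing against $A_\infty$-weighted tent-space atoms''---is left entirely schematic and would need to be built from scratch. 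The generalized Carleson functional $\mathcal{C}_{p_0}$ you invoke from the abstract is in fact used in this paper only for the \emph{boundedness} result (Theorem \ref{thm:SF-Heat}$(a)$, via Proposition \ref{prop:maximal}), not for the \emph{comparison} result you are proving.

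What the paper actually does is simpler and already covers the whole range. After writing $(t^2L)^m e^{-t^2L}$ as $T_{t^2/2}\circ B_{t^2/2}$ with $T_{t^2/2}=(\tfrac{t^2}{2}L)^{m-1}e^{-\tfrac{t^2}{2}L}$ and $B_{t^2/2}=\tfrac{t^2}{2}Le^{-\tfrac{t^2}{2}L}$, one performs the annular decomposition $C_j(B(x,t))$ in the spatial argument of $B_{t^2/2}f$, applies the exponential $L^2$--$L^2$ off-diagonal bound for $T_{t^2/2}$ to extract $e^{-c4^j}$, and then pays the price of replacing the cone of aperture $1$ by one of aperture $\sim 2^j$ using the weighted change-of-angle Proposition \ref{prop:alpha}. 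That proposition gives a factor $2^{jnr/p}$ for any $w\in A_r$ with $p\le 2r$, and since every $w\in A_\infty$ admits such an $r$ for every $0<p<\infty$, the geometric series converges by the exponential decay alone. The same scheme with $A_{t^2/2}=\tfrac{t}{\sqrt 2}\nabla_{y,t}e^{-\tfrac{t^2}{2}L}$ in place of $T_{t^2/2}$ gives $(c)$, first reducing $\Gcal_{m,\hh}$ to $\Scal_{m,\hh}$ and then invoking $(b)$. In short: the change-of-angle Proposition \ref{prop:alpha}, applied after an annular decomposition and off-diagonal decay, is the entire engine; it is valid for all $p>0$ and $w\in A_\infty$ directly, so no good-$\lambda$, no vector-valued maximal inequality, and no atomic decomposition is needed. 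To make your proof correct you should drop the Fefferman--Stein and good-$\lambda$ steps and replace them with this annular-decomposition-plus-change-of-angle argument.
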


Finally, the following result establishes the control of the square functions associated with the Poisson semigroup.

\begin{theorem}\label{theor:control-SF-Poisson} Given an arbitrary $f\in L^2(\mathbb{R}^n)$ there hold:
\begin{list}{$(\theenumi)$}{\usecounter{enumi}\leftmargin=1cm \labelwidth=1cm\itemsep=0.2cm\topsep=.2cm \renewcommand{\theenumi}{\alph{enumi}}}

\item $\Grm_{K,\pp}f(x)\le \Gcal_{K, \pp}f(x)$, for every $x\in\mathbb{R}^n $ and  for all $K\in\mathbb{N}_0$.

\item Given $K\in\mathbb{N}$, $\displaystyle \|\Scal_{K,\pp}f\|_{L^p(w)}\lesssim \|\Scal_{\hh}f\|_{L^p(w)}$, for all $w\in A_\infty$ and $p\in\mathcal{W}_w(0,p_+(L)^{K,*})$.

\item $\displaystyle \|\Gcal_{\pp}f\|_{L^p(w)}\lesssim \|\Gcal_{\hh}f\|_{L^p(w)}$, for all $w\in A_\infty$ and $w\in\mathcal{W}_w(0,p_+(L)^{*})$.

\item Given $K\in\mathbb{N}$, $\displaystyle \|\Gcal_{K,\pp}f\|_{L^p(w)}\lesssim \|\Scal_{\hh}f\|_{L^p(w)}$, for all $w\in A_\infty$ and $p\in\mathcal{W}_w(0,p_+(L)^{K,*})$.
\end{list}
\end{theorem}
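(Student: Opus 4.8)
Part $(a)$ is immediate from $|t\nabla_y u|\le|t\nabla_{y,t}u|$. For $(b)$, $(c)$ and $(d)$ the plan is to control each Poisson conical square function in $L^p(w)$ by the \emph{heat} conical square functions $\Scal_{K,\hh}$, $\Grm_{K,\hh}$, $\Gcal_{K,\hh}$, and then close the estimate using Theorem~\ref{theor:control-SF-Heat}. The bridge between the two semigroups is the subordination identity
\[
e^{-t\sqrt{L}}=\frac{1}{\sqrt{\pi}}\int_0^\infty \frac{t}{s^{2}}\,e^{-t^{2}/(4s^{2})}\,e^{-s^{2}L}\,ds,
\]
together with $\partial_t e^{-t\sqrt L}=-\sqrt{L}\,e^{-t\sqrt L}$. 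Differentiating in $t$, applying $(t^{2}L)^{K}$, and---for the components involving $\partial_t$---integrating by parts in the variable $s$ to trade $t$-derivatives for powers of $s^{2}L$, one rewrites every operator occurring in $\Scal_{K,\pp}f$, $\Grm_{K,\pp}f$, $\Gcal_{K,\pp}f$ as a Mellin-type superposition
\[
\int_0^\infty \Phi\Big(\frac{t}{s}\Big)\,\Theta_s f\,\frac{ds}{s},
\]
where $\Theta_s$ is one of the heat operators $(s^{2}L)^{K}e^{-s^{2}L}$ or $s\nabla_y(s^{2}L)^{K}e^{-s^{2}L}$, and $\Phi$ is a fixed function that decays faster than any polynomial as $t/s\to\infty$ and vanishes at $t/s=0$ to order exactly $2K+1$ for the $\Scal$- and $\partial_t$-components (to order $2K+2$ for the spatial-gradient component).

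The next step is to turn the norm inequalities into tent-space estimates. Inserting the superposition into the conical square function, applying Minkowski's integral inequality (or Cauchy--Schwarz within dyadic blocks in $s$), and splitting the $s$-integral into $s\le t$ and $s>t$ (decomposed dyadically), two regimes appear. When $s\le t$ the factor $\Phi(t/s)$ decays super-polynomially; pairing $(y,t)\in\Gamma(x)$ with a scale $s\le t$ places $(y,s)$ in a cone of aperture $t/s\ge 1$, and the sharp weighted change-of-angle formulas established earlier cost only a fixed power of $t/s$ in $L^p(w)$, summable against that decay, so this regime is dominated in $L^p(w)$---for every $w\in A_\infty$ and every $0<p<\infty$---by $\Scal_{K,\hh}f$ (resp.\ $\Grm_{K,\hh}f$) at aperture one. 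When $s>t$ the factor $\Phi(t/s)$ decays only like $(t/s)^{2K+1}$ (or $(t/s)^{2K+2}$) while $(y,s)$ lies in a cone of aperture $<1$; here the estimate must be extracted from the $2K+1$ orders of vanishing of $\Phi$ at $0$, regarded as a genuine smoothing, by means of the $L^p(w)$--$L^q(w)$ off-diagonal estimates for the heat semigroup---available precisely when the exponents lie in $(p_-(L),p_+(L))$---upgraded by a Sobolev-type gain of $2K+1$ derivatives; this is exactly the step that produces the upper endpoint $p_+(L)^{K,*}$. The generalized Carleson-measure estimates developed in the paper are used here so that the tent-space bound stays valid below $p=2$ without splitting the argument at $p=2$ and thereby distorting the class of weights.

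Gathering the two regimes, for $p$ in the asserted range each of $\Scal_{K,\pp}f$, $\Grm_{K,\pp}f$, $\Gcal_{K,\pp}f$ is bounded in $L^p(w)$ by $\|\Scal_{K,\hh}f\|_{L^p(w)}+\|\Gcal_{K,\hh}f\|_{L^p(w)}$. For $(b)$ and $(d)$ (where $K\in\mathbb{N}$), Theorem~\ref{theor:control-SF-Heat}$(b)$,$(c)$ gives $\|\Scal_{K,\hh}f\|_{L^p(w)}+\|\Gcal_{K,\hh}f\|_{L^p(w)}\lesssim\|\Scal_{\hh}f\|_{L^p(w)}$ for all $w\in A_\infty$ and $0<p<\infty$, which closes both statements. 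For $(c)$ (the case $K=0$) one uses instead that the $\partial_t$-component is dominated by $\Scal_{1,\hh}=\Scal_{\hh}\lesssim\Gcal_{\hh}$ and that the spatial-gradient component reduces to $\Grm_{0,\hh}\le\Gcal_{0,\hh}=\Gcal_{\hh}$ by Theorem~\ref{theor:control-SF-Heat}$(a)$, so everything collapses onto $\Gcal_{\hh}$ in the range $\mathcal{W}_w(0,p_+(L)^{*})$.

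The hard part is the regime $s>t$. There the subordination kernel has only polynomial decay, so a crude pointwise domination of $\Scal_{K,\pp}f$ by $\Scal_{K,\hh}f$ fails once $2K+1<n$---reconstructing the heat cone over-counts the region near the vertex---and one genuinely has to convert the $2K+1$ orders of cancellation of $\Phi$ at $0$ into Sobolev-type smoothing via $L^p(w)$--$L^q(w)$ heat-semigroup estimates; this is what both produces the sharp endpoint $p_+(L)^{K,*}$ and forces the use of the generalized Carleson-measure condition below $p=2$. Keeping the change-of-angle losses summable, with the sharp power of the aperture ratio, is the remaining delicate bookkeeping.
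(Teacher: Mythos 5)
Your outline correctly starts from subordination and splits into the regime where the heat time exceeds $t$ and the regime where it is less than $t$, which agrees with the paper's decomposition into small-$u$ and large-$u$ parts of the subordination integral. But there are two substantial problems. First, the generalized Carleson-measure condition plays no role in the proof of this theorem: the maximal operator $\mathcal{C}_{p_0}$ is only needed for Theorem~\ref{thm:SF-Heat} part $(a)$ (boundedness of the heat conical square functions). The comparison estimates here rest on the subordination formula together with the change-of-angle estimates (Propositions~\ref{prop:alpha} and~\ref{prop:Q}) and the extrapolation Lemma~\ref{lemma:extrapol}, not on $\mathcal{C}_{p_0}$. Second and more seriously, the $s>t$ regime (small $u$), which is where the endpoint $p_+(L)^{K,*}$ and the restriction $p\in\mathcal{W}_w(0,p_+(L)^{K,*})$ come from, is only asserted: you say the $2K+1$ orders of vanishing of the subordination kernel should be ``regarded as a genuine smoothing, by means of the $L^p(w)$--$L^q(w)$ off-diagonal estimates\ldots upgraded by a Sobolev-type gain.'' This does not constitute an estimate. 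The paper achieves convergence of the $u$-integral by (i) applying Jensen's inequality to raise the inner $y$-integral to an $L^{\widetilde q}$-average for an auxiliary $2<\widetilde q<p_+(L)$, (ii) applying Proposition~\ref{prop:Q} to shrink the ball $B(x,\alpha t)$ with $\alpha=2\sqrt{u}$ in an $L^2(w_0)$ estimate for $w_0\in RH_{s'}$, (iii) invoking $L^2$--$L^{\widetilde q}$ off-diagonal estimates for the heat semigroup, (iv) extrapolating in the reverse-H\"older scale to the given $p$ and $w$, and then (v) checking, via a two-case analysis on whether $(2K+1)p_+(L)\ge n$ or not, that the resulting power $K+\frac12+\frac{n}{4s}-\frac{n}{2\widetilde q}$ of $u$ is strictly positive \emph{precisely when} $p\in\mathcal{W}_w(0,p_+(L)^{K,*})$. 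Without this chain one cannot see how the endpoint arises, and a generic ``Sobolev gain'' heuristic would suggest a different and wrong threshold.

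A further gap concerns parts $(c)$ and $(d)$. You propose to write $\Gcal_{K,\pp}$ directly as a Mellin superposition $\int_0^\infty \Phi(t/s)\Theta_s f\,\frac{ds}{s}$ of heat operators by differentiating the subordination formula in $t$ and integrating by parts in $s$, then asserting specific vanishing orders of $\Phi$ at $0$. This is possibly viable but is not what the paper does and is unverified: the $\partial_t$-derivative introduces extra terms whose cancellation/decay must be checked. The paper instead invokes the pointwise bound of \cite[Lemma 3.5]{AuscherHofmannMartell}, which dominates $\Gcal_{K,\pp}f$ by three explicit terms---a heat $\Scal$-type term, a heat $\Gcal$-type term, and a term $\mathfrak{G}_{K,\pp}f$ built from $(t^2L)^K(e^{-t\sqrt L}-e^{-t^2L})$---and then handles $\mathfrak{G}_{K,\pp}f$ by the identity $|(e^{-\frac{t^2}{4u}L}-e^{-t^2L})f|\lesssim\int_{\min(t,t/(2\sqrt u))}^{\max(t,t/(2\sqrt u))}|r^2Le^{-r^2L}f|\,\frac{dr}{r}$ together with the same circle of tools as in $(b)$. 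If you wish to pursue your integration-by-parts scheme you must at least produce the kernel $\Phi$ explicitly, verify its stated vanishing orders, and check that the resulting heat operators satisfy the needed $L^2$--$L^{\widetilde q}$ off-diagonal bounds.
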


Let us observe that in $(b)$ and $(d)$ (and also $(c)$ with $K=0$), if $(2K+1)\,p_+(L)\ge n$ the corresponding estimates hold for every $w\in A_\infty$ and every $0<p<\infty$. Otherwise, if $(2K+1)\,p_+(L)< n$, each corresponding estimate holds for all $0<p<p_+(L)^{K,*}$ and $w\in RH_{(p_+(L)^{K,*}/p)'}$.

\medskip

The organization of the paper is as follows. In Section \ref{section:OD} we recall the off-diagonal estimates satisfied by the Heat and Poisson semigroups, as well as by the other related objects that define the square functions under study. In Section \ref{section:TSpaces}, we consider weighted estimates in the tent spaces introduced and developed in \cite{CoifmanMeyerStein}. Crucial to us are the change-of-angle formulas which are very useful for comparing square functions in weighted Lebesgue spaces with cones having different apertures and with a precise control of the variation of the angle. Another important tool is the introduction of a modified version of the Carleson measure condition suited to deal with estimates on $L^p$, for $p<2$. As explained above, this will be crucial when obtaining weighted estimates without splitting the argument into $p<2$ and $p>2$ as previously done in \cite{AuscherHofmannMartell}.
Finally, in Section \ref{section:main-results-SF} we prove our main results: Theorems \ref{thm:SF-Heat}, \ref{thm:SF-Poisson}, \ref{theor:control-SF-Heat}, and \ref{theor:control-SF-Poisson}.

\section{Off-diagonal estimates}\label{section:OD}

We briefly recall the notion of off-diagonal estimates. Let $\{T_t\}_{t>0}$ be a family of linear operators
and let $1\le p\leq q\le \infty$. We say that $\{T_t\}_{t>0}$ satisfies $L^p-L^q$ off-diagonal estimates of exponential type, denoted by $\{T_t\}_{t>0}\in \mathcal{F}_\infty(L^p\rightarrow L^q)$, if for all closed sets $E$, $F$, all $f$, and all $t>0$ we have
$$
\|T_{t}(f\,\chi_E)\,\chi_F\|_{L^q(\mathbb{R}^n)}
\leq
Ct^{-n\left(\frac{1}{p}-\frac{1}{q}\right)}e^{-c\frac{d(E,F)^2}{t^2}}\|f\,\chi_E\|_{L^p(\mathbb{R}^n)}.
$$
Analogously, given $\beta>0$, we say that $\{T_t\}_{t>0}$ satisfies $L^p-L^q$ off-diagonal estimates of polynomial type with order $\beta>0$, denoted by $\{T_t\}_{t>0}\in \mathcal{F}_{\beta}(L^p\rightarrow L^q)$ if for all closed sets $E$, $F$, all $f$, and all $t>0$ we have
$$
\|T_{t}(f\,\chi_E)\,\chi_F\|_{L^q(\mathbb{R}^n)}
\leq
Ct^{-n\left(\frac{1}{p}-\frac{1}{q}\right)}\left(1+\frac{d(E,F)^2}{t^2}
    \right)^{-\left(\beta+\frac{n}{2}\left(\frac{1}{p}-\frac{1}{q}\right)\right)}
\|f\,\chi_E\|_{L^p(\mathbb{R}^n)}.
$$

\medskip

The Heat and the Poisson semigroups satisfy respectively off-diagonal estimates of exponential and polynomial type. Before making this precise, let us recall the definition of $p_-(L)$ and $p_+(L)$ in \eqref{p-}--\eqref{p+} and introduce two more parameters related to the gradient of the Heat semigroup. Let $(q_-(L),q_+(L))$ be the maximal open interval on which the gradient of the Heat semigroup, i.e.  $\{t\nabla_y e^{-t^2L}\}_{t>0}$, is uniformly bounded on $L^p(\mathbb{R}^n)$:
\begin{align*}
q_-(L) &:= \inf\left\{p\in(1,\infty): \sup_{t>0} \|t\nabla_y e^{-t^2L} \|_{L^p(\mathbb{R}^n)\rightarrow L^p(\mathbb{R}^n)}< \infty\right\},
\\[4pt]
q_+(L)& := \sup\left\{p\in(1,\infty) : \sup_{t>0} \|t\nabla_y e^{-t^2L} \|_{L^p(\mathbb{R}^n)\rightarrow L^p(\mathbb{R}^n)}< \infty\right\}.
\end{align*}
From \cite{Auscher} (see also \cite{AuscherMartell:II}) we know that $p_-(L)=1$ and  $p_+(L)=\infty$ if $n=1,2$; and if $n\ge 3$ then $p_-(L)<\frac{2\,n}{n+2}$ and $p_+(L)>\frac{2\,n}{n-2}$. Moreover, $q_-(L)=p_-(L)$, $ q_+(L)^*\le p_+(L)$, and we always have $q_+(L)>2$, with $q_+(L)=\infty$ if $n=1$.

The importance of these parameters stems from the fact that, besides giving the maximal intervals on which either the Heat semigroup or its gradient is uniformly bounded, they characterize the maximal open intervals on which off-diagonal estimates of exponential type hold (see \cite{Auscher} and \cite{AuscherMartell:II}). More precisely, for every $m\in \N_0$, there hold
$$
\{(t^2L)^me^{-t^2L}\}_{t>0}\in \mathcal{F}_\infty(L^p-L^q) \quad \textrm{ for all} \quad p_-(L)<p\leq q<p_+(L)$$
and
$$
\{t\nabla_ye^{-t^2L}\}_{t>0}\in \mathcal{F}_\infty(L^p-L^q) \quad \textrm{ for all} \quad q_-(L)<p\leq q<q_+(L).
$$
From these off-diagonal estimates we show that, for every $m\in \N_0$,
\begin{align*}
\{(t\sqrt{L}\,)^{2m}e^{-t\sqrt{L}}\}_{t>0}, \
\in \mathcal{F}_{m+\frac{1}{2}}(L^p\rightarrow L^q),
\end{align*}
for all $p_-(L)<p\leq q< p_+(L)$, and
\begin{align*}
 &\{t\nabla_{y}(t^2L)^me^{-t^2L}\}_{t>0}, \ \{t\nabla_{y,t}(t^2L)^me^{-t^2L}\}_{t>0}\in \mathcal{F}_\infty(L^p\rightarrow L^q),
\\[4pt]
&  \{t\nabla_{y}(t\sqrt{L}\,)^{2m}e^{-t\sqrt{L}}\}_{t>0}\in \mathcal{F}_{m+1}(L^p\rightarrow L^q), \,
\{t\nabla_{y,t}(t\sqrt{L}\,)^{2m}e^{-t\sqrt{L}}\}_{t>0}\in \mathcal{F}_{m+\frac{1}{2}}(L^p\rightarrow L^q),
\end{align*}
for all $q_-(L)<p\leq q< q_+(L)$.

\medskip

To show these off-diagonal estimates we shall apply the following Lemma, whose proof follows \textit{mutatis mutandis} that of \cite[Lemma 2.3]{HofmannMartell}.

\begin{lemma}\label{lemma:composition}
Let $\{P_t\}_{t>0}$ and $\{Q_t\}_{t>0}$ be two families of linear operators. Given $1\le p\le q\le \infty$, assume that
$\{P_t\}_{t>0}\in\mathcal{F}_{\infty}(L^p\rightarrow L^q)$ and $\{Q_t\}_{t>0}\in\mathcal{F}_{\infty}(L^p\rightarrow L^p)$. Then, for all closed sets $E$, $F$, all $f$, and all $t, s>0$ we have
$$
\big\|(P_t\circ Q_s) (f\,\chi_E)\,\chi_F\big\|_{L^q(\mathbb{R}^{n})}
\leq
C t^{-n\left(\frac{1}{p}-\frac{1}{q}\right)}e^{-c\frac{d(E,F)^2}{\max\{t,s\}^2}} \|f\,\chi_E\|_{L^{p}(\R^n)}.$$
\end{lemma}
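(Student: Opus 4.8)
This is the standard scheme for composing two off-diagonal families, so I expect no essential difficulty; the one thing to get right is the geometric decomposition. Write $d:=d(E,F)$ and $g:=f\,\chi_E$ (so that $g=g\,\chi_E$), and fix $t,s>0$. The plan is to split $Q_s g$ according to the distance to $E$: with
$$
S_1:=\{x\in\R^n:\ d(x,E)\ge d/2\},
\qquad
S_2:=\{x\in\R^n:\ d(x,E)\le d/2\}
$$
(two \emph{closed} sets covering $\R^n$), set $h_1:=\chi_{S_1}\,Q_s g$ and $h_2:=(1-\chi_{S_1})\,Q_s g$, so that $Q_s g=h_1+h_2$, with $h_1$ supported in $S_1$ and $h_2$ supported in $S_2$. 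Then
$$
(P_t\circ Q_s)(g)\,\chi_F
=
P_t(h_1)\,\chi_F
+
P_t(h_2)\,\chi_F,
$$
and I would estimate the two terms separately, exploiting in each the decay of a different one of the two families: $S_1$ lies at distance $\ge d/2$ from $E$, while $S_2$ lies at distance $\ge d(E,F)-d/2=d/2$ from $F$.

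For the first term: since $\{Q_\tau\}_{\tau>0}\in\mathcal{F}_\infty(L^p\to L^p)$ and $g$ is supported in $E$, the off-diagonal estimate for the pair $(E,S_1)$ gives $\|h_1\|_{L^p}\lesssim e^{-c\,d^2/s^2}\,\|g\|_{L^p}$; applying then the plain $L^p\to L^q$ bound for $P_t$ (which is just the $\mathcal{F}_\infty(L^p\to L^q)$ estimate with $E=F=\R^n$) one gets $\|P_t(h_1)\,\chi_F\|_{L^q}\lesssim t^{-n(1/p-1/q)}\,e^{-c\,d^2/s^2}\,\|g\|_{L^p}$. For the second term: the uniform $L^p$-boundedness of $Q_s$ (again $\mathcal{F}_\infty(L^p\to L^p)$ with $E=F=\R^n$) yields $\|h_2\|_{L^p}\lesssim\|g\|_{L^p}$, and since $h_2$ is supported in $S_2$, the estimate $\{P_\tau\}_{\tau>0}\in\mathcal{F}_\infty(L^p\to L^q)$ for the pair $(S_2,F)$ gives $\|P_t(h_2)\,\chi_F\|_{L^q}\lesssim t^{-n(1/p-1/q)}\,e^{-c\,d^2/t^2}\,\|g\|_{L^p}$. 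Adding the two bounds and using $t,s\le\max\{t,s\}$ to replace both $e^{-c\,d^2/s^2}$ and $e^{-c\,d^2/t^2}$ by $e^{-c\,d^2/\max\{t,s\}^2}$ (after relabelling $c$) produces exactly the claimed inequality.

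As for obstacles, there really are none beyond bookkeeping; this is precisely the argument of \cite[Lemma 2.3]{HofmannMartell} up to notation. Two elementary points are worth recording along the way: that an $\mathcal{F}_\infty$ estimate specialized to $E=F=\R^n$ is nothing but ordinary $L^p\to L^q$ (or $L^p\to L^p$) boundedness with the correct $t$-power, and that the decomposition can be arranged through the two closed sets $S_1,S_2$ so that the off-diagonal hypotheses (stated for closed sets) apply verbatim to $h_1$ and $h_2$. One may also observe that when $d\le\max\{t,s\}$ the target exponential factor is comparable to $1$, so the statement is trivial there; the decomposition above needs no case distinction and handles all $d\ge0$ uniformly.
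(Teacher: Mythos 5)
Your proof is correct, and it is essentially the argument the paper itself invokes: the paper gives no written proof but refers to \cite[Lemma 2.3]{HofmannMartell}, which is precisely this two-set geometric splitting (decay of $Q_s$ from $E$ to the far region, decay of $P_t$ from the near region to $F$, and trivial $L^p\to L^p$ and $L^p\to L^q$ bounds from the $\mathcal{F}_\infty$ hypotheses with $E=F=\R^n$). The only details worth keeping in mind are exactly the ones you flagged, namely that $S_1,S_2$ are closed so the off-diagonal hypotheses apply verbatim, and that both exponentials $e^{-cd^2/s^2}$ and $e^{-cd^2/t^2}$ are dominated by $e^{-c''d^2/\max\{t,s\}^2}$ after shrinking the constant.
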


\medskip

To prove our claims, let us first consider
$$
t\nabla_y(t^2L)^me^{-t^2L}=C\frac{t}{\sqrt{2}}\nabla_ye^{-\frac{t^2}{2}L}\circ
\left(\frac{t^2}{2}L\right)^me^{-\frac{t^2}{2}L}.
$$
Taking $P_t=\frac{t}{\sqrt{2}}\nabla_ye^{-\frac{t^2}{2}L}$ and $Q_t=\left(\frac{t^2}{2}L\right)^me^{-\frac{t^2}{2}L}$ \ for all $t>0$, since
$\{P_t\}_{t>0}\in \mathcal{F}_\infty(L^p-L^q)$ and $\{Q_t\}_{t>0}\in \mathcal{F}_\infty(L^p-L^p)$, for all $q_-(L)<p\leq q<q_+(L)$, we conclude from Lemma \ref{lemma:composition} that $\{t\nabla_y(t^2L)^me^{-t^2L}\}_{t>0}\in \mathcal{F}_\infty(L^p-L^q)$, for all $q_-(L)<p\leq q<q_+(L)$.

\medskip

To prove that $\{t\nabla_{y,t}(t^2L)^me^{-t^2L}\}_{t>0}\in \mathcal{F}_\infty(L^p\rightarrow L^q)$, for all $q_-(L)<p\leq q<q_+(L)$, we just need to observe that
\begin{align*}
|t\nabla_{y,t}(t^2L)^me^{-t^2L}f(y)|&\lesssim |t\nabla_{y}(t^2L)^me^{-t^2L}f(y)|+
|(t^2L)^me^{-t^2L}f(y)|+|(t^2L)^{m+1}e^{-t^2L}f(y)|,
\end{align*}
and apply the off-diagonal estimates satisfied by each term.

\medskip

We next obtain the off-diagonal estimates of polynomial type satisfied by the operators related to the Poisson semigroup.
Following some ideas used in \cite[Lemma $5$.$1$]{HofmannMayboroda}, we shall combine the subordination formula
\begin{align}\label{FR}
e^{-t\sqrt{L}}f(y)=C\int_0^{\infty}\frac{e^{-u}}{\sqrt{u}}e^{-\frac{t^2L}{4u}}f(y) \, du,
\end{align}
with Minkowski's inequality and the off-diagonal estimates satisfied by $\{(t^2L)^me^{-t^2L}\}_{t>0}$ and by $\{t\nabla_y(t^2L)^me^{-t^2L}\}_{t>0}$.

To obtain that $\{(t\sqrt{L}\,)^{2m}e^{-t\sqrt{L}}\}_{t>0}\in \mathcal{F}_{m+\frac{1}{2}}(L^p\rightarrow L^q)$ for all $p_-(L)<p\leq q<p_+(L)$, take two closed sets $E$ and $F$, a function $f$ supported in $E$, and $t>0$. Apply \eqref{FR}, Minkowski's inequality, the off-diagonal estimates satisfied by $\{(tL)^me^{-tL}\}_{t>0}$, and change the variable $u$ into $ \left(1+d(E,F)^2/t^2\right)^{-1} u$:
\begin{align*}
&
\left(\int_F |(t\sqrt{L}\,)^{2m}e^{-t\sqrt{L}}f(y)|^q \, dy\right)^{\frac{1}{q}}
\\
&
\quad
=
C \left(\int_F \left|(t\sqrt{L}\,)^{2m}\int_0^{\infty}\frac{e^{-u}}{\sqrt{u}}e^{-\frac{t^2}{4u}L}f(y) \, du\right|^q \, dy\right)^{\frac{1}{q}}\\
&
\quad
\lesssim
\int_0^{\infty}e^{-u}u^{m+\frac{1}{2}}\left(\int_F \left|\left(\frac{t^2}{4u}L\right)^{m}e^{-\frac{t^2}{4u}L}f(y)\right|^q \, dy\right)^{\frac{1}{q}} \ \frac{du}{u}\\
&
\quad
\lesssim \int_0^{\infty}e^{-cu\left(1+\frac{d(E,F)^2}{t^2}\right)}u^{m+\frac{1}{2}}\left(\frac{t}{\sqrt{u}}\right)^{-n\left(\frac{1}{p}-\frac{1}{q}\right)} \ \frac{du}{u} \left(\int_E |f(y)|^p \, dy\right)^{\frac{1}{p}}
\\
&
\quad  =
C t^{-n\left(\frac{1}{p}-\frac{1}{q}\right)} \left(1+\frac{d(E,F)^2}{t^2}\right)^{-\left(m+\frac{1}{2}+\frac{n}{2}\left(\frac{1}{p}-\frac{1}{q}\right)\right)} \int_0^{\infty}e^{-cu}u^{m+\frac{1}{2}+\frac{n}{2}\left(\frac{1}{p}-\frac{1}{q}\right)} \ \frac{du}{u} \left(\int_E |f(y)|^p \, dy\right)^{\frac{1}{p}}
\\
&
\quad  =
C\,
t^{-n\left(\frac{1}{p}-\frac{1}{q}\right)} \left(1+\frac{d(E,F)^2}{t^2}\right)^{-\left(m+\frac{1}{2}+\frac{n}{2}\left(\frac{1}{p}-\frac{1}{q}\right)\right)}
\left(\int_E |f(y)|^p \, dy\right)^{\frac{1}{p}},
\end{align*}
where in the last equality we have used that $m\ge 0$ and that $p\le q$.

We next show that $\{t\nabla_{y,t}(t\sqrt{L}\,)^{2m}e^{-t\sqrt{L}}\}_{t>0}\in \mathcal{F}_{m+\frac{1}{2}}(L^p\rightarrow L^q)$ for all $q_-(L)<p\leq q<q_+(L)$. Apply subordination formula \eqref{FR}, and Minkowski's inequality to obtain
\begin{align*}
&
\left(\int_{F}|t\nabla_{y,t}(t\sqrt{L}\,)^{2m} e^{-t\sqrt{L}}f(y)|^q \, dy\right)^{\frac{1}{q}}
\\
&
\qquad
= C
\left(\int_{F}\left|t\nabla_{y,t}(t\sqrt{L}\,)^{2m}\int_0^{\infty} \frac{e^{-u}}{\sqrt{u}}e^{-\frac{t^2}{4u}L}f(y) \, du\right|^q \, dy\right)^{\frac{1}{q}}
\\
&
\qquad
\le C
\int_0^{\infty}e^{-u}u^{m+1}\left(\int_{F}\left|\frac{t}{2\sqrt{u}}
\nabla_{y,t}\left(\frac{t}{2\sqrt{u}}\sqrt{L}\right)^{2m} e^{-\frac{t^2}{4u}L}f(y)\right|^q \, dy\right)^{\frac{1}{q}} \ \frac{du}{u}.
\end{align*}
Note now that
\begin{multline*}
\left|\frac{t}{2\sqrt{u}}
\nabla_{y,t}\left(\frac{t}{2\sqrt{u}}\sqrt{L}\right)^{2m} e^{-\frac{t^2}{4u}L}f(y)\right|\approx
\left|\frac{t}{2\sqrt{u}}
\nabla_{y}\left(\frac{t}{2\sqrt{u}}\sqrt{L}\right)^{2m} e^{-\frac{t^2}{4u}L}f(y)\right|
\\
+
\left|u^{-\frac{1}{2}}
\left(\frac{t}{2\sqrt{u}}\sqrt{L}\right)^{2m} e^{-\frac{t^2}{4u}L}f(y)\right|
+
\left|u^{-\frac{1}{2}}\left(\frac{t}{2\sqrt{u}}\sqrt{L}\right)^{2(m+1)} e^{-\frac{t^2}{4u}L}f(y)\right|.
\end{multline*}
Then,
applying that, for all $K\in \N_0$, $\{t\nabla_y(t\sqrt{L})^{2K}e^{-t^2L}\}_{t>0},\{(t^2L)^Ke^{-t^2L}\}_{t>0}\in \mathcal{F}_{\infty}(L^p\rightarrow L^q)$, we have
\begin{align*}
&
\left(\int_{F}|t\nabla_{y,t}(t\sqrt{L}\,)^{2m} e^{-t\sqrt{L}}f(y)|^q \, dy\right)^{\frac{1}{q}}
\\
&
\qquad
\lesssim
t^{-n\left(\frac{1}{p}-\frac{1}{q}\right)} \int_0^{\infty}\left(u^{m+1+\frac{n}{2}\left(\frac{1}{p}-\frac{1}{q}\right)}+u^{m+\frac{1}{2}+\frac{n}{2}\left(\frac{1}{p}-\frac{1}{q}\right)} \right)e^{-c\left(1+\frac{d(E,F)^2}{t^2}\right)u}\ \frac{du}{u} \ \|f\|_{L^p(E)}
\\
&
\qquad
\leq
C\,t^{-n\left(\frac{1}{p}-\frac{1}{q}\right)}
\left(1+\frac{d(E,F)^2}{t^2}\right)^{-\left(m+\frac{1}{2}+\frac{n}{2}\left(\frac{1}{p} -\frac{1}{q}\right)\right)}\|f\|_{L^p(E)}.
\end{align*}

Finally to show that $\{t\nabla_{y}(t\sqrt{L}\,)^{2m}e^{-t\sqrt{L}}\}_{t>0}\in \mathcal{F}_{m+1}(L^p\rightarrow L^q)$ we proceed as above. Applying that, for all $K\in \N_0$, $\{t\nabla_y(t\sqrt{L})^{2K}e^{-t^2L}\}_{t>0}\in \mathcal{F}_{\infty}(L^p\rightarrow L^q)$, we have
\begin{align*}
&
\left(\int_{F}|t\nabla_{y}(t\sqrt{L}\,)^{2m} e^{-t\sqrt{L}}f(y)|^q \, dy\right)^{\frac{1}{q}}
= C
\left(\int_{F}\left|t\nabla_{y}(t\sqrt{L}\,)^{2m}\int_0^{\infty} \frac{e^{-u}}{\sqrt{u}}e^{-\frac{t^2}{4u}L}f(y) \, du\right|^q \, dy\right)^{\frac{1}{q}}
\\
&
\qquad
\le C
\int_0^{\infty}e^{-u}u^{m+1}\left(\int_{F}\left|\frac{t}{2\sqrt{u}}
\nabla_{y}\left(\frac{t}{2\sqrt{u}}\sqrt{L}\right)^{2m} e^{-\frac{t^2}{4u}L}f(y)\right|^q \, dy\right)^{\frac{1}{q}} \ \frac{du}{u}
\\
&
\qquad\qquad
\lesssim
t^{-n\left(\frac{1}{p}-\frac{1}{q}\right)} \int_0^{\infty}u^{m+1+\frac{n}{2}\left(\frac{1}{p}-\frac{1}{q}\right)}e^{-c\left(1+\frac{d(E,F)^2}{t^2}\right)u}\ \frac{du}{u} \ \|f\|_{L^p(E)}
\\
&
\qquad\qquad\qquad
\leq
C\,t^{-n\left(\frac{1}{p}-\frac{1}{q}\right)}
\left(1+\frac{d(E,F)^2}{t^2}\right)^{-\left(m+1+\frac{n}{2}\left(\frac{1}{p} -\frac{1}{q}\right)\right)}\|f\|_{L^p(E)}.
\end{align*}

\section{Tent spaces}\label{section:TSpaces}

\medskip

We start with some definitions. Let $\R_+^{n+1}$ denote the upper-half space, that is, the set of points $(y,t)\in \R^n\times \R$ with $t>0$. Given $\alpha>0$ and $x\in \mathbb{R}^n$ we define the cone of aperture  $\alpha$ with vertex at $x$ by
$$
\Gamma^{\alpha}(x):=\{(y,t)\in \R_+^{n+1} : |x-y|<\alpha t\}.
$$
When $\alpha=1$ we simply write $\Gamma(x)$. For a closed set $E$ in $\mathbb{R}^n$, set
$$
\mathcal{R}^{\alpha}(E):=\bigcup_{x\in E}\Gamma^{\alpha}(x).
$$
When $\alpha=1$ we simplify the notation by writing $\mathcal{R}(E)$ instead of $\mathcal{R}^1(E)$.

We also define the operator $\mathcal{A}^{\alpha}$, $\alpha>0$, (and simply write $\mathcal{A}$ when $\alpha=1$) by
\begin{align}\label{AF}
\mathcal{A}^{\alpha}F(x):=\left(\iint_{\Gamma^{\alpha}(x)}|F(y,t)|^2 \ \frac{dy \, dt}{t^{n+1}}\right)^{\frac{1}{2}}.
\end{align}

\subsection{Change of angles}

Related to the above operators we obtain Proposition \ref{prop:alpha}, which is a weighted version of  \cite[Proposition $4$]{CoifmanMeyerStein} and \cite{Auscherangles}, see also \cite{Lerner}.

\begin{proposition}[Change of angles]\label{prop:alpha}
Let $0< \alpha\leq \beta<\infty$.
\begin{list}{$(\theenumi)$}{\usecounter{enumi}\leftmargin=1cm
\labelwidth=1cm\itemsep=0.2cm\topsep=.2cm
\renewcommand{\theenumi}{\roman{enumi}}}

\item  For every $w\in A_r$, $1\le r<\infty$, there holds
$$
\|\mathcal{A}^{\beta}F\|_{L^p(w)}\
\leq
C \left(\frac{\beta}{\alpha}\right)^{\frac{nr}{p}}
 \|\mathcal{A}^{\alpha} F\|_{L^p(w)} \quad \textrm{for all} \quad 0<p\leq 2r.
$$

\item  For every $w\in RH_{s'}$, $1\le s<\infty$, there holds
$$
\|\mathcal{A}^{\alpha} F\|_{L^p(w)}\leq  C\left(\frac{\alpha}{\beta}\right)^{\frac{n}{sp}} \|\mathcal{A}^{\beta} F\|_{L^p(w)}\quad \text{for all} \quad \frac{2}{s}\leq p<\infty.
$$
\end{list}
\end{proposition}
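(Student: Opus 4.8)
\textbf{Proof proposal for Proposition \ref{prop:alpha} (Change of angles).}

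The plan is to reduce both inequalities to pointwise or integral comparisons between $\mathcal{A}^\beta F$ and $\mathcal{A}^\alpha F$ after averaging, using the defining $A_r$ and $RH_{s'}$ conditions of the weight to absorb the geometric blow-up coming from the dilation of the cone. For part $(i)$, the starting point is the classical observation that enlarging the aperture can be controlled by first widening the base of integration and then shrinking it back by an averaging argument: for a point $(y,t)\in\Gamma^\beta(x)$, any $z$ with $|z-y|<\alpha t$ satisfies $(y,t)\in\Gamma^{\alpha+\beta}(z)\subset\Gamma^{2\beta}(z)$, so integrating in $z$ over the ball $B(y,\alpha t)$ and using Fubini one gets the ``averaging trick'' $\mathcal{A}^\beta F(x)^2 \lesssim (\beta/\alpha)^{n}\,\dashint_{B(x,C\beta t)}(\cdots)$; more efficiently, one uses the standard inequality relating $\mathcal{A}^\beta F$ to the Hardy--Littlewood maximal function of $(\mathcal{A}^\alpha F)^{q}$ for a suitable small power $q$. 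Concretely, I would first prove, for $0<p\le 2r$, the pointwise bound
\begin{align*}
\mathcal{A}^\beta F(x) \lesssim \left(\frac{\beta}{\alpha}\right)^{\frac{n}{p}}\left(\mathcal{M}\big((\mathcal{A}^\alpha F)^{p}\big)(x)\right)^{\frac1p},
\end{align*}
which is an angle-aware refinement of \cite[Proposition 4]{CoifmanMeyerStein} (track the dependence on $\beta/\alpha$ carefully through the Fubini step). Then, since $w\in A_r$ and $p\le 2r$ forces $w\in A_{2r/p}$ when $p<2r$ (and the endpoint $p=2r$ needs $w\in A_1$), the maximal operator $\mathcal{M}$ is bounded on $L^{2r/p}(w)$; applying it to the function $(\mathcal{A}^\alpha F)^{p}\in L^{2r/p}(w)$ and taking $p$-th roots yields
\begin{align*}
\|\mathcal{A}^\beta F\|_{L^p(w)} \lesssim \left(\frac{\beta}{\alpha}\right)^{\frac{n}{p}}\big\|\mathcal{M}((\mathcal{A}^\alpha F)^{p})\big\|_{L^{2r/p}(w)}^{1/p}\lesssim \left(\frac{\beta}{\alpha}\right)^{\frac{n}{p}}\|\mathcal{A}^\alpha F\|_{L^p(w)}.
\end{align*}
The exponent $nr/p$ in the statement (rather than $n/p$) comes from keeping track of the $A_r$ constant of $w$ under the dilation of balls by a factor $\sim\beta/\alpha$: the constant of $\mathcal{M}$ on $L^{2r/p}(w)$ enters, and $[w]_{A_{2r/p}}$ scales like $(\beta/\alpha)^{\text{const}\cdot r}$ through the $A_r$ characteristic, which I would make precise by a direct computation with the definition of $A_r$ on dilated balls.

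For part $(ii)$, I would dualize: the reverse inequality (shrinking the aperture is easy in the unweighted $L^2$ sense, but here we want to estimate the \emph{smaller} cone's square function by the \emph{larger} one's in $L^p(w)$ with $p\ge 2/s$), so I pass to the dual. Write $p\ge 2/s$; the case $p=2$ is the classical unweighted-type identity (both sides comparable on $L^2$ up to dimensional constants, but $w\in RH_{s'}$ is exactly what lets one run a good-$\lambda$ or duality argument). The cleanest route is: for $g\ge 0$ in the dual space, estimate $\int_{\R^n}\mathcal{A}^\alpha F(x)^{?}\,g(x)\,w(x)\,dx$ by Fubini against the tent over $\{g w\}$, and compare the tent of $\Gamma^\alpha$ with that of $\Gamma^\beta$, where the $RH_{s'}$ condition controls $w(B(y,\beta t))\lesssim (\beta/\alpha)^{n/s}\,w(B(y,\alpha t))$ after an application of Hölder with exponents $s',s$ and the reverse Hölder inequality. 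This produces the factor $(\alpha/\beta)^{n/(sp)}$. Equivalently — and this is probably what I would actually write — one deduces $(ii)$ from $(i)$ by a duality between weighted tent spaces: if $\mathcal{A}^\beta:L^{p'}(w')\to L^{p'}(w')$-type estimates are dualized appropriately (with $w'$ a companion weight), the $RH_{s'}$ hypothesis on $w$ translates into an $A_{r}$-type hypothesis on the relevant dual weight, and $(i)$ applies. I would set this up via the pairing $\iint_{\R^{n+1}_+}F\,\overline{G}\,\frac{dy\,dt}{t}$ and the known identification of the dual of the weighted tent space, citing \cite{CoifmanMeyerStein} for the unweighted template.

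The main obstacle I anticipate is the \emph{sharp tracking of the $\beta/\alpha$ exponent} — getting $nr/p$ in $(i)$ and $n/(sp)$ in $(ii)$, not merely \emph{some} power — since this sharpness is the whole point of the proposition and is what later feeds the ``change of angle'' applications to the square functions. This requires being careful at two places: (1) in the Fubini/averaging step, the measure of the ball $B(y,\alpha t)$ contributes $\alpha^{-n}$ and the cone $\Gamma^{\alpha+\beta}\subset\Gamma^{2\beta}$ contributes the $\beta$-side, so the raw geometric factor is $(\beta/\alpha)^n$, and it is the \emph{weighted} averaging (the $A_r$ or $RH_{s'}$ step on dilated balls) that upgrades or redistributes this to the stated power of $p$ and $r$ (resp.\ $s$); (2) the endpoint cases $p=2r$ in $(i)$ and $p=2/s$ in $(ii)$, where the relevant auxiliary exponent degenerates to $1$ and one must invoke the $A_1$ / $RH_{s'}$-at-the-endpoint boundedness of $\mathcal{M}$ (from $L^1(w)$ to $L^{1,\infty}(w)$, combined with a Kolmogorov / weak-type interpolation to recover a strong bound after taking roots). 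A secondary technical point is that $F$ is only assumed measurable on $\R^{n+1}_+$, so all manipulations (Fubini, duality) should be justified first for $F$ with $\mathcal{A}^\alpha F\in L^p(w)$ finite, which makes the right-hand side finite and everything legitimate; the general case follows since otherwise there is nothing to prove.
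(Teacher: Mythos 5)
Your proposal for part~$(i)$ has a genuine gap that can be seen by comparing it against the sharpness result (Remark~\ref{remark-sharp-coa}) that accompanies this proposition. You propose the pointwise bound
$$
\mathcal{A}^\beta F(x)\lesssim \left(\frac{\beta}{\alpha}\right)^{\frac{n}{p}}\left(\mathcal{M}\big((\mathcal{A}^\alpha F)^p\big)(x)\right)^{\frac1p},
$$
and then plan to apply the boundedness of $\mathcal{M}$ on $L^{2r/p}(w)$ for $w\in A_{r}\subset A_{2r/p}$. But the operator norm of $\mathcal{M}$ on $L^{2r/p}(w)$ depends only on $w$ and $2r/p$; it is a fixed constant, completely independent of $\alpha$ and $\beta$. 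Your claim that ``$[w]_{A_{2r/p}}$ scales like $(\beta/\alpha)^{\text{const}\cdot r}$'' is not meaningful: the weight $w$ is fixed and so is its characteristic. Hence your scheme would produce the estimate $\|\mathcal{A}^\beta F\|_{L^p(w)}\lesssim (\beta/\alpha)^{n/p}\|\mathcal{A}^\alpha F\|_{L^p(w)}$, \emph{not} the stated $(\beta/\alpha)^{nr/p}$. For $r>1$ this is a strictly stronger inequality than the proposition, and it is false: the power weight $w_\theta(x)=|x|^{-\theta}$ with $\theta$ close to $-n(r-1)$ lies in $A_r$, and on the atom $a(y,t)=\chi_B(y)\chi_{[1/2,1]}(t)$ one computes $\|\mathcal{A}^\alpha a\|_{L^p(w_\theta)}\approx\alpha^{(n-\theta)/p}\approx\alpha^{nr/p}$ while $\|\mathcal{A}a\|_{L^p(w_\theta)}\approx 1$. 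This forces the conclusion that the pointwise bound you posit cannot hold (with the exponent $n/p$) — the ``angle-aware refinement of CMS Proposition 4'' you appeal to does not exist in this form. The factor of $r$ is intrinsically a feature of the weighted measure, coming from the inequality $w(B(y,\beta t))\lesssim(\beta/\alpha)^{nr}w(B(y,\alpha t))$ that follows directly from the $A_r$ definition on dilated balls; it does not arise from any characteristic-constant scaling. The paper's actual route is to prove the $p=2$ estimate directly by Fubini and this doubling inequality, obtain the range $0<p\le 2r$ for $1<r<\infty$ by Rubio de Francia extrapolation on the family of pairs $\big((\mathcal{A}^\alpha F)^{2/r_0},\alpha^n(\mathcal{A}F)^{2/r_0}\big)$, and handle the endpoint $r=1$, $0<p<2$ by a good-$\lambda$ argument using the $\gamma$-density set $E_\lambda^*$ and the weak-$(1,1)$ bound for $\mathcal{M}$ under $w\in A_1$.

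Your sketch for part~$(ii)$ is too thin to evaluate: ``pass to the dual,'' ``cite [CMS] for the unweighted template,'' and ``the $RH_{s'}$ hypothesis translates into an $A_r$ hypothesis on a companion weight'' are stated without enough detail to check whether the $n/(sp)$ exponent is actually reproduced, and weighted tent-space duality with the required sharp constant is itself a nontrivial matter. The paper again uses the $p=2$ computation (now with the $RH_{s'}$ ball comparison $w(B(y,t))\lesssim\alpha^{-n/s}w(B(y,\alpha t))$), extrapolation in the reverse-Hölder scale for $1<s<\infty$, and a separate Whitney/good-$\lambda$ argument for the endpoint $s=1$, $2<p<\infty$, where the cone is first enlarged to $\Gamma^{6\sqrt n\alpha}$ so that a Whitney cube's center stays inside the enlarged cone. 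If you want to pursue a duality proof you would need to make the tent-space dual pairing and the weight transfer explicit; as it stands, the argument is not a proof.
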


In Remark \ref{remark-sharp-coa} below we shall show that the previous estimates are sharp: the exponents $nr/p$ in $(i)$ and $n/sp$ in $(ii)$ cannot be improved. This should be compared with \cite{Auscherangles} where the unweighted case was considered (see also \cite{Lerner}).

To prove this proposition we need the following extrapolation result:

\begin{lemma}\label{lemma:extrapol}
Let $\mathcal{F}$ be a given family of pairs $(f,g)$ of non-negative and not identically zero measurable functions on $\R^n$.
\begin{list}{$(\theenumi)$}{\usecounter{enumi}\leftmargin=1cm
\labelwidth=1cm\itemsep=0.2cm\topsep=.2cm
\renewcommand{\theenumi}{\alph{enumi}}}

\item Suppose that for some fixed exponent $p_0$, $1\le p_0<\infty$, and every weight $w\in A_{p_0}$,
\begin{equation}\label{extrapol-Ap:hyp}
\int_{\mathbb{R}^{n}}f(x)^{p_0}\,w(x)\,dx
\leq
C_{w}
\int_{\mathbb{R}^{n}}g(x)^{p_0}\,w(x)\,dx,
\qquad\forall\,(f,g)\in\mathcal{F}.
\end{equation}
Then, for all $1<p<\infty$ and for all $w\in A_p$,
\begin{equation}\label{extrapol-Ap:con}
\int_{\mathbb{R}^{n}}f(x)^{p}\,w(x)\,dx
\leq
C_{w,p}
\int_{\mathbb{R}^{n}}g(x)^{p}\,w(x)\,dx,
\qquad\forall\,(f,g)\in\mathcal{F}.
\end{equation}

\item Suppose that for some fixed exponent $q_0$, $1\le q_0<\infty$, and every weight $w\in RH_{q_0'}$,
\begin{equation}\label{extrapol-RHq:hyp}
\int_{\mathbb{R}^{n}}f(x)^{\frac1{q_0}}\,w(x)\,dx
\leq
C_{w}
\int_{\mathbb{R}^{n}}g(x)^{\frac1{q_0}}\,w(x)\,dx,
\qquad\forall\,(f,g)\in\mathcal{F}.
\end{equation}
Then, for all $1<q<\infty$ and for all $w\in RH_{q'}$,
\begin{equation}\label{extrapol-RHq:con}
\int_{\mathbb{R}^{n}}f(x)^{\frac1q}\,w(x)\,dx
\leq
C_{w,q}
\int_{\mathbb{R}^{n}}g(x)^{\frac1q}\,w(x)\,dx,
\qquad\forall\,(f,g)\in\mathcal{F}.
\end{equation}
\end{list}
\end{lemma}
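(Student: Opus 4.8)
Both parts are instances of Rubio de Francia extrapolation for families of pairs of non-negative functions, and the plan is to deduce them from (or reprove them along the lines of) the classical extrapolation theorems; cf. \cite{GCRF85, CruzMartellPerez, AuscherMartell:I}.

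Part $(a)$ is precisely the classical two-sided Rubio de Francia extrapolation theorem in its ``pairs of functions'' form: if the inequality holds in $L^{p_0}(w)$ for every $w\in A_{p_0}$, then it holds in $L^{p}(w)$ for every $1<p<\infty$ and every $w\in A_p$; one may invoke \cite{CruzMartellPerez} or \cite{GCRF85} directly. If a self-contained argument is wanted, fix $1<p<\infty$ and $w\in A_p$, assume the right-hand side of \eqref{extrapol-Ap:con} is finite, and treat first the case $p\ge p_0$. Using $L^{p/p_0}(w)$-duality, write $\|f\|_{L^p(w)}^{p_0}=\int_{\mathbb{R}^n}f(x)^{p_0}h(x)\,w(x)\,dx$ with $0\le h$ and $\|h\|_{L^{(p/p_0)'}(w)}=1$; apply a Rubio de Francia iteration to produce a majorant $\mathcal{R}h\ge h$ with $\|\mathcal{R}h\|_{L^{(p/p_0)'}(w)}\le 2$ and $(\mathcal{R}h)\,w\in A_{p_0}$ (constant depending only on $w$); use the hypothesis \eqref{extrapol-Ap:hyp} with the weight $(\mathcal{R}h)\,w$ and close with H\"older's inequality. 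The case $p<p_0$ is handled by the analogous downward iteration. The room to run the iteration is provided by the self-improvement of the $A_p$ scale, Proposition \ref{prop:weights}$(iii)$.

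Part $(b)$ is the reverse-H\"older analogue of $(a)$ --- equivalently, the limited-range extrapolation theorem of \cite{CruzMartellPerez} with lower endpoint $0$ and upper endpoint $1$ --- and I would prove it by transplanting the scheme of $(a)$ to the reverse-H\"older classes. Fix $1<q<\infty$ and $w\in RH_{q'}$, and assume $\int_{\mathbb{R}^n}g(x)^{1/q}w(x)\,dx<\infty$. Since the exponent $1/q$ lies below $1$, ordinary $L^p$-duality is replaced by the reverse-H\"older duality for exponents in $(0,1)$: $\bigl(\int_{\mathbb{R}^n}f^{1/q}w\,dx\bigr)^{q}=\inf\int_{\mathbb{R}^n}f\,h\,w\,dx$, the infimum over $h>0$ with $\int_{\mathbb{R}^n}h^{-1/(q-1)}w\,dx=1$. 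One then constructs --- by a Rubio de Francia iteration adapted to this situation, or, alternatively, by invoking the identity $v\in RH_{s'}\cap A_a\iff v^{s'}\in A_{s'(a-1)+1}$ to transfer the problem into the $A_p$ world, apply $(a)$, and transfer back --- a controlled substitute for $h$ such that $w$ times a suitable power of that substitute lies in $RH_{q_0'}$; then the hypothesis \eqref{extrapol-RHq:hyp} applies at exponent $1/q_0$, and H\"older's inequality finishes the estimate. The needed room now comes from the self-improvement of the reverse-H\"older scale, Proposition \ref{prop:weights}$(iv)$, together with $A_\infty=\bigcup_{1<s\le\infty}RH_s$, Proposition \ref{prop:weights}$(v)$.

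The routine part is $(a)$; the main obstacle is the bookkeeping in $(b)$. Because both the target power $1/q$ and the datum power $1/q_0$ are below $1$, one must work throughout with the negative-exponent (reverse-H\"older) duality and build the Rubio de Francia majorant inside the correct reverse-H\"older class rather than inside $A_1$, all the while checking that the auxiliary weight assembled from $w$ and that majorant lands exactly in $RH_{q_0'}$ with constants under control. Once this is arranged, both conclusions follow by H\"older's inequality as in $(a)$.
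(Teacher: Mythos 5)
Your part $(a)$ is the classical Rubio de Francia duality-plus-iteration argument and is sound in spirit, but it is worth noting that this is not the paper's route: the paper instead cites \cite[Theorem 3.9]{CruzMartellPerez} directly, and the real content of the paper's argument is a truncation trick (replacing $f$ by $f_N=f\chi_{\{x\in B(0,N):\,f(x)\le N\}}$) to bypass the hypothesis in the cited theorem that the left-hand sides be a priori finite, followed by monotone convergence. Your sketch doesn't address that finiteness issue at all --- if you intend to invoke \cite{CruzMartellPerez} rather than reprove extrapolation from scratch, you would need this truncation; if you truly run the self-contained duality argument, you should say explicitly that the sup-formulation of $L^{p/p_0}(w)$-duality is used (the sup need not be attained), rather than asserting the norm ``$=\int f^{p_0}h\,w\,dx$'' for a specific $h$.

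Part $(b)$ is where the genuine gap is. You correctly identify the statement as a limited-range extrapolation with lower endpoint $0$ and upper endpoint, in effect, $1$ --- that is the right structure. But you then propose to \emph{reprove} that theorem via a Rubio de Francia iteration run through reverse-H\"older duality for exponents in $(0,1)$, and this sketch does not close. The stated step ``the hypothesis \eqref{extrapol-RHq:hyp} applies at exponent $1/q_0$, and H\"older's inequality finishes the estimate'' is problematic: for $L^r$ with $0<r<1$, H\"older reverses direction, so the pairing $\int fhw\,dx$ is bounded \emph{below}, not above, by $\|f\|_{L^{1/q}(w)}\|h\|_{L^{(1/q)'}(w)}$, and it is not at all clear how to pass from a bound on $\int f^{1/q_0}(\text{weight})\,dx$ to a bound on $\int fhw\,dx$. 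The alternative you float --- transferring to the $A_p$ world via $v\in A_a\cap RH_{s'}\iff v^{s'}\in A_{s'(a-1)+1}$ and applying $(a)$ --- is not worked out either, and it is not obvious how to choose the auxiliary $A_a$ parameter so that the back-and-forth transfer lands exactly where you want. The paper avoids all of this by \emph{applying}, not reproving, the limited-range extrapolation theorem \cite[Theorem 4.9]{AuscherMartell:I} or \cite[Theorem 3.31]{CruzMartellPerez}: setting $p_+=2q$, $r_0=2q/q_0$, choosing a small $p_-$, and extrapolating the pairs $\bigl(f_N^{1/(2q)},g^{1/(2q)}\bigr)$ from exponent $r_0$ (where the weight class is exactly $RH_{q_0'}$) to $p=2$ (where the weight class is exactly $A_{2/p_-}\cap RH_{q'}\ni w$), again with the $\mathcal{F}_N$ truncation to handle finiteness. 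Your outline would need the entire machinery of that theorem to be reconstructed, whereas the paper's reduction is a short, explicit computation.
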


Part $(a)$ is the so-called  Rubio de Francia extrapolation theorem (cf. \cite{GC-extrapol, RdF}) written in terms of pairs of functions rather than in terms of boundedness of operators. The reader is referred to \cite{CruzMartellPerez} for a complete account of this topic. There is, however, a subtle difference between $(a)$ and \cite[Theorem 3.9]{CruzMartellPerez}: in the latter both the hypothesis and the conclusions are assumed to hold for all pairs $(f,g)\in\mathcal{F}$ for which the left-hand sides are finite. Here we do not make such assumptions and, in particular, we do have that the infiniteness of the left-hand side will imply that of the right-hand one. This formulation is more convenient for our purposes and its proof becomes a simple consequence of \cite[Theorem 3.9]{CruzMartellPerez}. The extrapolation result in $(b)$ is not written explicitly in \cite{CruzMartellPerez}, but can be easily obtained using \cite[Theorem 4.9]{AuscherMartell:I} and \cite[Theorem 3.31]{CruzMartellPerez} (see also \cite[Proposition 2.3]{AuscherHofmannMartell} for a particular case).

\begin{proof}
We start with $(a)$. Given a family $\mathcal{F}$ as in the statement and an arbitrary large number $N>0$ we consider the new family
$$
\mathcal{F}_N
:=
\big\{(f_N,g): (f,g)\in\mathcal{F}, f_N:=f\chi_{\{x\in B(0,N):f(x)\leq N\}}
\big\}.
$$
Note that
\begin{equation}\label{extrap-LHS-finite}
\int_{\mathbb{R}^n}f_N(x)^r w(x)dx \leq  N^r w(B(0,N))<\infty, \quad \textrm{for all}\quad 0< r<\infty \quad \textrm{and} \quad w\in A_{\infty}.
\end{equation}
From \eqref{extrapol-Ap:hyp} and the fact that $f_N\le f$, we clearly obtain that the same estimate holds for every pair in $\mathcal{F}_N$ (with a constant uniform on $N$) with a left-hand side that is always finite by \eqref{extrap-LHS-finite}. Thus we can apply \cite[Theorem 3.9]{CruzMartellPerez} to $\mathcal{F}_N$ to conclude that \eqref{extrapol-Ap:con} holds for all pairs $(f_N,g)\in\mathcal{F}_N$ (with a constant uniform on $N$), since again the left-hand side is always finite by \eqref{extrap-LHS-finite}. To complete the proof we just need to invoke the Monotone Convergence Theorem.

We next obtain $(b)$. Let us fix $1<q<\infty$ and $w\in RH_{q'}$. As before we first work with $\mathcal{F}_N$.
Since $w\in RH_{q'}\subset A_\infty$, there exists $p_0$ such that $w\in A_{p_0}$. We set $p_+:=2q$, $r_0:=\frac{2q}{q_0}$
and pick $0<p_-<\min\left\{\frac{2q}{q_0},\frac{2}{p_0},2\right\}$. We  then have that $0<p_-<r_0\leq p_+$, and for all $w_0\in A_{\frac{r_0}{p_-}}\cap RH_{\left(\frac{p_+}{r_0}\right)'}\subset
RH_{\left(\frac{p_+}{r_0}\right)'}=RH_{q_0'}$,
\begin{multline}
\int_{\mathbb{R}^n}
\left(f_N(x)^{\frac{1}{2q}}\right)^{r_0}w_0(x)dx
=\int_{\mathbb{R}^n}f_N(x)^{\frac{1}{q_0}}w_0(x)dx\leq\int_{\mathbb{R}^n}f(x)^{\frac{1}{q_0}}w_0(x)dx
\\
\leq C\int_{\mathbb{R}^n}g(x)^{\frac{1}{q_0}}w_0(x)dx
=
C\int_{\mathbb{R}^n}\left(g(x)^{\frac{1}{2q}}\right)^{r_0}w_0(x)dx,
\end{multline}
with $C$ independent of $N$, and for every pair $(f_N,g)\in\mathcal{F}_N$. Note that for each pair the left-hand side is finite by \eqref{extrap-LHS-finite}.
Therefore, applying \cite[Theorem 4.9]{AuscherMartell:I} or \cite[Theorem 3.31]{CruzMartellPerez}, we obtain, for all $p_-<p<p_+$ and for all $\widetilde{w}\in A_{\frac{p}{p_-}}\cap RH_{\left(\frac{p_+}{p}\right)'}$,
 \begin{align}\label{extralem1}
\int_{\mathbb{R}^n}f_N(x)^{\frac{p}{2q}}\widetilde{w} (x)dx\leq C\int_{\mathbb{R}^n}g(x)^{\frac{p}{2q}} \widetilde{w}(x)dx,
\end{align}
with $C$ independent of $N$,
for every pair $(f_N,g)\in\mathcal{F}_N$.
Then, note that $p=2$ satisfies that $p_-<p<p_+$ and  also $w\in A_{p_0}\cap RH_{q'}\subset A_{\frac{2}{p_-}}\cap RH_{\left(\frac{p_+}{2}\right)'}$. Thus, we can apply \eqref{extralem1} with $p=2$ and $\widetilde{w}=w$ to obtain
 \begin{align*}
\int_{\mathbb{R}^n}f_N(x)^{\frac{1}{q}} w(x)dx\leq C\int_{\mathbb{R}^n}g(x)^{\frac{1}{q}}w (x)dx,
\end{align*}
with $C$ independent of $N$. Letting $N\rightarrow \infty$, the Monotone Convergence Theorem yields the desired estimate \eqref{extrapol-RHq:con}.
\end{proof}

\medskip

Before proving Proposition \ref{prop:alpha}  let us recall the following property satisfied by Muckenhoupt weights.  Given $1\le p,q<\infty$,
for every ball $B$ and every measurable set $E\subset B$,
\begin{align}\label{pesosineq:Ap}
\frac{w(E)}{w(B)}\ge [w]_{A_{p}}^{-1}\left(\frac{|E|}{|B|}\right)^{p}, \qquad\forall\,w\in A_p,
\end{align}
and
\begin{align}\label{pesosineq:RHq}
\frac{w(E)}{w(B)}
\leq
[w]_{RH_{q'}}\left(\frac{|E|}{|B|}\right)^{\frac{1}{q}}, \qquad\forall\,w\in RH_{q'}.
\end{align}

\begin{proof}[Proof of Proposition \ref{prop:alpha}, part $(i)$]

We first observe that if $0<\alpha \le \beta<\infty$ then $\mathcal{A}^\beta F(x)= \mathcal{A}^{{\beta}/{\alpha}} \widetilde{F}$, where $\widetilde{F}(x,t)=\alpha^{\frac{n}{2}}F(x,t/\alpha)$. Thus, we can reduce matters to obtaining that for every $\alpha\ge 1$ and for every $w\in A_r$, $1\le r<\infty$, there holds
\begin{equation}\label{change-alph-1}
\|\mathcal{A}^{\alpha}F\|_{L^p(w)}\
\leq
C \alpha ^{\frac{nr}{p}}
 \|\mathcal{A}F\|_{L^p(w)}, \quad \textrm{for all} \quad 0<p\leq 2r.
\end{equation}

We then prove \eqref{change-alph-1} by splitting the proof into three steps. We first obtain the case $p=2$ and $1\leq r<\infty$. From this,  we extrapolate concluding the desired estimate in the ranges $0<p\le 2\,r$ and $1<r<\infty$. Finally, we shall consider the case $r=1$ and $0<p<2$.

Fix from now on $\alpha>1$. For the first step, let $p=2$ and $w\in A_{r_0}$, $1\leq r_0<\infty$.  From \eqref{pesosineq:Ap}, we easily obtain
\begin{align}\label{Change-p=2:Ar}
\|\mathcal{A}^{\alpha}F\|_{L^2(w)}&=\left(\int_{\mathbb{R}^n}\int_0^{\infty}\int_{|x-y|<\alpha t}|F(y,t)|^2 \frac{dy \, dt}{t^{n+1}}w(x) dx\right)^{\frac{1}{2}}
\\
\nonumber & =
\left(\int_{\mathbb{R}^n}\int_0^{\infty}|F(y,t)|^2 w(B(y,\alpha t)) \frac{dy \, dt}{t^{n+1}}\right)^{\frac{1}{2}}\nonumber
\\
\nonumber & \lesssim \alpha^{\frac{nr_0}{2}}
\left(\int_{\mathbb{R}^n}\int_0^{\infty}|F(y,t)|^2 w(B(y,t)) \frac{dy \, dt}{t^{n+1}}\right)^{\frac{1}{2}}
\\
\nonumber &=
\alpha^{\frac{nr_0}{2}}\left(\int_{\mathbb{R}^n}\int_0^{\infty}\int_{|x-y|< t}|F(y,t)|^2 \frac{dy \, dt}{t^{n+1}}w(x) dx\right)^{\frac{1}{2}}
\\
\nonumber &=\alpha^{\frac{nr_0}{2}}\|\mathcal{A}F\|_{L^2(w)}.
\end{align}

We shall extrapolate from this inequality. To set the stage, take an arbitrary $1\le r_0<\infty$ and consider $\mathcal{F}$ the family of pairs
$
(f,g)=\big((\mathcal{A}^{\alpha}F)^\frac{2}{r_0}, \alpha^n\,(\mathcal{A}F)^\frac{2}{r_0}\big)$. Notice that \eqref{Change-p=2:Ar} immediately gives that for every $w\in A_{r_0}$
$$
\int_{\R^n} f(x)^{r_0}\,w(x)\,dx
=
\int_{\R^n} \mathcal{A}^{\alpha}F(x)^2\,w(x)\,dx
\le
C\,\alpha^{n\,r_0}\,\int_{\R^n} \mathcal{A}F(x)^2\,w(x)\,dx
=
C\,\int_{\R^n} g(x)^{r_0}\,w(x)\,dx,
$$
where $C$ does not depend on $\alpha$. Next, we apply $(a)$ in Lemma \ref{lemma:extrapol} to conclude that for every $1<r<\infty$ and for every $w\in A_{r}$
$$
\int_{\R^n} \mathcal{A}^{\alpha}F(x)^{\frac{2\,r}{r_0}}\,w(x)\,dx
=
\int_{\R^n} f(x)^{r}\,w(x)\,dx
\le
C\,\int_{\R^n} g(x)^{r}\,w(x)\,dx
=
C\,\alpha^{n\,r}\,\int_{\R^n} \mathcal{A}F(x)^{\frac{2\,r}{r_0}}\,w(x)\,dx,
$$
where $C$ does not depend on $\alpha$. From this, using that $1\le r_0<\infty$ is arbitrary, we conclude \eqref{change-alph-1} under the restriction $1<r<\infty$.

To complete the proof it remains to consider the case $r=1$, (i.e., $w\in A_1$) and $0<p<2$. Notice that if $\|\mathcal{A}F\|_{L^p(w)}=\infty$ the inequality follows immediately. So, we can assume that $\|\mathcal{A}F\|_{L^p(w)}<\infty$.

For a fixed $\lambda >0$, set
\begin{align*}
E_{\lambda}:=\{x\in \mathbb{R}^n: \mathcal{A}F(x)\leq \lambda\}, \qquad  O_{\lambda}:=\R^n\backslash E_{\lambda}=\{x\in \mathbb{R}^n: \mathcal{A}F(x)> \lambda\}.
\end{align*}
Then, for each $0<\gamma<1$, we also consider the set of global $\gamma$-density with respect to $E_{\lambda}$ defined by
$$
E_{\lambda}^*:=\left\{x\in \mathbb{R}^n : \frac{|E_{\lambda}\cap B|}{|B|}\geq \gamma, \ \forall B \ \textrm{centered at} \ x \right\}
$$
and denote its complement by
\begin{align}\label{O}
O_{\lambda}^*
=\left\{x\in \mathbb{R}^n:\exists\, r>0 \ \textrm{such that} \ \frac{|O_{\lambda}\cap B(x,r)|}{|B(x,r)|}>1-\gamma\right\}
=
\left\{x\in \mathbb{R}^n:\mathcal{M}(\chi_{O_{\lambda}})(x)>1-\gamma\right\},
\end{align}
where $\mathcal{M}$ is the centered Hardy-Littlewood maximal operator.

Note that if $x_k\to x$ then $\chi_{\Gamma(x_k)}(y,t)\rightarrow \chi_{\Gamma(x)}(y,t)$ for a.e.~$(y,t)\in \R^{n+1}_+$. This and the Fatou Lemma clearly imply that $E_{\lambda}$ is closed. We next show that, for each $0<\gamma<1$, $E_{\lambda}^*$ is a nonempty closed set contained in $E_{\lambda}$.  Notice that
the fact that
$\mathcal{M}:L^1(w)\rightarrow L^{1,\infty}(w)$, since $w\in A_{1}$, and our earlier assumption ($\|\mathcal{A}F\|_{L^p(w)}<\infty$) give
$$
w(O_{\lambda}^*)=w\left(\left\{x\in \mathbb{R}^n:\mathcal{M}(\chi_{O_{\lambda}})(x)>1-\gamma\right\}\right)\lesssim
\frac{1}{1-\gamma}w(O_{\lambda})
\le
\frac{1}{(1-\gamma)\,\lambda^p}\,\|\mathcal{A}F\|_{L^p(w)}^p
<\infty.
$$
This immediately implies that $E_{\lambda}^*$ cannot be  empty.

Next, we see that $E_{\lambda}^*\subset E_{\lambda}$, for all $0<\gamma<1$. This follows from the fact that $E_{\lambda}$ is closed: if
$x\not \in E_{\lambda}$, there exists $r>0$ such that $B(x,r)\cap E_{\lambda}=\emptyset$, and then $x\notin E_{\lambda}^*$.

 \medskip

Finally, we show that $E_{\lambda}^*$ is closed. Let $\{x_k\}_k\subset E_{\lambda}^*$ be such that $x_k
\rightarrow  x$. Take an arbitrary $r>0$ and define the functions  $f_{k}=\chi_{E_{\lambda}\cap B(x_k,r)}$ which satisfy $f_{k}\rightarrow \chi_{E_{\lambda}\cap B(x,r)}$ a.e. in $\R^n$. Note also that for $k$ large enough $f_{k}\leq \chi_{B(x,2r)}$ (since $x_k\in B(x,r)$). Thus, by the Dominated Convergence Theorem, we conclude that
\begin{align*}
|E_{\lambda}\cap B(x,r)|
=
\lim_{k\rightarrow \infty}\int_{\mathbb{R}^n}f_{k}(y) \, dy
=
\lim_{k\rightarrow \infty}
|E_{\lambda}\cap B(x_k,r)|.
\end{align*}
On the other hand, since $x_k\in E_{\lambda}^*$ we have that $|E_{\lambda}\cap B(x_k,r)|\ge \gamma|B(x_k,r)|=\gamma|B(x,r)|$. This in turn implies that for every  $r>0$
$$
\frac{|E_{\lambda}\cap B(x,r)|}{|B(x,r)|}\geq \gamma,
$$
which yields that $x\in E_{\lambda}^*$ and hence $E_{\lambda}^*$ is closed.

After these preparations, given  $(y,t)\in \mathcal{R}^{\alpha}(E_{\lambda}^*)$, there exists $\bar{x}\in E_{\lambda}^*$ such that $|\bar{x}-y|<\alpha t$. Therefore, for $z=y-\frac{t}{2}\frac{y-\bar{x}}{|y-\bar{x}|}$ we have that $B\left(z,\frac{t}{2}\right)\subset B(\bar{x},\alpha t)\cap B(y,t)$ and
\begin{multline*}
\left|B(\bar{x},\alpha t)\setminus B(y,t)\right|
\leq
\left|B(\bar{x},\alpha t)\setminus B\left(z,\frac{t}{2}\right)\right|
=
\left|B(\bar{x},\alpha t)\right|-\left|B\left(z,\frac{t}{2}\right)\right|
\\
=
\left|B(\bar{x},\alpha t)\right|\left(1-\frac{1}{2^n\alpha^n}\right)
=
c_{\alpha}
\left|B(\bar{x},\alpha t)\right|,
\end{multline*}
with $c_{\alpha}=\left(1-\frac{1}{2^n\alpha^n}\right)<1$. This and the fact that $\bar{x}\in E_{\lambda}^*$ yield
\begin{multline*}
\gamma|B(\bar{x},\alpha t)| \le
|E_{\lambda}\cap B(\bar{x},\alpha t)|
= |E_{\lambda}\cap B(\bar{x},\alpha t)\setminus B(y,t)|+|E_{\lambda}\cap B(\bar{x},\alpha t)\cap B(y,t)|\\
\leq
c_{\alpha}
\left|B(\bar{x},\alpha t)\right| +|E_{\lambda}\cap B(y,t)|.
\end{multline*}
Choosing $\gamma=\frac{1+c_{\alpha}}{2}$ we conclude that
\begin{align}\label{densidad}
|E_{\lambda}\cap B(y,t)|\geq \frac{1}{2^{n+1}\alpha^n}|B(\bar{x},\alpha t)|
=
\frac{1}{2^{n+1}\alpha^n}|B(y,\alpha t)| .
\end{align}
From this and \eqref{pesosineq:Ap}, we have for every  $(y,t)\in \mathcal{R}^{\alpha}(E_{\lambda}^*)$,
\begin{align}\label{omega}
\frac{w(E_{\lambda}\cap B(y,t))}{w(B(y,\alpha t))}
\geq
[w]^{-1}_{A_1} \frac{|E_{\lambda}\cap B(y,t)|}{|B(y,\alpha t)|}
\geq
\frac{1}{2^{n+1}\alpha^n[w]_{A_1}}.
\end{align}
We use this to show that
\begin{align}\label{ineq}
\int_{E_{\lambda}^*} \mathcal{A}^{\alpha}F(x)^2 w(x) \, dx
&= \int_{E_{\lambda}^*}\int_{0}^{\infty}\int_{\mathbb{R}^n} |F(y,t)|^2 \chi_{B(0,1)}\left(\frac{x-y}{\alpha t}\right) w(x) \frac{dy \, dt}{t^{n+1}} \, dx
\\
\nonumber &\leq \iint_{\mathcal{R}^{\alpha}(E_{\lambda}^*)} |F(y,t)|^2 \int_{B(y,\alpha t)} w(x) \, dx \frac{dy \, dt}{t^{n+1}}
\\
\nonumber &
\leq 2^{n+1}\alpha^n[w]_{A_1}\iint_{\mathcal{R}^{\alpha}(E_{\lambda}^*)} |F(y,t)|^2 \int_{B(y, t)\cap E_{\lambda}} w(x) \, dx \frac{dy \, dt}{t^{n+1}}
\\
\nonumber &
\leq 2^{n+1}\alpha^n[w]_{A_1}\int_{E_{\lambda}} \mathcal{A}F(x)^2 w(x) \, dx.
\end{align}
 Therefore, from \eqref{ineq}, \eqref{O}, and the fact that $\mathcal{M}:L^{1}(w)\rightarrow L^{1,\infty}(w)$ (because $w \in A_{1}$), we obtain
\begin{align*}
w(\{x: \mathcal{A}^{\alpha}F(x)> \lambda\})&\leq w(\{x\in O_{\lambda}^*:\mathcal{A}^{\alpha}F(x)> \lambda\})+w(\{x\in E_{\lambda}^*: \mathcal{A}^{\alpha}F(x)> \lambda\})
\\
&
\leq w(\{x: \mathcal{M}(\chi_{O_{\lambda}})(x)> 1-\gamma\}) + \frac{1}{\lambda^2}\int_{E_{\lambda}^*}\mathcal{A}^{\alpha}F(x)^{2} w(x) \, dx
\\
&
\lesssim \alpha^n [w]_{A_1}w(O_{\lambda}) + \alpha^n [w]_{A_1} \frac{1}{\lambda^2}\int_{E_{\lambda}}\mathcal{A}F(x)^2 w(x) \, dx
\\
&
=
\alpha^n [w]_{A_1}w(\{x: \mathcal{A}F(x)>\lambda\}) +\alpha^n [w]_{A_1}\frac{1}{\lambda^2}\int_{E_{\lambda}}\mathcal{A}F(x)^2 w(x) \, dx.
\end{align*}
Using this and that $0<p<2$ it follows that
\begin{align*}
&\|\mathcal{A}^{\alpha}F\|_{L^p(w)}^p
=
\int_{0}^{\infty}p\,\lambda^{p} \, w(\{x: \mathcal{A}^{\alpha}F(x)> \lambda\})\, \frac{d\lambda}{\lambda}
\\
&
\quad\lesssim
\alpha^n [w]_{A_1}\bigg(\int_{0}^{\infty}p\,\lambda^{p} \, w(\{x: \mathcal{A}F(x)> \lambda\})\, \frac{d\lambda}{\lambda}
+
\int_{0}^{\infty}p\lambda^{p-2}\int_{E_{\lambda}} \mathcal{A}F(x)^2w(x) \, dx \, \frac{d\lambda}{\lambda}\bigg)
\\
&
\quad\leq
\alpha^n [w]_{A_1}
\bigg(
\|\mathcal{A}F\|_{L^p(w)}^p
+
\int_{\mathbb{R}^n} \mathcal{A}F(x)^2\int_{\mathcal{A}F(x)}^{\infty}p\lambda^{p-2} \, \frac{d\lambda}{\lambda} w(x) \, dx\bigg)\\
&
\quad=
C\,\alpha^n[w]_{A_1} \|\mathcal{A}F\|_{L^p(w)}^p.
\end{align*}
This completes the proof of $(i)$.
\end{proof}

\begin{proof}[Proof of Proposition \ref{prop:alpha}, part $(ii)$]
As before, we can reduce matters to showing that for every $\alpha\ge 1$ and for every $w\in RH_{s'}$, $1\le s<\infty$, there holds
\begin{equation}\label{change-alph-2}
\|\mathcal{A}F\|_{L^p(w)}\
\leq
C \alpha ^{-\frac{n}{sp}}
 \|\mathcal{A}^\alpha F\|_{L^p(w)}, \quad \textrm{for all} \quad \frac{2}{s}\leq p<\infty.
\end{equation}
We show this estimate considering three cases: $p=2$ and $1\leq s<\infty$,
$2/s\leq p<\infty$ and $1<s<\infty$, and  $s=1$ and $2<p<\infty$.

We start by taking $p=2$ and $w\in RH_{s_0'}$ with $1\leq s_0<\infty$. We proceed as in \eqref{Change-p=2:Ar} and use \eqref{pesosineq:RHq}  to obtain
\begin{multline}\label{Change-p=2:RHs}
\|\mathcal{A}F\|_{L^2(w)}
=
\left(\int_{\mathbb{R}^n}\int_0^{\infty}|F(y,t)|^2 w(B(y, t)) \frac{dy \, dt}{t^{n+1}}\right)^{\frac{1}{2}}
\\
\lesssim \alpha^{-\frac{n}{2s_0}}
\left(\int_{\mathbb{R}^n}\int_0^{\infty}|F(y,t)|^2 w(B(y,\alpha t)) \frac{dy \, dt}{t^{n+1}}\right)^{\frac{1}{2}}
=\alpha^{-\frac{n}{2s_0}}\|\mathcal{A}^{\alpha} F\|_{L^2(w)}.
\end{multline}

For the second case we shall extrapolate from \eqref{Change-p=2:RHs}. Take an arbitrary $1\le s_0<\infty$ and consider $\mathcal{F}$ the family of pairs
$
(f,g)=\big((\mathcal{A}F)^{2\,s_0}, \alpha^{-n}\,(\mathcal{A}^{\alpha} F)^{2\,s_0}\big)$. Notice that \eqref{Change-p=2:RHs} immediately gives that, for every $w\in RH_{s_0'}$,
$$
\int_{\R^n} f(x)^\frac1{s_0}\,w(x)\,dx
=
\int_{\R^n} \mathcal{A}F(x)^2\,w(x)\,dx
\le
C\,\alpha^{-\frac{n}{s_0}}\,\int_{\R^n} \mathcal{A}^\alpha F(x)^2\,w(x)\,dx
=
C\,\int_{\R^n} g(x)^\frac1{s_0}\,w(x)\,dx,
$$
where $C$ does not depend on $\alpha$. Next, we apply $(b)$ in Lemma \ref{lemma:extrapol} to conclude that, for every $1<s<\infty$ and for every $w\in RH_{s'}$,
$$
\int_{\R^n} \mathcal{A}F(x)^{\frac{2\,s_0}{s}}\,w(x)\,dx
=
\int_{\R^n} f(x)^{\frac1{s}}\,w(x)\,dx
\le
C\,\int_{\R^n} g(x)^{\frac1{s}}\,w(x)\,dx
=
C\,\alpha^{-\frac{n}{s}}\,\int_{\R^n} \mathcal{A}^{\alpha}  F(x)^{\frac{2\,s_0}{s}}\,w(x)\,dx,
$$
where $C$ does not depend on $\alpha$. From this, using that $1\le s_0<\infty$ is arbitrary we conclude \eqref{change-alph-2} under the restriction $1<s<\infty$.

Finally, we show \eqref{change-alph-2} for all $2<p<\infty$ and $w\in RH_{\infty}$ (i.e., $s=1$).
Without loss of generality, we may assume that $\alpha>32$ (for $1\le \alpha\le 32$ we just use that $\mathcal{A}F\le \mathcal{A}^\alpha F$). Let us also assume that $\|\mathcal{A}^\alpha F\|_{L^p(w)}<\infty$. Otherwise, there is nothing to prove. Besides, since $w\in RH_{\infty} $ there exists $r>1$, which can be assumed to satisfy $r\ge p/2$, such that $w\in A_{r}$. Then we can apply part $(i)$ with $\beta=6\sqrt{n}\alpha$ and obtain that
\begin{equation}\label{A-A}
\|\mathcal{A}^{6\sqrt{n}\alpha}F\|_{L^p(w)}
\le
C
\left(\frac{6\sqrt{n}\alpha}{\alpha}\right)^{\frac{n\,r}{p}}
\|\mathcal{A}^{\alpha}F\|_{L^p(w)}
=
C\|\mathcal{A}^{\alpha}F\|_{L^p(w)}<\infty,
\end{equation}
where $C$ does not depend on $\alpha$.

After these observations, for every $\lambda>0$, consider the set
$$
O_{\lambda}:=\{x\in \mathbb{R}^n:\mathcal{A}^{6\sqrt{n}\alpha}F(x)>\lambda\}.
$$
We shall show that
\begin{align}\label{AF-Awe}
w(\{x\in \mathbb{R}^n : \mathcal{A}F(x)>2\lambda\})
\lesssim
\frac{\alpha^{-n}}{\lambda^2}\int_{O_{\lambda}}|\mathcal{A}^{6\sqrt{n}\alpha}F(x)|^2w(x)dx.
\end{align}
Note that the previous estimate is trivial when $O_\lambda=\emptyset$: both sides vanish since $\mathcal{A}F\leq \mathcal{A}^{6\sqrt{n}\alpha}F$. We may then assume that  $O_\lambda\neq\emptyset$. From the arguments in the proof of $(i)$ we clearly have that $O_\lambda$ is open. Also \eqref{A-A} and Chebychev's inequality give that $w(O_\lambda)<\infty$, which in turn yields that $O_\lambda\subsetneq \R^n$.  We can then take a Whitney decomposition of $O_{\lambda}$ (cf. \cite[Chapter VI]{St70}): there exists a family of  closed cubes  $\{Q_j\}_{j\in \N}$ with disjoint interiors so that
\begin{equation}\label{Whitney}
O_{\lambda}
=
\bigcup_{j\in \N}Q_j,  \qquad \textrm{diam}(Q_j)\leq d(Q_j,\R^n\setminus O_{\lambda})\leq 4 \textrm{diam}(Q_j),
\qquad
\sum_j \chi_{Q_j^*}\le 12^n\,\chi_{O_\lambda},
\end{equation}
where $Q_j^*:=\frac98Q_j$.

On the other hand, since $\mathcal{A}F\leq \mathcal{A}^{6\sqrt{n}\alpha}F$, we have that
\begin{align}\label{rhwhi}
w(\{x\in \mathbb{R}^n : \mathcal{A}F(x)>2\lambda\})
= w(\{x\in O_\lambda: \mathcal{A}F(x)>2\lambda\})
=\sum_{j\in \N}
w(\{x\in Q_j : \mathcal{A}F(x)>2\lambda\}).
\end{align}
Fix $j\in\N$ and, for every $x\in Q_j$, write
$$
\mathcal{A}F(x)
\le
G_j(x)+H_j(x)
:=
\left(\int_{\frac{\ell(Q_j)}{\alpha}}^{\infty}\int_{B(x,t)}|F(y,t)|^2\frac{dy \, dt}{t^{n+1}}\right)^{\frac{1}{2}}
+
\left(\int_{0}^{\frac{\ell(Q_j)}{\alpha}}\int_{B(x,t)}|F(y,t)|^2\frac{dy \, dt}{t^{n+1}}\right)^{\frac{1}{2}}.
$$
Pick $x_j\in \mathbb{R}^n\setminus O_{\lambda}$ such that $d(x_j,Q_j)\leq 4 \textrm{diam}(Q_j)$. Notice that for every $x\in Q_{j}$ and $t\ge \ell(Q_j)/\alpha$ we have that
$B(x,t)\subset B(x_j,6\sqrt{n}\alpha t)$.
Then,
\begin{align*}
G_j(x)^2=\int_{\frac{\ell(Q_j)}{\alpha}}^{\infty}\int_{B(x,t)}|F(y,t)|^2\frac{dy \, dt}{t^{n+1}}
\leq
\int_{\frac{\ell(Q_j)}{\alpha}}^{\infty}\int_{B(x_j,6\sqrt{n}\alpha t)}|F(y,t)|^2\frac{dy \, dt}{t^{n+1}}
\leq \mathcal{A}^{6\sqrt{n}\alpha}F(x_j)^2\leq \lambda^2,
\end{align*}
where we have used that $x_j\in \mathbb{R}^n\setminus O_{\lambda}$ in the last inequality. Using this and that $w\in RH_{\infty}$, we have
\begin{align*}
w(\{x\in Q_j : \mathcal{A}F(x)>2\lambda\})
&\le
w(\{x\in Q_j : H_j(x)>\lambda\})
\\
&
\leq
\frac{1}{\lambda^2}\int_{Q_j} H_j(x)^2w(x)dx\\
&\leq
\frac{1}{\lambda^2} \iint_{\mathcal{R}(Q_j)}\chi_{(0,\alpha^{-1}\ell(Q_j))}(t)|F(y,t)|^2 w(B(y,t))\frac{dy \, dt}{t^{n+1}}
\\
&\lesssim
\frac{\alpha^{-n}}{\lambda^2}\iint_{\mathcal{R}(Q_j)}\chi_{(0,\alpha^{-1}\ell(Q_j))}(t)|F(y,t)|^2 w(B(y,32^{-1}\alpha t))\frac{dy \, dt}{t^{n+1}}
\\
&\leq
\frac{\alpha^{-n}}{\lambda^2}\int_{Q_j^*}\int_0^{\infty}\int_{B(x,32^{-1}\alpha t)}|F(y,t)|^2 \frac{dy \, dt}{t^{n+1}}w(x)dx
\\
&\leq
\frac{\alpha^{-n}}{\lambda^2}\int_{Q_j^*}\mathcal{A}^{6\sqrt{n}\alpha}F(x)^2w(x)dx.
\end{align*}
Then, by  \eqref{rhwhi} and the bounded overlap of the family $\{Q_j^*\}_{j\in\N}$, we conclude \eqref{AF-Awe}:
\begin{align*}
w(\{x\in \mathbb{R}^n : \mathcal{A}F(x)>2\lambda\})
\lesssim
\frac{\alpha^{-n}}{\lambda^2}\sum_{j\in \N}\int_{Q_j^*}|\mathcal{A}^{6\sqrt{n}\alpha}F(x)|^2w(x)dx
\lesssim
\frac{\alpha^{-n}}{\lambda^2}\int_{O_{\lambda}}|\mathcal{A}^{6\sqrt{n}\alpha}F(x)|^2w(x)dx.
\end{align*}
This, the fact that $2<p<\infty$, and \eqref{A-A} give
\begin{multline*}
\|\mathcal{A}F\|_{L^p(w)}^p
=
2^p\int_{0}^{\infty}p\,\lambda^{p} \, w\left\{x: \mathcal{A}F(x)> 2\lambda\right\}\, \frac{d\lambda}{\lambda}
\lesssim
\alpha^{-n}\int_{0}^{\infty}\lambda^{p-2}
\int_{O_{\lambda}}\big(\mathcal{A}^{6\sqrt{n}\alpha}F(x)\big)^2w(x)dx\frac{d\lambda}{\lambda}
\\
\lesssim \alpha^{-n}\int_{\mathbb{R}^n}\big(\mathcal{A}^{6\sqrt{n}\alpha}F(x)\big)^2\int_{0}^{\mathcal{A}^{6\sqrt{n}\alpha}F(x)}\lambda^{p-2}
\frac{d\lambda}{\lambda}w(x)dx
\lesssim \alpha^{-n}\|\mathcal{A}^{6\sqrt{n}\alpha}F\|_{L^p(w)}^p
\lesssim \alpha^{-n}\|\mathcal{A}^{\alpha}F\|_{L^p(w)}^p
.
\end{multline*}
This completes the proof.
\end{proof}

As announced before, we next discuss the sharpness of Proposition \ref{prop:alpha}.
\begin{remark}\label{remark-sharp-coa}
Let us consider the weights $w_{\theta}(x)=|x|^{-\theta}$. It is standard to show that $w_\theta \in A_r$ if and only if $-n(r-1)<\theta<n$ (with the possibility of taking $\theta=0$ when $r=1$). Besides, $w_{\theta}\in RH_{s'}$ if and only if $-\infty<\theta<\frac{n}{s'}$ (with the possibility of taking $\theta=0$ when $s=1$). We shall use this family of weights to show that the exponents obtained in Proposition \ref{prop:alpha} parts $(i)$ and $(ii)$ are sharp.

We proceed as in \cite{Auscherangles}, where the unweighted case was considered. Set $B:=B(0,\frac{1}{4})$ and $a(y,t):=\chi_{B}(y)\chi_{[\frac{1}{2},1]}(t)$. It is straightforward to show that
$$
\mathcal{A} a(x)\leq C \chi_{5B}(x),\quad \forall\, x\in \R^n ,
\qquad \textrm{and}\qquad
\mathcal{A} a(x)\geq C,  \quad \forall\, x\in B,
$$
and, for every $\alpha\ge 1$,
$$
\mathcal{A}^{\alpha}a(x)\leq C \chi_{(4\alpha+1)B}(x), \quad \forall\, x\in \R^n ,
\qquad \textrm{and}\qquad  \mathcal{A}^{\alpha} a(x)\geq C,\quad  \forall\, x\in (2\alpha-1)B.
$$
Hence,
\begin{equation}\label{cog-sharp:a}
\|\mathcal{A} a\|_{L^p(w_{\theta})}\approx 1 \qquad\quad \textrm{and} \qquad\quad
 \|\mathcal{A}^{\alpha} a\|_{L^p(w_{\theta})}\approx \alpha^{\frac{n-\theta}{p}},
\end{equation}
where the implicit constants may depend on $\theta$ but are independent of $\alpha$.

To see that the exponent in part $(i)$ is sharp, assume by way of contradiction, that there exists $0<\varrho<\frac{nr}{p}$ such that for all $\alpha\ge 1$, $w\in A_r$, $1\le r<\infty$, and $0<p\le 2r$ there holds
\begin{equation}\label{cog-sharp1}
\|\mathcal{A}^{\alpha} F\|_{L^p(w)}\leq C_{w} \alpha^{\frac{nr}{p}-\varrho}\|\mathcal{A}F\|_{L^p(w)}.
\end{equation}
Take $1\le r<\infty$ $0<p\le 2r$, and set $\theta:=-n(r-1)+\frac{\varrho p}{2}$. Note that $-n(r-1)<\theta<n$ and therefore $w_\theta\in A_r$. Applying \eqref{cog-sharp:a} and \eqref{cog-sharp1}, there exists $C_\theta$ so that for every $\alpha>1$ there holds
$$
\alpha^{\frac{nr}{p}-\frac{\varrho}2}
=
\alpha^{\frac{n-\theta}{p}}\approx
\|\mathcal{A}^{\alpha} a\|_{L^p(w_{\theta})}
\leq
C_{\theta} \alpha^{\frac{nr}{p}-\varrho}\|\mathcal{A} a\|_{L^p(w_{\theta})}
\approx
C_\theta\,\alpha^{\frac{nr}{p}-\varrho},
$$
where the implicit constants may depend on $\theta$ but are independent of $\alpha$. This clearly leads to a contradiction since $\alpha^{\frac{nr}{p}-\frac{\varrho}2}\gg \alpha^{\frac{nr}{p}-\varrho}$ when $\alpha\to\infty$.

\medskip

We next see that the exponent in part $(ii)$ is sharp. Again we proceed by way of contradiction: let us assume that there exists $\varrho>0$ such that
for all $\alpha\ge 1$, $w\in RH_{s'}$, $1\le s<\infty$, and $\frac2s\le p<\infty$ there holds
\begin{equation}\label{cog-sharp2}
\|\mathcal{A}F\|_{L^p(w)}\leq C_{w} \alpha^{-\frac{n}{sp}-\varrho}\|\mathcal{A}^\alpha F\|_{L^p(w)}.
\end{equation}
Take $1\le s<\infty$, $\frac2s\le p<\infty$, and pick $\theta:=\frac{n}{s'}-\frac{\varrho p}{2}$. Observe that $-\infty <\theta<\frac{n}{s'}$ and therefore $w_\theta\in RH_{s'}$. Applying \eqref{cog-sharp:a} and \eqref{cog-sharp2}, there exists $C_\theta$ so that for every $\alpha>1$ there holds
$$
1
\approx
\|\mathcal{A} a\|_{L^p(w_{\theta})}
\leq
C_{\theta} \alpha^{-\frac{n}{sp}-\varrho}\|\mathcal{A}^{\alpha} a\|_{L^p(w_{\theta})}
\approx
C_\theta\, \alpha^{-\frac{n}{sp}-\varrho+\frac{n-\theta}{p}}
=
C_\theta\, \alpha^{-\frac{\varrho}{2}},
$$
where the implicit constants may depend on $\theta$ but are independent of $\alpha$. Note that the right-hand side tends to $0$ as $\alpha\to\infty$ and this readily leads to a contradiction.
\end{remark}

Proposition \ref{prop:alpha} gives us a way to compare the norms of $\mathcal{A}^\alpha F$ in $L^p(w)$ for different angles $\alpha$. In that result, the emphasis is on the class of weights: fixed a class of weights ($A_r$ in $(a)$ or $RH_{s'}$ in $(b)$), we estimate the change of angles in $L^p(w)$ for some range of $p$'s. In some other situations it may be interesting to give formulas where the emphasis is on the exponent $p$. This is contained in the following result whose elementary proof  follows from Proposition \ref{prop:alpha} and  is left to the interested reader:

\begin{proposition}
Let $w\in A_{\infty}$, $0< \alpha\leq \beta<\infty$ and $0<p<\infty$. There hold:
\begin{list}{$(\theenumi)$}{\usecounter{enumi}\leftmargin=1cm
\labelwidth=1cm\itemsep=0.2cm\topsep=.2cm
\renewcommand{\theenumi}{\roman{enumi}}}

\item
 $\displaystyle \|\mathcal{A}^{\beta}F\|_{L^p(w)}\leq C\left(\frac{\beta}{\alpha}\right)^{\frac{nr}{p}}
 \|\mathcal{A}^{\alpha} F\|_{L^p(w)}$, for $r>\max\{\frac{p}{2},r_w\}$, and for $r=\max\{\frac{p}{2},r_w\}$ if $r_w<\frac{p}{2}$ or $w\in A_1$.

\item $\displaystyle \|\mathcal{A}^{\alpha} F\|_{L^p(w)}\leq
C\left(\frac{\alpha}{\beta}\right)^{\frac{n}{sp}}\|\mathcal{A}^{\beta}F\|_{L^p(w)}$, for $\frac{1}{s}<\min\{\frac{p}{2},\frac{1}{s_w}\}$, and for $\frac{1}{s}=\min\{\frac{p}{2},\frac{1}{s_w}\}$ if $\frac{p}{2}<\frac{1}{s_w}$ or $w\in RH_{\infty}$.

\end{list}

\end{proposition}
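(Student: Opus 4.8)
The plan is to derive both inequalities directly from Proposition~\ref{prop:alpha}: the present statement merely rephrases that result with the emphasis placed on the exponent $p$ rather than on the weight class, so there is nothing to prove beyond choosing, for the given $w\in A_\infty$ and $0<p<\infty$, an admissible value of the parameter $r$ (respectively $s$) for which the hypotheses of Proposition~\ref{prop:alpha} are met. The two elementary facts I will use repeatedly are: for $w\in A_\infty$ one has $w\in A_r$ for every $r>r_w$, and if moreover $w\in A_1$ then $w\in A_{r_w}=A_1$; dually, $w\in RH_{s'}$ for every $s>s_w$, and if $w\in RH_\infty$ then $w\in RH_{s'}$ with $s=s_w=1$. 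Both are immediate from the nesting relations in Proposition~\ref{prop:weights} together with the definitions \eqref{rw} of $r_w$ and $s_w$.

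For $(i)$, fix $r$ as in the statement; I would check that $r\in[1,\infty)$, that $w\in A_r$, and that $0<p\le 2r$, and then invoke Proposition~\ref{prop:alpha}$(i)$, which yields exactly the claimed bound. If $r>\max\{p/2,r_w\}$, then $r>r_w\ge1$ gives $r\in(1,\infty)$ and $w\in A_r$, while $r>p/2$ gives $p<2r$. If $r=\max\{p/2,r_w\}$ with $r_w<p/2$, then $r=p/2>r_w\ge1$, so $w\in A_{p/2}=A_r$ and $2r=p$. If $r=\max\{p/2,r_w\}$ with $w\in A_1$ (so $r_w=1$): when $p\le2$ we have $r=1$, $w\in A_1=A_r$ and $2r=2\ge p$; when $p>2$ we have $r=p/2>1=r_w$, so $w\in A_1\subset A_{p/2}=A_r$ and $2r=p$. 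In every case the hypotheses of Proposition~\ref{prop:alpha}$(i)$ are satisfied.

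For $(ii)$, fix $s$ as in the statement; here Proposition~\ref{prop:alpha}$(ii)$ requires $s\in[1,\infty)$, $w\in RH_{s'}$, and $2/s\le p$, i.e.\ $1/s\le p/2$. If $1/s<\min\{p/2,1/s_w\}$, then $1/s<1/s_w\le1$ forces $s\in(1,\infty)$, $1/s<1/s_w$ gives $w\in RH_{s'}$, and $1/s<p/2$ gives $2/s<p$. If $1/s=\min\{p/2,1/s_w\}$ with $p/2<1/s_w$, then $1/s=p/2$, so $s=2/p$ and $2/s=p$; moreover $p/2<1/s_w\le1$ forces $p<2$, hence $s>1$, and $1/s=p/2<1/s_w$ gives $w\in RH_{s'}$. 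If $1/s=\min\{p/2,1/s_w\}$ with $w\in RH_\infty$ (so $s_w=1$): when $p\le2$ we have $1/s=p/2$, $s=2/p\ge1$, $w\in RH_\infty\subset RH_{s'}$ and $2/s=p$; when $p>2$ we have $1/s=1$, $s=1$, $w\in RH_\infty=RH_{s'}$ and $2/s=2\le p$. In every case Proposition~\ref{prop:alpha}$(ii)$ applies, completing the argument.

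The only mildly delicate point, and hence the main thing to get right, is the bookkeeping at the endpoint exponents: one must confirm that the borderline choice $r=\max\{p/2,r_w\}$ (respectively $s$ with $1/s=\min\{p/2,1/s_w\}$) still lies in $[1,\infty)$ and that the required $A_r$ (respectively $RH_{s'}$) membership persists there. This is precisely what the side conditions ``$r_w<p/2$ or $w\in A_1$'' and ``$p/2<1/s_w$ or $w\in RH_\infty$'' are designed to ensure—either the strict inequality $r>r_w$ (resp.\ $s>s_w$) still holds, or the infimum defining $r_w$ (resp.\ $s_w$) is attained at the relevant class—so no quantitative estimate beyond Proposition~\ref{prop:alpha} is ever needed.
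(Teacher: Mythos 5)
Your proposal is correct and follows exactly the route the paper intends: the paper itself states that the ``elementary proof follows from Proposition~\ref{prop:alpha} and is left to the interested reader,'' and your case-by-case verification that the chosen $r$ (resp.\ $s$) satisfies $r\in[1,\infty)$, $w\in A_r$, $p\le 2r$ (resp.\ $s\in[1,\infty)$, $w\in RH_{s'}$, $2/s\le p$) is precisely the bookkeeping required. The endpoint analysis correctly identifies why the side conditions ``$r_w<p/2$ or $w\in A_1$'' and ``$p/2<1/s_w$ or $w\in RH_\infty$'' are needed, namely to guarantee the relevant weight-class membership when the infimum defining $r_w$ or $s_w$ would otherwise fail to be attained.
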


\bigskip


Related to the change of angles, we establish the following result, which will be required in the proof of Theorem \ref{theor:control-SF-Poisson}.

\medskip

\begin{proposition}\label{prop:Q}
Let  $1\le q \leq s<\infty$, $w\in RH_{s'}$, and $0\leq \alpha\leq 1$. Then, for every $t>0$, we have
\begin{align}\label{G-alpha}
\int_{\mathbb{R}^n}\left(\int_{B(x,\alpha t)}|h(y,t)| \, dy \right)^{\frac{1}{q}}w(x)dx\lesssim \alpha^{\frac{n}{s}}
\int_{\mathbb{R}^n}\left(\int_{B(x, t)}|h(y,t)| \, dy \right)^{\frac{1}{q}}w(x)dx.
\end{align}
\end{proposition}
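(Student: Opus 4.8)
The plan is to deduce \eqref{G-alpha} from the change of angles estimate in Proposition~\ref{prop:alpha}$(ii)$, the key observation being that the fixed--scale average $\bigl(\int_{B(x,\alpha t)}|h(y,t)|\,dy\bigr)^{1/2}$ can be realized, up to comparable radii, as $\mathcal{A}^{\alpha}F(x)$ for a function $F$ concentrated on a single vertical dyadic window. First I would dispose of the trivial cases: if $\alpha=0$ the left--hand side of \eqref{G-alpha} vanishes, and if $\tfrac12<\alpha\le 1$ the estimate is immediate from the inclusion $B(x,\alpha t)\subset B(x,t)$ together with $\alpha^{n/s}\ge 2^{-n/s}$. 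Hence it suffices to treat $0<\alpha\le\tfrac12$.

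Fixing $t>0$, writing $G:=|h(\cdot,t)|\ge 0$, and fixing an auxiliary radius $R>0$ (to be chosen equal to $t/2$), I would set
\[
F(y,\tau):=\Bigl(\frac{\tau^{n+1}}{R}\,G(y)\,\chi_{[R,2R)}(\tau)\Bigr)^{1/2},\qquad (y,\tau)\in\mathbb{R}^{n+1}_+ .
\]
From the definition \eqref{AF} one computes, for every aperture $\gamma>0$,
\[
\mathcal{A}^{\gamma}F(x)^2=\frac1R\int_R^{2R}\Bigl(\int_{B(x,\gamma\tau)}G(y)\,dy\Bigr)\,d\tau ,
\]
and since $\tau\mapsto\int_{B(x,\gamma\tau)}G$ is nondecreasing while the window $[R,2R)$ has length $R$, this average lies between its endpoint values, so that
\[
\int_{B(x,\gamma R)}G(y)\,dy\ \le\ \mathcal{A}^{\gamma}F(x)^2\ \le\ \int_{B(x,2\gamma R)}G(y)\,dy .
\]
In particular $\mathcal{A}^{\alpha}F(x)^2\ge\int_{B(x,\alpha R)}G$ while $\mathcal{A}^{1}F(x)^2\le\int_{B(x,2R)}G$.

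Next I would apply Proposition~\ref{prop:alpha}$(ii)$ with $\beta=1$ and exponent $p:=2/q$; this is legitimate because $1\le q\le s<\infty$ forces $p\in[2/s,2]$, so $p\ge 2/s$, and $w\in RH_{s'}$. Raising the ensuing inequality $\|\mathcal{A}^{\alpha}F\|_{L^p(w)}\le C\alpha^{n/(sp)}\|\mathcal{A}^{1}F\|_{L^p(w)}$ (with $C$ independent of $\alpha$) to the power $p$ and inserting the two pointwise bounds above, using that $a\mapsto a^{1/q}$ is increasing, yields
\[
\int_{\mathbb{R}^n}\Bigl(\int_{B(x,\alpha R)}G(y)\,dy\Bigr)^{1/q}w(x)\,dx\ \le\ C\alpha^{n/s}\int_{\mathbb{R}^n}\Bigl(\int_{B(x,2R)}G(y)\,dy\Bigr)^{1/q}w(x)\,dx .
\]
Finally, taking $R:=t/2$ and replacing $\alpha$ by $2\alpha$ (allowed since $2\alpha\le 1$ in the range $0<\alpha\le\tfrac12$) turns this into exactly \eqref{G-alpha} with $G=|h(\cdot,t)|$, up to the harmless extra factor $2^{n/s}$.

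The step I expect to be the main obstacle is the first one: recognizing that $\int_{B(x,\alpha t)}|h(y,t)|\,dy$ is, up to comparable radii, a conical quantity, so that the change--of--angles machinery of Proposition~\ref{prop:alpha} can be invoked at the exponent $p=2/q$ (which is why the hypothesis $q\le s$ is precisely what makes $p\ge 2/s$). Everything after that is routine; the only delicate bookkeeping point is the exact matching of radii, since concentrating $F$ on $[R,2R)$ rather than on a single scale costs a factor $2$, forcing the aperture to be corrected by $2$ and the range $\alpha\le\tfrac12$ to be isolated at the outset.
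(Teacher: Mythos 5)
Your proof is correct, and it takes a genuinely different route from the paper's. The paper first establishes the case $q=1$ directly from the property $w(B(y,\alpha t))\lesssim \alpha^{n/s_0}w(B(y,t))$ (which is \eqref{pesosineq:RHq}), and then runs the reverse--H\"older extrapolation Lemma~\ref{lemma:extrapol}$(b)$ on the pairs $\bigl(G^{\alpha}(\cdot,t)^{qs_0},\alpha^{n}G(\cdot,t)^{qs_0}\bigr)$ to reach general $q$, choosing $s_0=s/q$. You instead concentrate the datum $|h(\cdot,t)|$ on a dyadic time window $[R,2R)$ so that the fixed--scale average becomes a genuine tent quantity $\mathcal{A}^{\gamma}F(x)^{2}$, at the cost of a controlled distortion of the radius by a factor $2$, and then apply the change--of--angle estimate Proposition~\ref{prop:alpha}$(ii)$ once, at the single exponent $p=2/q$; the hypothesis $q\le s$ is exactly what guarantees $p\ge 2/s$, and the constant in Proposition~\ref{prop:alpha}$(ii)$ is uniform in $\alpha$, which you correctly rely on. Both arguments ultimately rest on extrapolation (Proposition~\ref{prop:alpha}$(ii)$ was itself proved by Lemma~\ref{lemma:extrapol}$(b)$), but yours avoids running an extrapolation argument afresh and treats all $q$ at once; the paper's is more self--contained at the cost of a separate $q=1$ base case. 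The bookkeeping you flag (the factor $2$ from the window length, the split at $\alpha=1/2$) is handled correctly: on $1/2<\alpha\le 1$ monotonicity of the integral plus $\alpha^{n/s}\ge 2^{-n/s}$ gives the bound trivially, and on $0<\alpha\le1/2$ the substitution $\alpha\mapsto 2\alpha$ stays within the admissible range of Proposition~\ref{prop:alpha}$(ii)$.
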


\begin{proof}
We fix $t>0$, $0< \alpha \le 1$, and $1\le q<\infty$. Set
$$
G^{\alpha}(x,t):=\left(\int_{B(x,\alpha t)}|h(y,t)| \, dy\right)^{\frac{1}{q}}.
$$
For $\alpha=1$, we simply write $G(x,t)$.
Then, from \eqref{pesosineq:RHq}, for all  $1\le s_0<\infty$ and $w\in RH_{s_0'}$, we have
\begin{multline}\label{G-alpha:extr}
\int_{\mathbb{R}^n}G^{\alpha}(x,t)^{q}w(x)dx
=
\int_{\mathbb{R}^n}|h(y,t)|\, w(B(y,\alpha t))\,dy
\\
\lesssim \alpha^{\frac{n}{s_0}}
\int_{\mathbb{R}^n}|h(y,t)|\,w(B(y,t))\, dy
=
\alpha^{\frac{n}{s_0}}
\int_{\mathbb{R}^n}G(x,t)^{q} \ w(x)dx.
\end{multline}
This gives \eqref{G-alpha} for $q=1$, and thus we may assume that $q>1$.
We shall extrapolate from \eqref{G-alpha:extr}. Take an arbitrary $1\le s_0<\infty$ and consider $\mathcal{F}$ the family of pairs
$
(f,g)=\big( G^\alpha(\cdot,t)^{q\,s_0}, \alpha^{n}\,G(\cdot,t) ^{q\,s_0}\big)$. Notice that \eqref{G-alpha:extr} immediately gives that, for every $w\in RH_{s_0'}$,
$$
\int_{\R^n} f(x)^\frac1{s_0}\,w(x)\,dx
=
\int_{\R^n} G^{\alpha}(x,t)^{q} \,w(x)\,dx
\le
C\,\alpha^{\frac{n}{s_0}}\,\int_{\R^n} G(x,t)^{q} \,w(x)\,dx
=
C\,\int_{\R^n} g(x)^\frac1{s_0}\,w(x)\,dx,
$$
where $C$ does not depend on $\alpha$. Next, we apply $(b)$ in Lemma \ref{lemma:extrapol} to conclude that, for every $1<s<\infty$ and for every $w\in RH_{s'}$,
$$
\int_{\R^n} G^{\alpha}(x,t)^{\frac{q\,s_0}{s}}\,w(x)\,dx
=
\int_{\R^n} f(x)^{\frac1{s}}\,w(x)\,dx
\le
C\,\int_{\R^n} g(x)^{\frac1{s}}\,w(x)\,dx
=
C\,\alpha^{\frac{n}{s}}\,\int_{\R^n} G(x,t)^{\frac{q\,s_0}{s}}\,w(x)\,dx,
$$
where $C$ does not depend on $\alpha$. From this, if $1<q\le s<\infty$ we can take $s_0=s/q$ and conclude \eqref{G-alpha} as desired.
\end{proof}

\subsection{A new version of the Carleson measure condition}
Let us recall the following maximal operator from \cite{CoifmanMeyerStein}
\begin{align*}
\mathcal{C}F(x):=\sup_{B\ni x}\left(\frac{1}{|B|}\int_0^{r_B}\int_{B}|F(y,t)|^2\frac{dy \, dt}{t}\right)^{\frac{1}{2}}.
\end{align*}
Recall that $\mathcal{C} F\in L^\infty(\R^n)$ means that $|F(y,t)|^2\frac{dy \, dt}{t}$ is a Carleson measure in $\R^{n+1}_+$.

Given $0<p<\infty$, we now introduce a new maximal operator
\begin{align}\label{CF}
\mathcal{C}_{p}F(x_0)=\sup_{B\ni x_0}\left(\frac{1}{|B|}\int_{B}\left(\int_{0}^{r_B}\int_{B(x,t)}|F(y,t)|^2 \frac{dy \, dt}{t^{n+1}}\right)^{\frac{p}{2}} \, dx \right)^{\frac{1}{p}},
\end{align}
where the supremum is taken over all balls $B\subset \mathbb{R}^n$  and where $r_B$ denotes the corresponding radius.

This operator is a version of $\mathcal{C}$ which will be very useful for our purposes. Indeed, for $p=2$, we shall see that $\mathcal{C} F \approx \mathcal{C}_2F$.
First, applying Fubini we have
\begin{multline*}
\mathcal{C}_2F(x_0)
=
\sup_{B\ni x_0}\left(\frac{1}{|B|}\int_{B}\int_{0}^{r_B}\int_{B(x,t)}|F(y,t)|^2 \frac{dy \, dt}{t^{n+1}}\, dx \right)^{\frac{1}{2}}
\\
\leq \sup_{B\ni x_0}\left(\frac{1}{|B|}\int_{2B}\int_{0}^{r_B}|F(y,t)|^2\left(\int_{B(y,t)} 1 \, dx\right) \frac{dy \, dt}{t^{n+1}} \right)^{\frac{1}{2}}
\\
\lesssim \sup_{B\ni x_0}\left(\frac{1}{|2B|}\int_0^{2r_B}\int_{2B}|F(y,t)|^2 \frac{dy \, dt}{t} \right)^{\frac{1}{2}}=\mathcal{C} F(x_0).
\end{multline*}
For the reverse inequality, there holds
\begin{multline*}
\mathcal{C} F(x_0)
=\sup_{B\ni x_0}\left(\frac{1}{|B|}\int_0^{r_B}\int_{B}|F(y,t)|^2 \frac{dy \, dt}{t} \right)^{\frac{1}{2}}
\\
\lesssim \sup_{B\ni x_0}\left(\frac{1}{|B|}\int_{0}^{r_B}\int_{B}|F(y,t)|^2\left(\int_{B(y,t)}1 \, dx\right) \frac{dy \, dt}{t^{n+1}} \right)^{\frac{1}{2}}
\\
\lesssim \sup_{B\ni x_0}\left(\frac{1}{|2B|}\int_{2B}\int_{0}^{2r_B}\int_{B(x,t)}|F(y,t)|^2\frac{dy \, dt}{t^{n+1}} \, dx \right)^{\frac{1}{2}}
=
\mathcal{C}_2F(x_0). \end{multline*}

Our next result shows how $\mathcal{C}_{p_0}$ and $\mathcal{A}$ compare to each other. The case $p_0=2$ and $w\equiv 1$ appears in \cite{CoifmanMeyerStein}
and as a result one sees that $\mathcal{A}F$ and $\mathcal{C}F\approx\mathcal{C}_2F$ are comparable in $L^p(\R^n)$ for every $2<p<\infty$. Our result gives comparability of $\mathcal{A}F$ and $\mathcal{C}_{p_0}F$ in the range $p_0<p<\infty$ and, in particular, if $p_0<2$ we can go below $p=2$.

\begin{proposition}\label{prop:maximal}\
\begin{list}{$(\theenumi)$}{\usecounter{enumi}\leftmargin=1cm \labelwidth=1cm\itemsep=0.2cm\topsep=.2cm \renewcommand{\theenumi}{\alph{enumi}}}
\item If\, $0<p_0,p<\infty$, $w \in A_{\infty}$ and $F\in L^2_{\rm loc}(\R^{n+1}_+)$ then
\begin{align*}
\|\mathcal{A}F\|_{L^p(w)}\lesssim \|\mathcal{C}_{p_0}F\|_{L^p(w)}.
\end{align*}

\item If\, $0< p_0< p<\infty$ and $w\in A_{\frac{p}{p_0}}$ then
\begin{align*}
\|\mathcal{C}_{p_0}F\|_{L^p(w)}\lesssim \|\mathcal{A}F\|_{L^p(w)}.
\end{align*}
\end{list}
\end{proposition}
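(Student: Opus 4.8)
The plan is to prove part $(a)$ by a good-$\lambda$ type argument comparing $\mathcal{A}F$ with $\mathcal{C}_{p_0}F$ through the Hardy--Littlewood maximal operator, and part $(b)$ by a direct pointwise comparison combined with a change-of-angle step.

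For part $(a)$, I would first reduce to showing a pointwise bound of the form $\mathcal{A}F(x) \lesssim \big(\mathcal{M}\big((\mathcal{A}^{\eta}F)^{p_0}\big)(x)\big)^{1/p_0}$ for a suitable fixed aperture $\eta>1$, where $\mathcal{M}$ is the Hardy--Littlewood maximal operator. The idea is standard in the tent-space literature: fix $x\in\R^n$ and a point $(y,t)\in\Gamma(x)$; then for any $z$ in a ball $B_z=B(y,t)$ of radius $t$ one has $(y,t)\in\Gamma^{2}(z)$, so the truncated cone contributions over $\Gamma(x)$ at height $\le r_B$ can be averaged over such balls. Splitting $\Gamma(x)$ dyadically in $t$ and, on each annulus, covering by balls of radius comparable to the height, one controls the local piece of $\mathcal{A}F(x)^{p_0}$ (more precisely its $L^{p_0}$-average over a ball) by an average of $\mathcal{C}_{p_0}F$, and sums the geometric series. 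Once the pointwise estimate $\mathcal{A}F \lesssim \big(\mathcal{M}((\mathcal{C}_{p_0}F)^{p_0})\big)^{1/p_0}$ (or with $\mathcal{A}^\eta F$, then absorbing the angle via Proposition \ref{prop:alpha}) is in hand, I would take $L^p(w)$ norms: since $w\in A_\infty$, choose $r$ with $w\in A_{p/(r p_0)}$ when $p/p_0>1$ (and for $p\le p_0$ use that $\mathcal{C}_{p_0}$ already dominates $\mathcal{A}$ pointwise up to angle, which is the easy case), and apply boundedness of $\mathcal{M}$ on $L^{p/p_0}(w)$ from Proposition \ref{prop:weights}$(vii)$. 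A cleaner route, which I would actually follow, is to observe $\mathcal{C}_{p_0}F(x_0)\gtrsim$ the $L^{p_0}$-average over $B$ of the truncated $\mathcal{A}F$, hence $\big(\mathcal{M}((\mathcal{A}_{\rm loc}F)^{p_0})\big)^{1/p_0}\lesssim \mathcal{C}_{p_0}F$ pointwise; combining with the standard fact $\mathcal{A}F\lesssim \big(\mathcal{M}((\mathcal{A}_{\rm loc}F)^{p_0})\big)^{1/p_0}$ closes part $(a)$ for all $0<p<\infty$ and $w\in A_\infty$, since $\mathcal{M}$ is bounded on $L^p(w)$ for $A_\infty$ weights when composed appropriately (taking $0<p_0<$ a small enough exponent so $w\in A_{p/p_0}$).

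For part $(b)$, fix a ball $B$ and estimate the inner integral defining $\mathcal{C}_{p_0}F(x_0)$. For $x\in B$ and $(y,t)$ with $t\le r_B$ and $|x-y|<t$, one has $y\in 2B$ and $(y,t)\in\Gamma(x)$; conversely if $(y,t)\in\Gamma(x)$ with $x\in B$ and $t<r_B$, then $|x_0-y|<r_B+t<2r_B$. Thus the truncated square function over $B$ at aperture $1$ is controlled, after enlarging the aperture to some fixed $\eta$, by $\mathcal{A}^{\eta}F(x)$ restricted to $x\in B$ and with the integrand supported on $y\in CB$. Hence
$$
\frac{1}{|B|}\int_B\Big(\int_0^{r_B}\!\!\int_{B(x,t)}|F(y,t)|^2\frac{dy\,dt}{t^{n+1}}\Big)^{p_0/2}dx
\lesssim \frac{1}{|B|}\int_B \mathcal{A}^{\eta}F(x)^{p_0}\,dx.
$$
Now multiply and divide by $w$: by Hölder with exponent $p/p_0>1$ and the $A_{p/p_0}$ condition on $w$,
$$
\frac{1}{|B|}\int_B \mathcal{A}^{\eta}F(x)^{p_0}dx
\lesssim \Big(\frac{1}{w(B)}\int_B \mathcal{A}^{\eta}F(x)^{p}\,w(x)\,dx\Big)^{p_0/p}\Big(\frac{1}{|B|}\Big(\frac{|B|}{w(B)}\Big)^{-p_0/(p-p_0)}\!\!\cdot\frac{w(B)}{|B|}\cdots\Big),
$$
where the weight factors combine to the $A_{p/p_0}$ characteristic. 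Taking the supremum over $B\ni x_0$ and raising to the power $1/p_0$ gives $\mathcal{C}_{p_0}F(x_0)\lesssim \big(\mathcal{M}_w(\mathcal{A}^{\eta}F^{p})(x_0)\big)^{1/p}$ with $\mathcal{M}_w$ the weighted (uncentered) maximal operator, which is bounded on $L^{p/p_0}(w\,dx)$ — in fact on $L^{r}(w)$ for all $r>1$. Therefore
$$
\|\mathcal{C}_{p_0}F\|_{L^p(w)}\lesssim \|\mathcal{A}^{\eta}F\|_{L^p(w)}\lesssim \|\mathcal{A}F\|_{L^p(w)},
$$
where the last step is Proposition \ref{prop:alpha}$(i)$, legitimate because $w\in A_{p/p_0}\subset A_r$ for some $r$ and $p\le 2r$ can be arranged (or one simply notes $\eta$ is a fixed universal constant so the loss is a harmless multiplicative factor).

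The main obstacle I anticipate is part $(a)$: getting the pointwise domination of $\mathcal{A}F$ by an $L^{p_0}$-averaged quantity when $p_0$ can be less than $2$ is precisely the point where the classical argument (which implicitly uses $p_0=2$ and Fubini as in the $\mathcal{C}\approx\mathcal{C}_2$ computation above) does not apply verbatim. One must carefully organize the dyadic decomposition of the cone and the choice of covering balls so that each slab of $\Gamma(x)$ at height $\sim 2^{-k}r$ contributes, after taking $L^{p_0}$-averages over a ball of that radius, a term controlled by $\mathcal{C}_{p_0}F(x)$ with a summable geometric weight; the subtlety is that for $p_0<2$ one cannot freely interchange the $L^{p_0}$-norm in $x$ with the $L^2$-integration in $(y,t)$, so the argument has to be set up as a genuine pointwise/covering estimate rather than a Fubini identity. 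The aperture enlargement from $1$ to some fixed $\eta$ and its subsequent removal via Proposition \ref{prop:alpha} is the mechanism that makes this work cleanly.
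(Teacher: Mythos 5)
Your proposal misfires in both parts, and the issues are of the same nature: you try to replace genuine good-$\lambda$ / maximal-function mechanics with pointwise bounds or with the wrong maximal operator.

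\textbf{Part $(a)$.} You correctly flag the essential difficulty in your last paragraph, but the ``cleaner route'' you then adopt is precisely the step that cannot be made to work. The claimed ``standard fact'' $\mathcal{A}F \lesssim \big(\mathcal{M}((\mathcal{A}_{\rm loc}F)^{p_0})\big)^{1/p_0}$ is not available: the quantity $\mathcal{C}_{p_0}F$ only records $L^{p_0}$-averages of the \emph{truncated} square function (heights $t<r_B$), so any pointwise maximal bound built from it can only recover the truncated part of $\mathcal{A}F(x)$. The contribution from large heights in the cone $\Gamma(x)$ simply does not appear in $\mathcal{C}_{p_0}F$ and cannot be recovered by a pointwise inequality; this is exactly why the comparison $\mathcal{A}F \lesssim \mathcal{C}F$ fails pointwise and why the $L^p$ comparison in \cite{CoifmanMeyerStein} requires a level-set argument. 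The paper's proof runs the good-$\lambda$ inequality
$$
w(\{\mathcal{A}F>2\lambda,\ \mathcal{C}_{p_0}F\le\gamma\lambda\})
\le c\,\gamma^{c_w}\,w(\{\mathcal{A}^{\alpha}F>\lambda\}),
$$
proving first the unweighted analogue $|\{x\in Q_j:\ \mathcal{A}F>2\lambda,\ \mathcal{C}_{p_0}F\le\gamma\lambda\}|\le c\gamma^{p_0}|Q_j|$ on each Whitney cube $Q_j$ of the open set $O_\lambda=\{\mathcal{A}^{\alpha}F>\lambda\}$. On $Q_j$ one splits $F=F_{1,j}+F_{2,j}$ by height at $r_{B_j}$; the large-height piece is controlled by $\mathcal{A}^{\alpha}F(x_j)\le\lambda$ for a nearby point $x_j\notin O_\lambda$ (this is the step that has no pointwise substitute), and the small-height piece is controlled by $\mathcal{C}_{p_0}F(\bar x_j)\le\gamma\lambda$ via Chebychev at exponent $p_0$. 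Then $w\in A_\infty$ passes the unweighted estimate to the weighted one, Proposition \ref{prop:alpha} absorbs $\mathcal{A}^{\alpha}$, and a qualitative truncation ($\supp F\subset K_N$) makes $\|\mathcal{A}F\|_{L^p(w)}<\infty$ so the term can be hidden. Your write-up names the good-$\lambda$ approach but abandons it; without it there is a genuine gap.

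\textbf{Part $(b)$.} Your H\"older-with-weight manipulation leads to $(\mathcal{C}_{p_0}F)^p\lesssim \mathcal{M}_w\big((\mathcal{A}^\eta F)^p\big)$, and the final step would then require $\mathcal{M}_w$ to be bounded on $L^1(w\,dx)$, which is false ($\mathcal{M}_w$ is only weak $(1,1)$ with respect to $w\,dx$). Also the aperture enlargement is unnecessary: since $\int_0^{r_B}\int_{B(x,t)}|F|^2\,\frac{dy\,dt}{t^{n+1}}\le \mathcal{A}F(x)^2$ for every ball $B\ni x_0$, one has directly the pointwise bound $\mathcal{C}_{p_0}F(x_0)\le \mathcal{M}_{p_0}(\mathcal{A}F)(x_0)$ with $\mathcal{M}_{p_0}h:=\mathcal{M}(|h|^{p_0})^{1/p_0}$, and then $\mathcal{M}_{p_0}$ is bounded on $L^p(w)$ because $w\in A_{p/p_0}$ and $p>p_0$ (Proposition \ref{prop:weights}$(vii)$). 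That two-line argument is the paper's proof; it avoids both the weighted maximal operator and the aperture change.
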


\begin{proof}
The proof of $(a)$ uses a good-$\lambda$ argument. Then, it requires to know that the quantity to be hidden is a priori finite. To guarantee this we divide the proof into two steps. The first step consists in proving $(a)$ for all  $F\in L^2(\R^{n+1}_+)$ such that, for some $N>1$, $\supp F\subset K_N:=\chi_{B(0,N)}(y)\chi_{(N^{-1},N)}(t)$. In the second step we shall consider general functions $F\in L^2_{\rm loc}(\R^{n+1}_+)$ and define, $F_N:=F\chi_{K_N}$, $N\ge 1$. Clearly $F_N\in L^2(\R^{n+1}_+)$ and $\supp F_N\subset K_N$, and hence we can apply step $1$ to $F_N$. By a limiting argument we shall obtain the desired estimate for $F$.

\medskip

{\em Step $1$}: Take $F\in L^2(\R^{n+1}_+)$ such that, for some $N>1$, $\supp F\subset K_N$, and note that under this assumption
$\|\mathcal{A}F\|_{L^p(w)}<\infty$. Indeed, $\supp \mathcal{A}F\subset B(0,2N)$, and then
$$
\|\mathcal{A}F\|_{L^p(w)}\leq N^{\frac{n+1}{2}}\|F\|_{L^2(\R^{n+1}_+)}w(B(0,2N))^{\frac{1}{p}}<\infty.
$$

We claim that it is enough to prove that there exist $\alpha>1$ and a constant $c$ such that for all $0<\gamma\leq 1$ and $0<\lambda<\infty$ we have
\begin{align}\label{good-lambda:A-Cp:w}
w(\{x\in \mathbb{R}^n: \mathcal{A}F(x)>2\lambda,\mathcal{C}_{p_0}F(x)\leq \gamma\lambda\})\leq c\gamma^{c_{w}}w(\{x\in \mathbb{R}^n: \mathcal{A}^{\alpha}F(x)>\lambda\}).
\end{align}
Assuming this momentarily it follows that
\begin{align*}
&
w(\{x\in \mathbb{R}^n: \mathcal{A}F(x)>2\lambda\})
\\
&
\qquad \leq w(\{x\in \mathbb{R}^n: \mathcal{A}F(x)>2\lambda,\mathcal{C}_{p_0}F(x)\leq \gamma\lambda\})+ w(\{x\in \mathbb{R}^n: \mathcal{C}_{p_0}F(x)> \gamma\lambda\})
\\
&
\qquad\leq c\gamma^{c_w}w(\{x\in \mathbb{R}^n: \mathcal{A}^{\alpha}F(x)>\lambda\}) + w(\{x\in \mathbb{R}^n: \mathcal{C}_{p_0}F(x)> \gamma\lambda\}).
\end{align*}
This easily gives
\begin{align*}
\|\mathcal{A}F\|_{L^p(w)}^p\leq C_{\gamma,p} \|\mathcal{C}_{p_0}F\|_{L^p(w)}^p+c\gamma^{c_{w}}\|\mathcal{A}^{\alpha}F\|_{L^p(w)}^p.
\end{align*}
From Proposition \ref{prop:alpha} we know that $\|\mathcal{A}^{\alpha}F\|_{L^p(w)}\leq c(\alpha,p)\|\mathcal{A}F\|_{L^p(w)}$. Then, by choosing $\gamma$ small enough so that $c\gamma^{c_w}c(\alpha,p)^p<1$, and since $\|\mathcal{A}F\|_{L^p(w)}<\infty$, we easily conclude that
\begin{align*}
\|\mathcal{A}F\|_{L^p(w)}\lesssim \|\mathcal{C}_{p_0}F\|_{L^p(w)}.
\end{align*}

To complete the proof it remains to show  \eqref{good-lambda:A-Cp:w}. We argue as in \cite{CoifmanMeyerStein}. Write $O_\lambda=\{x\in \mathbb{R}^n:\mathcal{A}^{\alpha}F(x)>\lambda\}$. We may assume that $w(O_\lambda)<\infty$ (otherwise, there is nothing to prove) and this in turn implies that $O_\lambda \subsetneq \R^n$. Without loss of generality we can also suppose that $O_\lambda\neq\emptyset$ (otherwise, both terms in \eqref{good-lambda:A-Cp:w} vanish, since $\mathcal{A}^{\alpha}F\geq \mathcal{A}F$ because $\alpha>1$, and again the proof is trivial). Note finally that $O_\lambda$ is open, fact that can be proved much as in the proof of Proposition \ref{prop:alpha}. We can then take a Whitney decomposition  of $O_\lambda$ (cf. \cite[Chapter VI]{St70}): there exists a family of  closed cubes  $\{Q_j\}_{j\in \N}$ with disjoint interiors satisfying \eqref{Whitney}.
In particular, for each $j\in\N$ we can pick $x_j\in \R^n\setminus O_\lambda$ such that $d(x_j,Q_j)\leq 4 \textrm{diam}(Q_j)$. Furthermore, since $\alpha>1$ we have $\mathcal{A}^{\alpha}F\geq \mathcal{A}F$ and
\begin{multline*}
w(\{x\in \R^n: \mathcal{A}F(x)>2\lambda,\mathcal{C}_{p_0}F(x)\leq \gamma\lambda\})
=
w(\{x\in O_\lambda: \mathcal{A}F(x)>2\lambda,\mathcal{C}_{p_0}F(x)\leq \gamma\lambda\})
\\
=
\sum_{j\in\N} w(\{x\in Q_j: \mathcal{A}F(x)>2\lambda,\mathcal{C}_{p_0}F(x)\leq \gamma\lambda\}).
\end{multline*}
Thus, to show \eqref{good-lambda:A-Cp:w}, it is enough to prove
\begin{align}\label{good-lambda:A-Cp:dx}
|\{x\in Q_j: \mathcal{A}F(x)>2\lambda,\mathcal{C}_{p_0}F(x)\leq \gamma\lambda\}|\leq c\gamma^{p_0}|Q_j|,
\end{align}
which, together with $w \in A_{\infty}$ (cf. \eqref{pesosineq:RHq}), would imply
\begin{align*}
w(\{x\in Q_j: \mathcal{A}F(x)>2\lambda,\mathcal{C}_{p_0}F(x)\leq \gamma\lambda\})\leq c\gamma^{c_w}w(Q_j),
\end{align*}
and summing in $j$ we would get \eqref{good-lambda:A-Cp:w}.

Let us now fix $j\in \N$ and obtain \eqref{good-lambda:A-Cp:dx}. There is nothing to prove if the set on its left-hand side is empty. Thus, we assume that there exists $\bar{x}_j\in \{x\in Q_j: \mathcal{A}F(x)>2\lambda,\mathcal{C}_{p_0}F(x)\leq \gamma\lambda\}$. Let $B_j$ be the ball such that $Q_j\subset B_j$ with $2r_{B_j}=\textrm{diam}(Q_j)$. Then,
$d(x_j,Q_j)\leq 8r_{B_j}$ and $Q_j\subset \overline{B(x_j, 10r_{B_j})}$.

We now write
\begin{align*}
F(x,t)
=
F_{1,j}(x,t)+F_{2,j}(x,t)
:=
F(x,t)\,\chi_{[r_{B_j},\infty)}(t)+
F(x,t)\,\chi_{(0,r_{B_j})}(t).
\end{align*}
In particular, $\mathcal{A}F(x)\leq \mathcal{A}F_{1,j}(x)+\mathcal{A}F_{2,j}(x)$. Easy calculations lead to obtain that for every $\alpha\ge 11$ there holds
\begin{align}\label{AF1}
\mathcal{A}F_{1,j}(x)^2
&
=
\int_{r_{B_j}}^{\infty}\int_{|x-y|<t}|F(y,t)|^2 \frac{dy \, dt}{t^{n+1}}
\leq
\int_{0}^{\infty}\int_{|x_j-y|<\alpha t}|F(y,t)|^2 \frac{dy \, dt}{t^{n+1}}
=
\mathcal{A}^{\alpha}F(x_j)^2 \leq \lambda^2,
\end{align}
where in the last inequality we have used the fact that $x_j\in \R^n\setminus O_\lambda$.
On the other hand, by our choice of $\bar{x}_j\in Q_j\subset B_j$, it follows that
\begin{align}\label{AF2}
\frac{1}{|B_j|}\int_{B_j}\mathcal{A}F_{2,j}(x)^{p_0} \, dx
&=
\frac{1}{|B_j|}\int_{B_j}\left(\int_0^{r_{B_j}}\int_{B(x,t)} |F(y,t)|^2 \frac{dy \, dt}{t^{n+1}}\right)^{\frac{p_0}{2}}dx
\leq
\mathcal{C}_{p_0}F(\bar{x}_j)^{p_0}
\leq
(\gamma\lambda)^{p_0}.
\end{align}
Using \eqref{AF1}, Chebychev's  inequality, and \eqref{AF2} we conclude \eqref{good-lambda:A-Cp:dx}:
\begin{multline*}
|\{x\in Q_j: \mathcal{A}F(x)>2\lambda, C_{p_0}F(x)\leq \gamma \lambda\}|\leq |\{x\in Q_j: \mathcal{A}F_{2,j}(x)>\lambda\}|
\\
\le
\frac1{\lambda^{p_0}}\int_{Q_j}\mathcal{A}F_{2,j}(x)^{p_0} \, dx
\leq \gamma^{p_0}|B_j|
\leq c\gamma^{p_0}|Q_j|.
\end{multline*}
This completes the proof of Step $1$.

\medskip

{\em Step $2$}: Take $F\in L^2_{\rm loc}(\R^{n+1}_+)$ and define, for every $N>1$, $F_N:=F\chi_{K_N}$. Then, since $F_N\in L^2(\R^{n+1}_+)$ and $\supp F_N\subset K_N$, we can apply Step $1$ and obtain that
$$
\|\mathcal{A}F_N\|_{L^p(w)}\lesssim \|\mathcal{C}_{p_0}F_N\|_{L^p(w)}\leq \|\mathcal{C}_{p_0}F\|_{L^p(w)},
$$
where the implicit constant is uniform on $N$.
Finally since $F_N\nearrow F$ in $\R^{n+1}_+$, the Monotone Convergence Theorem yields the desired estimate. This finishes the proof of $(a)$.

\medskip

We next turn to prove $(b)$.
For every $x_0\in \mathbb{R}^n$ and any ball $B\subset \mathbb{R}^n$ such that $x_0\in B$, we have
\begin{align*}
\left(\dashint_B\left(\int_0^{r_B}\int_{B(x,t)}|F(y,t)|^2\frac{dy \, dt}{t^{n+1}}\right)^{\frac{p_0}{2}} \, dx\right)^{\frac{1}{p_0}}
\leq \left(\dashint_B|\mathcal{A}F(x)|^{p_0} \, dx\right)^{\frac{1}{p_0}}
\leq
\mathcal{M}_{p_0}(\mathcal{A}F)(x_0),
\end{align*}
where for any function, $h$, $\mathcal{M}_{p_0} h(x):= \mathcal{M}\big(|h|^{p_0}\big)(x)^{1/p_0}$. Taking the supremum over all balls containing $x_0$, we conclude that $\mathcal{C}_{p_0}F(x_0)\leq \mathcal{M}_{p_0}(\mathcal{A}F)(x_0)$.
Besides, since  $\mathcal{M}_{p_0}:L^{p}(w)\rightarrow L^{p}(w)$ (because $w\in A_{\frac{p}{p_0}}$ and $p>p_0$) we finally conclude that
$$
\|\mathcal{C}_{p_0}F\|_{L^{p}(w)}\leq \|\mathcal{M}_{p_0}(\mathcal{A}F)\|_{L^{p}(w)}\lesssim  \|\mathcal{A}F\|_{L^{p}(w)}.
$$
This completes the proof.
\end{proof}

We conclude this section by stating some easy consequences of the previous results for other tent spaces. Prior to formulating the resulting estimates, we define, for each $0<q<\infty$, the following operators
\begin{align*}
\mathcal{A}_q^{\alpha}F(x):=\left(\iint_{\Gamma^{\alpha}(x)}|F(x,t)|^q \ \frac{dy \, dt}{t^{n+1}}\right)^{\frac{1}{q}},
\qquad
\mathcal{C}_qF(x):=\sup_{B\ni x}\left(\frac{1}{|B|}\int_0^{r_B}\int_{B}|F(y,t)|^q\frac{dy \, dt}{t}\right)^{\frac{1}{q}},
\end{align*}
and
\begin{align*}
\mathcal{C}_{q,p_0}F(x):=\sup_{B\ni x}\left(\dashint_B\left(\int_0^{r_B}\int_{B(x,t)}|F(y,t)|^q\frac{dy \, dt}{t^{n+1}}\right)^{\frac{p_0}{q}} \, dx\right)^{\frac{1}{p_0}}.
\end{align*}
Much as before we have that $\mathcal{C}_q F\approx \mathcal{C}_{q,q}F$. Besides, we obtain the following analogues of Propositions \ref{prop:alpha} and \ref{prop:maximal}.

\begin{proposition}
Let $0<q<\infty$ and $0< \alpha\leq \beta<\infty$.
\begin{list}{$(\theenumi)$}{\usecounter{enumi}\leftmargin=1cm
\labelwidth=1cm\itemsep=0.2cm\topsep=.2cm
\renewcommand{\theenumi}{\roman{enumi}}}

\item For every $w\in A_r$, $1\le r<\infty$, there holds
$$
\|\mathcal{A}^{\beta}_q F\|_{L^p(w)}\
\leq
C \left(\frac{\beta}{\alpha}\right)^{\frac{nr}{p}}
\|\mathcal{A}^{\alpha}_q F\|_{L^p(w)}, \quad \textrm{for all} \quad 0<p\leq qr.
$$

\item For every $w\in RH_{s'}$, $1\le s<\infty$, there holds
$$
\|\mathcal{A}^{\alpha}_q F\|_{L^p(w)}\leq  C\left(\frac{\alpha}{\beta}\right)^{\frac{n}{sp}} \|\mathcal{A}^{\beta}_q F\|_{L^p(w)},\quad \text{for all} \quad \frac{q}{s}\leq p<\infty.
$$
\end{list}
\end{proposition}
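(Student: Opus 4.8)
The plan is to deduce this statement directly from Proposition~\ref{prop:alpha}, that is, from the already-established case $q=2$, by means of the elementary substitution $G:=|F|^{q/2}$. The key observation is the pointwise identity
$$
\mathcal{A}^{\alpha}_q F(x)=\left(\iint_{\Gamma^{\alpha}(x)}|G(y,t)|^2\,\frac{dy\,dt}{t^{n+1}}\right)^{1/q}=\big(\mathcal{A}^{\alpha}G(x)\big)^{2/q},
$$
valid for every aperture $\alpha>0$ and every $x\in\R^n$, where $\mathcal{A}^{\alpha}$ is the operator defined in \eqref{AF}. First I would record this identity and note that $G$ is a non-negative measurable function on $\R^{n+1}_+$, hence an admissible input for Proposition~\ref{prop:alpha}; no hypothesis on $F$ beyond the implicit measurability is required, since all the integrals are interpreted with values in $[0,\infty]$.

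Next I would raise the identity to the $p$-th power and integrate against $w$ to get, for every $0<p<\infty$ and every weight $w$,
$$
\|\mathcal{A}^{\alpha}_q F\|_{L^p(w)}=\|\mathcal{A}^{\alpha}G\|_{L^{\widetilde p}(w)}^{2/q},\qquad\widetilde p:=\frac{2p}{q},
$$
together with the analogous identity for $\beta$ in place of $\alpha$. For part $(i)$, the restriction $0<p\le q\,r$ is exactly $0<\widetilde p\le 2r$, so Proposition~\ref{prop:alpha}$(i)$ applies to $G$ with exponent $\widetilde p$ and weight $w\in A_r$, giving the inequality with the factor $(\beta/\alpha)^{nr/\widetilde p}$; raising that inequality to the power $2/q$ and using $\tfrac2q\cdot\tfrac{nr}{\widetilde p}=\tfrac{nr}{p}$ produces $(i)$. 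Part $(ii)$ is completely parallel: the range $q/s\le p<\infty$ corresponds to $2/s\le\widetilde p<\infty$, Proposition~\ref{prop:alpha}$(ii)$ with $w\in RH_{s'}$ gives the factor $(\alpha/\beta)^{n/(s\widetilde p)}$, and raising to the power $2/q$ with $\tfrac2q\cdot\tfrac{n}{s\widetilde p}=\tfrac{n}{sp}$ yields $(ii)$.

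I do not expect a genuine obstacle. The only delicate points are the two pieces of bookkeeping just used — that $\widetilde p=2p/q$ transports the admissible ranges of $p$ and the exponents of $\beta/\alpha$ (resp.\ $\alpha/\beta$) correctly — and the observation that $G=|F|^{q/2}$ is an admissible function for Proposition~\ref{prop:alpha}. In particular none of the substantive machinery behind Proposition~\ref{prop:alpha} (the extrapolation of Lemma~\ref{lemma:extrapol}, the good-$\lambda$/Whitney arguments handling the endpoints $r=1$ and $s=1$) needs to be revisited. A more laborious alternative would be to repeat verbatim the three-step proof of Proposition~\ref{prop:alpha} with every $2$ replaced by $q$, but that is unnecessary and I would avoid it.
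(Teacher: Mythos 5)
Your proof is correct and is exactly the route the paper takes: the authors also derive the statement from Proposition \ref{prop:alpha} via the identity $\mathcal{A}_q^{\alpha}F=\big(\mathcal{A}^{\alpha}(|F|^{q/2})\big)^{2/q}$ and the bookkeeping $\widetilde p=2p/q$. Your exponent and range computations $\tfrac2q\cdot\tfrac{nr}{\widetilde p}=\tfrac{nr}{p}$, $\tfrac2q\cdot\tfrac{n}{s\widetilde p}=\tfrac{n}{sp}$, and $0<\widetilde p\le 2r$ (resp.\ $2/s\le\widetilde p<\infty$) are all correct.
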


\begin{proposition}\
\begin{list}{$(\theenumi)$}{\usecounter{enumi}\leftmargin=1cm \labelwidth=1cm\itemsep=0.2cm\topsep=.2cm \renewcommand{\theenumi}{\alph{enumi}}}
\item If $0<p_0,p<\infty$, $w \in A_{\infty}$, and $F\in L^q_{\rm loc}(\R^{n+1}_+)$ then
\begin{align*}
\|\mathcal{A}_q F\|_{L^p(w)}\lesssim \|\mathcal{C}_{q,p_0}F\|_{L^p(w)}.
\end{align*}

\item If $0< p_0< p<\infty$ and $w\in A_{\frac{p}{p_0}}$ then
\begin{align*}
\|\mathcal{C}_{q,p_0}F\|_{L^p(w)}\lesssim \|\mathcal{A}_q F\|_{L^p(w)}.
\end{align*}
\end{list}
\end{proposition}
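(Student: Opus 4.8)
The plan is to deduce the statement from the case $q=2$, which is precisely Proposition \ref{prop:maximal}, by the elementary substitution $G:=|F|^{q/2}$.

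The first step is to record the pointwise identities obtained by writing $|G|^2=|F|^q$ inside the defining integrals. For every $x\in\R^n$ one has
$$
\mathcal{A}_q F(x)=\big(\mathcal{A}G(x)\big)^{2/q},
\qquad\mbox{and}\qquad
\mathcal{C}_{q,p_0}F(x)=\big(\mathcal{C}_{2p_0/q}\,G(x)\big)^{2/q}.
$$
The first identity is immediate from the definitions. For the second, set $\tilde p_0:=2p_0/q$ and note that the inner exponent $p_0/q$ in the definition of $\mathcal{C}_{q,p_0}$ equals $\tilde p_0/2$; hence, after replacing $|F|^q$ by $|G|^2$, the inner average appearing in $\mathcal{C}_{q,p_0}F(x)^{p_0}$ coincides with the one in $\mathcal{C}_{\tilde p_0}G(x)^{\tilde p_0}$, and since $t\mapsto t^{\tilde p_0/p_0}=t^{2/q}$ is increasing the supremum over balls $B\ni x$ is preserved. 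Note also that $F\in L^q_{\mathrm{loc}}(\R^{n+1}_+)$ forces $G\in L^2_{\mathrm{loc}}(\R^{n+1}_+)$, so $G$ is an admissible input in Proposition \ref{prop:maximal}.

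Next, set $\tilde p:=2p/q$. Raising the two identities to the $p$-th power and integrating against $w$ gives
$$
\|\mathcal{A}_q F\|_{L^p(w)}=\|\mathcal{A}G\|_{L^{\tilde p}(w)}^{2/q},
\qquad
\|\mathcal{C}_{q,p_0}F\|_{L^p(w)}=\|\mathcal{C}_{\tilde p_0}G\|_{L^{\tilde p}(w)}^{2/q}.
$$
For part $(a)$, the hypotheses $0<p_0,p<\infty$ become $0<\tilde p_0,\tilde p<\infty$, and $w\in A_\infty$ is unchanged, so Proposition \ref{prop:maximal}$(a)$ applied to $G$ yields $\|\mathcal{A}G\|_{L^{\tilde p}(w)}\lesssim\|\mathcal{C}_{\tilde p_0}G\|_{L^{\tilde p}(w)}$; raising to the power $2/q$ gives the claim. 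For part $(b)$, $0<p_0<p$ becomes $0<\tilde p_0<\tilde p$ and $w\in A_{p/p_0}=A_{\tilde p/\tilde p_0}$, so Proposition \ref{prop:maximal}$(b)$ applied to $G$ gives $\|\mathcal{C}_{\tilde p_0}G\|_{L^{\tilde p}(w)}\lesssim\|\mathcal{A}G\|_{L^{\tilde p}(w)}$, and once more raising to the power $2/q$ finishes the proof. The auxiliary equivalence $\mathcal{C}_qF\approx\mathcal{C}_{q,q}F$ is proved in the same fashion, by applying the already established relation $\mathcal{C}H\approx\mathcal{C}_2H$ to $H=G$.

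I do not expect a genuine obstacle here: this is a bookkeeping reduction. The only two points requiring a moment's care are checking that both maximal operators rescale with the \emph{same} exponent $2/q$ --- which hinges on the inner exponent $p_0/q$ in $\mathcal{C}_{q,p_0}$ being exactly $\tilde p_0/2$, so that the substitution lands on $\mathcal{C}_{\tilde p_0}$ and not on some other operator --- and that the Muckenhoupt class is invariant under the exponent change, which is clear since $p/p_0=\tilde p/\tilde p_0$. The change-of-angle proposition displayed just above is handled identically: one applies Proposition \ref{prop:alpha} to $G=|F|^{q/2}$, using that the $A_r$ and $RH_{s'}$ assumptions are untouched and that the ranges $0<p\le qr$ and $q/s\le p<\infty$ correspond to $0<\tilde p\le 2r$ and $2/s\le\tilde p<\infty$.
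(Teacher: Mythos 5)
Your proposal is correct and is exactly the reduction the paper itself invokes: the paper states that the result ``follows immediately from Propositions \ref{prop:alpha} and \ref{prop:maximal}'' together with the identities $\mathcal{A}_q^{\alpha}F=\big(\mathcal{A}^{\alpha}(|F|^{q/2})\big)^{2/q}$ and $\mathcal{C}_{q,p_0}F=\big(\mathcal{C}_{2p_0/q}(|F|^{q/2})\big)^{2/q}$, which are precisely your substitutions with $G=|F|^{q/2}$. You have merely spelled out the accompanying change of Lebesgue exponent $\tilde p=2p/q$ and the invariance of the $A_{p/p_0}$ condition, all of which is correct.
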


The proofs of these results follow immediately from Propositions \ref{prop:alpha} and \ref{prop:maximal}, and the equalities
$$
\mathcal{A}_q^{\alpha}F(x)=\mathcal{A}^{\alpha}(|F|^{\frac{q}{2}})(x)^{\frac{2}{q}} \qquad \textrm{and} \qquad \mathcal{C}_{q,p_0}F(x)=\mathcal{C}_{\frac{2\,p_0}{q}} \big(|F|^{\frac{q}{2}}\big)(x)^{\frac{2}{q}}.
$$

\section{Proofs of the main results }\label{section:main-results-SF}
In this section we shall prove Theorems \ref{thm:SF-Heat}, \ref{thm:SF-Poisson} , \ref{theor:control-SF-Heat}, and \ref{theor:control-SF-Poisson}.

\medskip

Let us first fix some notation. Given a ball $B\in \mathbb{R}^n$, and unless otherwise specified, we write $x_B$ and $r_B$ to denote respectively its center and its radius, so that $B=B(x_B,r_B)$. For every $\lambda>0$ let $\lambda B=B(x_B,\lambda\,B)$ be the ball concentric with $B$ whose radius is $\lambda\,r_B$. Finally, we write
$$
C_1(B):=4B,\qquad C_j(B):=2^{j+1}B\setminus 2^jB,\quad j\ge 2.
$$

\subsection{Proof of Theorem \ref{thm:SF-Heat}}

\subsubsection{Proof of Theorem \ref{thm:SF-Heat}, part $(a)$}


Let us start by introducing more notation.
From now on, $\mathcal{Q}_t$ denotes $t^2Le^{-t^2L}$, $t\nabla_{y}e^{-t^2L}$, or $t\nabla_{y,t}e^{-t^2L}$ in such a way that, if we write
$$
\widetilde{\mathcal{A}}f(x):=\left(\iint_{\Gamma(x)}|\mathcal{Q}_t f(y)|^2 \frac{dy \, dt}{t^{n+1}}\right)^{\frac{1}{2}},
$$
then $\widetilde{\mathcal{A}}f$ is respectively $\Scal_{\hh}f$, $\Grm_{\hh}f$, or $\Gcal_{\hh}f$.

The boundedness of $\widetilde{\mathcal{A}}$ follows from the combination of Proposition \ref{prop:maximal} and the following auxiliary result.

\begin{proposition}\label{prop:Cpo-Mp0-S-heat}
Let $\mathcal{Q}_t$ denote $t^2Le^{-t^2L}$, $t\nabla_{y}e^{-t^2L}$, or $t\nabla_{y,t}e^{-t^2L}$. If we set
$$
\widetilde{\mathcal{C}}_{p_0}f(x):=\sup_{B\ni x}\left(\frac{1}{|B|}\int_{B}\left(\int_{0}^{r_B}\int_{B(x,t)}|\mathcal{Q}_t f(y)|^2 \frac{dy \, dt}{t^{n+1}}\right)^{\frac{p_0}{2}} \, dx\right)^{\frac{1}{p_0}},
$$
then, for every $p_-(L)<p_0\le 2$, there holds
\begin{equation}\label{Cpo-Mp0-S-heat}
\widetilde{\mathcal{C}}_{p_0}f(x)\lesssim \mathcal{M}_{p_0}f(x), \qquad x\in \mathbb{R}^n.
\end{equation}
\end{proposition}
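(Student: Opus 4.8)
The plan is a Whitney/annular decomposition of $f$ adapted to an arbitrary ball $B=B(x_B,r_B)$ containing the point $x_0$ at which we estimate $\widetilde{\mathcal{C}}_{p_0}f$. Write $f=\sum_{j\ge1}f_j$ with $f_1:=f\chi_{4B}=f\chi_{C_1(B)}$ and $f_j:=f\chi_{C_j(B)}$ for $j\ge2$. Since the $L^2$-norm is subadditive, $\big(\int_0^{r_B}\int_{B(x,t)}|\mathcal{Q}_tf(y)|^2\frac{dy \, dt}{t^{n+1}}\big)^{1/2}\le\sum_{j\ge1}I_j(x)$, where $I_j(x):=\big(\int_0^{r_B}\int_{B(x,t)}|\mathcal{Q}_tf_j(y)|^2\frac{dy \, dt}{t^{n+1}}\big)^{1/2}$; then, using Minkowski's inequality in $L^{p_0}(B,\frac{dx}{|B|})$ if $p_0\ge1$ and $(\sum_j a_j)^{p_0}\le\sum_j a_j^{p_0}$ if $p_0<1$, it suffices to show that $\mathcal{I}_j:=\big(\frac1{|B|}\int_B I_j^{p_0}\,dx\big)^{1/p_0}$ satisfies $\sum_{j\ge1}\mathcal{I}_j\lesssim\mathcal{M}_{p_0}f(x_0)$, which will follow from geometric-in-$j$ decay of the $\mathcal{I}_j$.

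For the local term $j=1$ I would use $I_1(x)\le\widetilde{\mathcal{A}}f_1(x)$ together with the \emph{unweighted} $L^{p_0}$-boundedness of the conical square function $\widetilde{\mathcal{A}}\in\{\Scal_{\hh},\Grm_{\hh},\Gcal_{\hh}\}$ in the range $p_0>p_-(L)$, i.e.\ the estimate established in \cite{AuscherHofmannMartell}. This gives
$$
\mathcal{I}_1^{p_0}=\frac1{|B|}\int_B I_1^{p_0}\,dx\le\frac1{|B|}\int_{\R^n}\widetilde{\mathcal{A}}f_1^{p_0}\,dx\lesssim\frac1{|B|}\int_{4B}|f|^{p_0}\,dx\lesssim\mathcal{M}_{p_0}f(x_0)^{p_0}.
$$

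For $j\ge2$ I would invoke the off-diagonal estimates of Section~\ref{section:OD}. For $x\in B$ and $0<t<r_B$ the slice $\{y:|x-y|<t\}$ is contained in $2B$ and has measure $\lesssim t^n$, so by Fubini $\frac1{|B|}\int_B I_j(x)^2\,dx\lesssim\frac1{|B|}\int_0^{r_B}\|(\mathcal{Q}_tf_j)\chi_{2B}\|_{L^2(\R^n)}^2\,\frac{dt}{t}$. Since $\{\mathcal{Q}_t\}_{t>0}\in\mathcal{F}_\infty(L^{p_0}\to L^2)$ for every $p_-(L)<p_0\le2$ (valid in all three cases because $p_+(L),q_+(L)>2$ and $q_-(L)=p_-(L)$) and $d(C_j(B),2B)\gtrsim2^jr_B$,
$$
\|(\mathcal{Q}_tf_j)\chi_{2B}\|_{L^2(\R^n)}\lesssim t^{-n(\frac1{p_0}-\frac12)}e^{-c\,4^jr_B^2/t^2}\|f_j\|_{L^{p_0}(\R^n)}.
$$
Integrating in $t$ (the super-polynomial decay of the Gaussian as $t\to0$ absorbs the negative power of $t$), using $\|f_j\|_{L^{p_0}}^2\lesssim(2^jr_B)^{2n/p_0}\mathcal{M}_{p_0}f(x_0)^2$ and $|B|\approx r_B^n$, all powers of $r_B$ cancel and $\frac1{|B|}\int_B I_j^2\,dx\lesssim 2^{2jn/p_0}e^{-c'4^j}\mathcal{M}_{p_0}f(x_0)^2$; Hölder with exponent $2/p_0\ge1$ then yields $\mathcal{I}_j\lesssim2^{jn/p_0}e^{-c''4^j}\mathcal{M}_{p_0}f(x_0)$, which is summable in $j\ge2$. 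Combining the two cases and taking the supremum over the balls $B\ni x_0$ proves \eqref{Cpo-Mp0-S-heat}.

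I expect the local term $j=1$ to be the only genuine difficulty. Pure off-diagonal estimates do not handle it: at the endpoint pair $L^{p_0}\to L^2$ the dilation factor $t^{-n(1/p_0-1/2)}$ is non-integrable near $t=0$ without a Gaussian gain, whereas dominating $I_1$ by the $L^2$-square function would only produce $\mathcal{M}_2f$, which is strictly larger than $\mathcal{M}_{p_0}f$ when $p_0<2$. Hence one must feed in the already-known unweighted $L^{p_0}$ bound from \cite{AuscherHofmannMartell} (alternatively, this bound can be recovered self-containedly by a Calder\'on--Zygmund-type bootstrap from the classical $L^2$ estimate using the Gaffney bounds of Section~\ref{section:OD}). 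Everything else is the standard annular-decomposition-plus-off-diagonal scheme.
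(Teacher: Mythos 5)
Your proof is correct and follows essentially the same strategy as the paper: split $f$ at $4B$ into local and global parts, handle the local part via the unweighted $L^{p_0}$ boundedness of the conical square functions (which the paper establishes as Proposition~\ref{prop:Sh} via a Calder\'on--Zygmund argument from \cite{AuscherMartell:III}, i.e.\ the bootstrap you describe as the alternative), and handle the global part by annular decomposition plus the $L^{p_0}\to L^2$ Gaffney bounds. The only cosmetic difference is that for $j\ge2$ the paper simply observes that $B(x,t)\subset 2B$ for $x\in B$, $t<r_B$, so the inner integral is $x$-independent and the $L^{p_0}(B,dx/|B|)$ average is trivial, whereas you reach the same point via Fubini and a Hölder step.
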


Assuming this result momentarily we prove Theorem \ref{thm:SF-Heat}, part $(a)$. Note that taking $F(y,t)=\mathcal{Q}_t f(y)$ in \eqref{AF} and in \eqref{CF} we have that $\widetilde{\mathcal{A}}f(x)=\mathcal{A}F(x)$ and $\widetilde{\mathcal{C}}_{p_0}f(x)=\mathcal{C}_{p_0}F(x)$. Thus \eqref{Cpo-Mp0-S-heat}, in concert with $(a)$ in  Proposition \ref{prop:maximal},  implies that, for every $0<p<\infty$ and $w\in A_\infty$,
\begin{align*}
 \|\widetilde{\mathcal{A}}f\|_{L^p(w)}\lesssim \|\widetilde{\mathcal{C}}_{p_0}f\|_{L^p(w)}\lesssim \|\mathcal{M}_{p_0}f\|_{L^p(w)}, \quad \textrm{for all} \quad p_-(L)<p_0\le 2,
\end{align*}
provided $\mathcal{Q}_tf\in L^2_{\rm loc}(\R_+^{n+1})$. Hence, the above estimate holds for all functions $f\in L^\infty_c(\R^n)$.
Next, fix $w\in A_\infty$ and $p\in \mathcal{W}_w(p_-(L),\infty)$. Then, there exists $p_-(L)<p_0\le 2$ (close enough to $p_-(L)$), such that $w\in A_{\frac{p}{p_0}}$. Therefore,  $\mathcal{M}_{p_0}$ is bounded on $L^p(w)$ and consequently the previous estimate leads to
\begin{align}\label{est:Atilde}
\|\widetilde{\mathcal{A}}f\|_{L^p(w)}
\le
C\,\|f\|_{L^p(w)},\qquad\forall\, f\in L^\infty_c(\R^n).
\end{align}
A routine density argument allows one to extend this estimate to all functions in $L^p(w)$.

Let us notice that \eqref{Cpo-Mp0-S-heat} with $p_0=2$ appears implicit in \cite[p. 5479]{AuscherHofmannMartell}. Having used that estimate we would have obtained \eqref{est:Atilde} for every $2<p<\infty$ and $w\in A_{p/2}$. However, using $\mathcal{C}_{p_0}$ with $p_0$ very close to $p_-(L)$
allows to obtain better estimates: \eqref{est:Atilde} holds for every $p_-(L)<p<\infty$ and $w\in A_{p/p_-(L)}$.

We are left with the proof of Proposition \ref{prop:Cpo-Mp0-S-heat}, in  which we shall use the following unweighted estimates for the conical square functions that we are currently considering.

\begin{proposition}\label{prop:Sh}
The square functions $\Scal_{\hh}$,  $\Grm_{\hh}$, and  $\Gcal_{\hh}$ are bounded on $L^p(\mathbb{R}^n)$ for every $p_-(L)<p\le 2$.
\end{proposition}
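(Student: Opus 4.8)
The plan is to prove Proposition~\ref{prop:Sh} as a consequence of the off-diagonal estimates recalled in Section~\ref{section:OD}, following the by-now standard machinery for conical square functions associated to operators satisfying Gaffney-type bounds (as in \cite{AuscherHofmannMartell} and \cite{HofmannMayboroda}). Since $\Grm_{\hh}f\le\Gcal_{\hh}f$ pointwise, it suffices to treat $\Scal_{\hh}$ and $\Gcal_{\hh}$. For the $L^2$ bound one uses the $H^\infty$-functional calculus: the square function estimate $\|\Scal_{\hh}f\|_{L^2(\R^n)}\approx\|f\|_{L^2(\R^n)}$ follows from Fubini (the cone integral over $\Gamma(x)$ integrates in $x$ to a constant times $\int_{\R^{n+1}_+}|t^2Le^{-t^2L}f(y)|^2\,\frac{dy\,dt}{t}$) together with the standard quadratic estimate $\int_0^\infty\|t^2Le^{-t^2L}f\|_2^2\,\frac{dt}{t}\lesssim\|f\|_2^2$ valid for maximal-accretive $L$; the same applies to $\Gcal_{\hh}$ using $\|t\nabla_{y,t}e^{-t^2L}f\|_2^2\lesssim\|t\sqrt{L}\,e^{-t^2L}f\|_2^2+\|(t^2L)e^{-t^2L}f\|_2^2$ and the Kato estimate $\|\nabla g\|_2\approx\|\sqrt{L}\,g\|_2$.

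For the range $p_-(L)<p<2$ the plan is to run a Calder\'on--Zygmund decomposition argument at the level of the good-$\lambda$ / weak-type $(p_0,p_0)$ inequality, exactly in the spirit of the arguments that already appear in the paper for $\mathcal{C}_{p_0}$. More concretely, fix $p_-(L)<p_0<2$; I would show that each of $\Scal_{\hh}$ and $\Gcal_{\hh}$ maps $L^{p_0}(\R^n)$ to $L^{p_0,\infty}(\R^n)$, and then interpolate with the $L^2$ bound (Marcinkiewicz) to get strong-type $L^p$ for every $p_0<p\le 2$; letting $p_0\downarrow p_-(L)$ gives the full range $p_-(L)<p\le 2$. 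The weak-type bound is obtained by taking $f\in L^{p_0}\cap L^2$, performing a Calder\'on--Zygmund decomposition $f=g+b=g+\sum_j b_j$ at height $\lambda$ with the cubes $\{Q_j\}$, estimating the good part by the $L^2$ bound and Chebyshev, and for the bad part splitting $b_j$ into a "local" piece near $Q_j$ (handled by the $L^{p_0}$-$L^2$ off-diagonal bounds for $(t^2L)^me^{-t^2L}$, valid since $p_-(L)<p_0$) and a "global" piece, using a standard subdivision of the time integral $\int_0^{\ell(Q_j)}$ versus $\int_{\ell(Q_j)}^\infty$ together with the exponential off-diagonal decay to sum over the annuli $C_k(Q_j)$ and over $j$, controlling the overlap by $\|f\|_{p_0}^{p_0}/\lambda^{p_0}$. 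The cancellation $\int b_j=0$ is exploited via $(t^2L)^m e^{-t^2L}1$-type identities (writing $(t^2L)e^{-t^2L}=-\int$ of a derivative) to gain the extra power of $\ell(Q_j)/t$ needed to sum the large-time contribution; here the polynomial/exponential decay of the Gaffney estimates is what makes the series converge.

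The main obstacle I anticipate is the bad-part estimate for $\Gcal_{\hh}$, i.e., for the term $t\nabla_{y,t}(t^2L)^m e^{-t^2L}$ with $m=0$: one has only the off-diagonal bounds with $q$-exponent restricted to $q_-(L)<q<q_+(L)$ rather than $p_-(L)<q<p_+(L)$, so when $p_0$ is pushed close to $p_-(L)=q_-(L)$ one must be careful that all the $L^{p_0}$-$L^2$ off-diagonal estimates actually used (for $t\nabla_y e^{-t^2L}$ as well as for $(t^2L)e^{-t^2L}$) remain valid; this is exactly where the identity $q_-(L)=p_-(L)$ recorded in Section~\ref{section:OD} is needed. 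A secondary technical point is justifying the $L^2$-local-integrability and the density/limiting steps so that the a priori finiteness of the quantities being manipulated is guaranteed; this is routine and parallels Step~1/Step~2 of the proof of Proposition~\ref{prop:maximal}. I would also remark that Proposition~\ref{prop:Sh} is essentially contained in \cite{AuscherHofmannMartell}, so in the written proof I would either cite it directly or reproduce the short argument above, whichever is cleaner for the exposition.
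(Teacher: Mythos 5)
Your overall strategy matches the paper's plan: reduce to $\Scal_{\hh}$, prove the $L^2$ bound via the quadratic estimate for maximal accretive operators, then establish a weak-type $(p_0,p_0)$ estimate for $p_-(L)<p_0<2$ and interpolate. There is, however, a genuine gap in how you propose to handle the bad part of the Calder\'on--Zygmund decomposition. You suggest exploiting the mean-zero cancellation $\int b_j=0$ through ``$(t^2L)^me^{-t^2L}1$-type identities'' to gain the extra power of $\ell(Q_j)/t$. That is the classical Calder\'on--Zygmund cancellation trick, and it fails here: the heat semigroup of a complex-coefficient divergence form $L$ has no pointwise kernel bounds or H\"older regularity, only $L^p$--$L^q$ Gaffney off-diagonal estimates, so one cannot gain decay from $\int b_j=0$ by subtracting a constant value of a kernel. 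The idea that replaces it --- and that the paper uses by invoking \cite[Theorem 2.4]{AuscherMartell:III} --- is the approximation $A_{r_B^2}=(I-e^{-r_B^2L})^M$ with $M$ large, together with the verification of the two off-diagonal conditions \eqref{241} and \eqref{242} for $\Scal_{\hh}\circ A_{r_B^2}$ and $I-A_{r_B^2}$ on the annuli $C_j(B)$. The small parameter that makes the series converge comes from the semigroup-difference bound \eqref{off-diff-semi}, which supplies a factor $(t/s)^{2M}$, not from the mean-zero of $b_j$. Without introducing such approximations, the bad-part sum in your sketch will not close.

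Two smaller points. The paper bounds $\Gcal_{\hh}\lesssim\Grm_{\hh}+\Scal_{\hh}$ and quotes \cite{Auscher} for $\Grm_{\hh}$, so only $\Scal_{\hh}$ needs a fresh argument; your reduction to $\Scal_{\hh}$ and $\Gcal_{\hh}$ is valid but makes you prove more than necessary. Moreover, once that reduction is made, the concern you raise about $q_-(L)$ versus $p_-(L)$ disappears, since only the $\{(t^2L)^me^{-t^2L}\}_{t>0}$ family, with range $(p_-(L),p_+(L))$, enters the estimate.
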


Let us note that the boundedness of $\Grm_{\hh}$ has been established in \cite[Section 6.2]{Auscher}. On the other hand, one can  easily see that $\Gcal_{\hh}\lesssim \Grm_{\hh}+\Scal_{\hh}$, and therefore we only have to consider $\Scal_{\hh}$. In turn, this operator will be handled by
using a Calder\'on-Zygmund type result from \cite{AuscherMartell:III} after the proof of Proposition \ref{prop:Cpo-Mp0-S-heat}.

We would like to observe that, a posteriori, Theorem \ref{thm:SF-Heat}, part $(a)$, applied with $w\equiv 1$, implies that $\Scal_{\hh}$,  $\Grm_{\hh}$, and  $\Gcal_{\hh}$ are also bounded on $L^p(\mathbb{R}^n)$ for every $2\le p<\infty$ (and therefore in the range $p_-(L)< p<\infty$). The case $\Grm_{\hh}$ was obtained in \cite[Theorem 3.1, part (2)]{AuscherHofmannMartell}.

\begin{proof}[Proof of Proposition \ref{prop:Cpo-Mp0-S-heat}]
Fix $p_-(L)<p_0\le 2$ and  $x_0\in\R^n$. Take an arbitrary ball $B\ni x_0$ and split $f$  into its local and global parts:
$f=f_{\rm loc}+f_{\rm glob}:=f\chi_{4B}+f\chi_{\R^n\setminus 4B}$.

For $f_{\rm loc}$, we use that $\widetilde{\mathcal{A}}$ is bounded on $L^{p_0}(\mathbb{R}^n)$ by Proposition \ref{prop:Sh}:
\begin{multline*}
\left(\frac{1}{|B|}\int_{B}\left(\int_{0}^{r_B}\int_{B(x,t)}|\mathcal{Q}_tf_{\rm loc}(y)|^2 \frac{dy \, dt}{t^{n+1}}\right)^{\frac{p_0}{2}} \, dx \right)^{\frac{1}{p_0}}
\le
\left(\frac{1}{|B|}\int_{\mathbb{R}^n} \widetilde{\mathcal{A}} f_{\rm loc}(x)^{p_0}\,dx\right)^{\frac{1}{p_0}}
\\
\lesssim \left(\frac{1}{|B|}\int_{\mathbb{R}^n}|f_{\rm loc}(x)|^{p_0} \, dx\right)^{\frac{1}{p_0}}
\lesssim \left(\frac{1}{|4B|}\int_{4B}|f(x)|^{p_0} \, dx\right)^{\frac{1}{p_0}}
\lesssim \mathcal{M}_{p_0}f(x_0).
\end{multline*}
As for $f_{\rm glob}$, since $\{\mathcal{Q}_t\}_{t>0}\in \mathcal{F}_\infty(L^{p_0}\rightarrow L^2)$ ---where we recall that $\mathcal{Q}_t$ is $t^2Le^{-t^2L}$, $t\nabla_{y}e^{-t^2L}$, or $t\nabla_{y,t}e^{-t^2L}$--- and since $\supp f_{\rm glob}\subset \R^n\setminus 4B$, we have
\begin{align*}
&\left(\frac{1}{|B|}\int_{B}\left(\int_{0}^{r_B}\int_{B(x,t)}|\mathcal{Q}_tf_{\rm glob}(y)|^2 \frac{dy \, dt}{t^{n+1}}\right)^{\frac{p_0}{2}} \, dx \right)^{\frac{1}{p_0}}
\lesssim
\sum_{j\ge 2} \left(\int_{0}^{r_B}\int_{2B}|\mathcal{Q}_t(f\,\chi_{C_j(B)})(y)|^2\frac{dy \, dt}{t^{n+1}}\right)^{\frac{1}{2}}
\\
&\qquad\qquad\lesssim \sum_{j\geq 2} \left(\int_{0}^{r_B}\left(\int_{2^{j+1}B}|f(y)|^{p_0} \, dy\right)^{\frac{2}{p_0}}t^{-2\,n(\frac{1}{p_0}-\frac12)}e^{-c\frac{4^jr_B^2}{t^2}}\frac{dt}{t^{n+1}}\right)^{\frac{1}{2}}
\\
&\qquad\qquad\lesssim \mathcal{M}_{p_0}f(x_0) \sum_{j\geq 2} \left(\int_{0}^{r_B}(2^jr_B)^{\frac{2n}{p_0}}t^{-\frac{2n}{p_0}}e^{-c\frac{4^jr_B^2}{t^2}} \frac{dt}{t}\right)^{\frac{1}{2}}
\lesssim \mathcal{M}_{p_0}f(x_0).
\end{align*}
Gathering the estimates obtained for $f_{\rm loc}$ and for $f_{\rm glob}$ we conclude that
$$
\left(\frac{1}{|B|}\int_{B}\left(\int_{0}^{r_B}\int_{B(x,t)}|\mathcal{Q}_tf(y)|^2 \frac{dy \, dt}{t^{n+1}}\right)^{\frac{p_0}{2}} \, dx \right)^{\frac{1}{p_0}}
\lesssim \mathcal{M}_{p_0}f(x_0).
$$
Taking the supremum over all balls $B$ such that $x_0\in B$ we readily conclude the desired estimate.
\end{proof}

\begin{proof}[Proof of Proposition \ref{prop:Sh}]
As explained above we only need to consider the operator $\Scal_{\hh}$. It is well-known that $\Scal_{\hh}$ is bounded on $L^2(\R^n)$. Fix then
$p_-(L)<p<2$ and take $p_-(L)<p_0<p<2$. We shall apply \cite[Theorem 2.4]{AuscherMartell:III} (see also \cite{Auscher} and \cite{AuscherMartell:I}).
We claim that given $f\in L^{\infty}_c(\mathbb{R}^n)$ with $\supp f\subset B\subset \mathbb{R}^n$, the following estimates hold
\begin{align}\label{241}
\left(\dashint_{C_j(B)}\left|\Scal_{\hh}(I-e^{-r_B^2L})^Mf\right|^{p_0} \, dx\right)^{\frac{1}{p_0}}
\leq g(j)\left(\dashint_{B}\left|f\right|^{p_0} \, dx\right)^{\frac{1}{p_0}}, \qquad j\geq 2,
\end{align}
and
\begin{align}\label{242}
\left(\dashint_{C_j(B)}\left|I-(I-e^{-r_B^2L})^Mf\right|^2 \, dx\right)^{\frac{1}{2}}
\leq g(j)\left(\dashint_{B}\left|f\right|^{p_0} \, dx\right)^{\frac{1}{p_0}},\qquad j\geq 1,
\end{align}
with $g(j)=C\,2^{-j\,(2M+\frac{n}{p_0})}$. Assuming this momentarily and taking $M$ large enough in such a way that
$\sum_{j\ge 1} g(j)2^{jn}<\infty$, \cite[Theorem 2.4]{AuscherMartell:III} implies that $\Scal_{\hh}$ is of weak-type $(p_0,p_0)$ and, by Marcinkiewicz's interpolation Theorem, bounded on $L^p(\R^n)$, which is our goal.

In view of the previous considerations we need to obtain \eqref{241} and \eqref{242}. Fix a ball $B$. For $f\in L^{\infty}_c(\mathbb{R}^n)$ with $\supp f\subset B$, we first prove \eqref{241}. Define $A_{r_B^2}:=(I-e^{-r_B^2L})^M$ and by Fubini (or see \cite[Lemma 1]{CoifmanMeyerStein}) conclude that
\begin{align*}
&\left(\dashint_{C_j(B)}\left|\Scal_{\hh} A_{r_B^2}f(x)\right|^{p_0} \, dx\right)^{\frac{1}{p_0}}
\leq
\left(\dashint_{C_j(B)}\left|\Scal_{\hh} A_{r_B^2}f(x)\right|^2 \, dx\right)^{\frac{1}{2}}
\\
&
\qquad\qquad\lesssim
|2^jB|^{-\frac{1}{2}}\left(\iint_{\mathcal{R}(\overline{C_j(B)})}\left|t^2Le^{-t^2L}
A_{r_B^2}f(y)\right|^2\frac{dy \, dt}{t}\right)^{\frac{1}{2}}
\\
&
\qquad\qquad\lesssim
|2^jB|^{-\frac{1}{2}}\left(\int_{\mathbb{R}^n\setminus 2^{j-1}B}\int_{0}^{\infty}\left|t^2Le^{-t^2L}
A_{r_B^2}f(y)\right|^2\frac{dt \, dy}{t}\right)^{\frac{1}{2}}
\\
&
\hskip4cm
+ |2^jB|^{-\frac{1}{2}}\left(\int_{2^{j-1}\,B}\int_{2^{j-1} r_B}^{\infty}
\left|t^2Le^{-t^2L}
A_{r_B^2}f(y)\right|^2\frac{dt \, dy}{t}\right)^{\frac{1}{2}}
\\
&
\qquad\qquad=: |2^jB|^{-\frac{1}{2}}(I+II).
\end{align*}
We estimate each term in turn. Before that, let us remind the following off-diagonal estimate obtained in \cite[p. 504]{HofmannMartell}:
\begin{equation}\label{off-diff-semi}
\left\|\frac{s^2}{t^2}\big( e^{-s^2L}-e^{-(s^2+t^2)L}\big)(f\,\chi_E)\right\|_{L^2(F)}
\le
C\,  e^{-c\,\frac{d(E,F)^2}{s^2}}\,\|f\|_{L^2(E)}, \qquad 0<t\le s,
\end{equation}
with $C$ independent of $t$ and $s$. This and Lemma \ref{lemma:composition} imply that for every $M\ge 1$ there exists $C$ such that for every $0<t\le s$ there holds
\begin{equation}\label{off-diff-semi:2}
\left\|s^2 L\,e^{-s^2 L}\left(\frac{s^2}{t^2}\right)^M\big(e^{-s^2L}-e^{-(s^2+t^2)L}\big)^M(f\,\chi_E)\right\|_{L^2(F)}
\le
C\,  s^{-n\left(\frac{1}{p_0}-\frac{1}{2}\right)} e^{-c\,\frac{d(E,F)^2}{s^2}}\,\|f\|_{L^{p_0}(E)}.
\end{equation}

After these preparations we estimate $II$.   Doing the change of variables $t=\sqrt{M+1}\,s$ and using \eqref{off-diff-semi:2}, easy calculations lead to obtain \begin{align*}
II
&
\lesssim
\left(\int_{c 2^{j}r_B}^{\infty}
\left\|s^2 L e^{-s^2L}
\big(e^{-s^2 L}-e^{-(s^2+r_B^2)L}\big)^Mf\right\|_{L^2(2^{j-1}\,B)}^2\frac{ds}{s}\right)^{\frac{1}{2}}
\\
&
\lesssim
\left(\int_{c2^jr_B}^{\infty} \left(\frac{r_B}{s}\right)^{4M} s^{-2n\left(\frac{1}{p_0}-\frac{1}{2}\right)}\frac{ds}{s}\right)^{\frac{1}{2}}\,\|f\|_{L^{p_0}(B)}
\\
&
\lesssim
2^{-j\,(2M+\frac{n}{p_0})}\,|2^jB|^{\frac12}\,\left(\dashint_{B}|f(x)|^{p_0} \, dx\right)^{\frac{1}{p_0}}.
\end{align*}

Let us next estimate $I$. We proceed as in \cite{HofmannMartell} or \cite[p. 53-56]{HofmannMayboroda}. Change variables as before to obtain
\begin{multline*}
I
\lesssim
\left(\int_{0}^{r_B}
\left\|
s^2Le^{-(M+1)s^2L} \big(I-e^{-r_B^2L}\big)^Mf
\right\|_{L^2(\mathbb{R}^n\setminus 2^{j-1}B)}^2\frac{dt}{t}
\right)^{\frac{1}{2}}
\\
+
\left(\int_{r_B}^{\infty}
\left\|s^2 L e^{-s^2L}
\big(e^{-s^2 L}-e^{-(s^2+r_B^2)L}\big)^M f \right\|_{L^2(\mathbb{R}^n\setminus 2^{j-1}B)}^2\frac{dt}{t}\right)^{\frac{1}{2}}
=: I_1+I_2.
\end{multline*}
For $I_2$, employ \eqref{off-diff-semi:2} and conclude that
\begin{align*}
I_2
&
\lesssim
\left(\int_{r_B}^{\infty} \left(\frac{r_B}{s}\right)^{4M} s^{-2n\left(\frac{1}{p_0}-\frac{1}{2}\right)} e^{-c\,\frac{4^j\,r_B^2}{s^2}}\frac{ds}{s}\right)^{\frac{1}{2}}\,\|f\|_{L^{p_0}(B)}
\lesssim
2^{-j\,(2M+\frac{n}{p_0})}\,|2^jB|^{\frac12}\,\left(\dashint_{B}|f(x)|^{p_0} \, dx\right)^{\frac{1}{p_0}}.
\end{align*}
For $I_1$, expand $\big(I-e^{-r_B^2L}\big)^M$ and use  the $L^{p_0}-L^2$ off-diagonal estimates satisfied by the Heat semigroup:
\begin{align*}
I_1
&
\lesssim
\left(\int_{0}^{r_B}
\left\|
s^2Le^{-(M+1)s^2L} f\right\|_{L^2(\mathbb{R}^n\setminus 2^{j-1}B)}^2\frac{ds}{s}
\right)^{\frac{1}{2}}
\\
&\quad
+
\sup_{1\le k\le M}\left(\int_{0}^{r_B}
\left\|
s^2Le^{-\big((M+1)s^2+kr_B^2\big)L}f
\right\|_{L^2(\mathbb{R}^n\setminus 2^{j-1}B)}^2\frac{ds}{s}
\right)^{\frac{1}{2}}
\\
&\lesssim
\left(\int_{0}^{r_B} s^{-2n\left(\frac{1}{p_0}-\frac{1}{2}\right)} e^{-c\,\frac{4^j\,r_B^2}{s^2}}\frac{ds}{s}
\right)^{\frac{1}{2}}\|f\|_{L^{p_0}(B)}
\\
&\quad
+
\sup_{1\le k\le M}\left(\int_{0}^{r_B}
\left(\frac{s^2}{(M+1)s^2+kr_B^2}\right)^{2}\, \big((M+1)s^2+kr_B^2\big)^{-n\left(\frac{1}{p_0}-\frac{1}{2}\right)} e^{-c\,\frac{4^j\,r_B^2}{(M+1)s^2+kr_B^2}}
\frac{ds}{s}
\right)^{\frac{1}{2}}\|f\|_{L^{p_0}(B)}
\\
\\
&\lesssim
e^{-c\,4^j}\,|2^jB|^{\frac12}\left(\dashint_{B}|f(x)|^{p_0} \, dx\right)^{\frac{1}{p_0}}
+
e^{-c\,4^j}\,|2^jB|^{\frac12}\left(\dashint_{B}|f(x)|^{p_0} \, dx\right)^{\frac{1}{p_0}}\,
\left(\int_{0}^{r_B}
\left(\frac{s^2}{r_B^2}\right)^{2}\, \frac{ds}{s}
\right)^{\frac{1}{2}}
\\
&\lesssim
e^{-c\,4^j}\,|2^jB|^{\frac12}\left(\dashint_{B}|f(x)|^{p_0} \, dx\right)^{\frac{1}{p_0}}.
\end{align*}
Gathering all the estimates that we have obtained we complete the proof of  \eqref{241}:
\begin{align*}
\left(\dashint_{C_j(B)}\left|\Scal_{\hh} A_{r_B^2}f(x)\right|^{p_0} \, dx\right)^{\frac{1}{p_0}}
\leq
C
2^{-j\,(2M+\frac{n}{p_0})}\,\left(\dashint_{B}|f(x)|^{p_0} \, dx\right)^{\frac{1}{p_0}}.
\end{align*}

\medskip

To prove \eqref{242}, we use that $\{e^{-t^2L}\}_{t>0}\in \mathcal{F}_\infty(L^{p_0}\rightarrow L^2)$ and for every $j\ge 1$
\begin{align*}
\left(\dashint_{C_j(B)}|I-(I-e^{-r_B^2L})^Mf|^2\right)^{\frac{1}{2}}
\leq \sum_{k=1}^{M}C_{k,M}\left(\dashint_{C_j(B)}|e^{-kr_B^2L}f|^2\right)^{\frac{1}{2}}
\lesssim e^{-c4^j} \left(\dashint_B |f(x)|^{p_0}\,dx\right)^{\frac{1}{p_0}}.
\end{align*}
\end{proof}

\subsubsection{Proof of Theorem \ref{thm:SF-Heat}, part $(b)$}

Take $w\in A_{\infty}$, $m\in \N$, and $f\in L^{\infty}_c(\mathbb{R}^n)$, and apply Theorem \ref{theor:control-SF-Heat} (see below for its proof). Then, for all $0<p<\infty$,
$$
\|\Grm_{m,\hh}f\|_{L^p(w)}\lesssim \|\Scal_{\hh}f\|_{L^p(w)}, \quad \|\Gcal_{m,\hh}f\|_{L^p(w)}\lesssim \|\Scal_{\hh}f\|_{L^p(w)},\quad \textrm{and}\quad \|\Scal_{m,\hh}f\|_{L^p(w)}\lesssim \|\Scal_{\hh}f\|_{L^p(w)}.
$$
Now, use Theorem \ref{thm:SF-Heat} part $(a)$ to conclude that for all $p\in \mathcal{W}_w(p_-(L),\infty)$
$$\|\Grm_{m,\hh}f\|_{L^p(w)}\lesssim \|f\|_{L^p(w)}, \quad \|\Gcal_{m,\hh}f\|_{L^p(w)}\lesssim \|f\|_{L^p(w)},\quad \textrm{and}\quad \|\Scal_{m,\hh}f\|_{L^p(w)}\lesssim \|f\|_{L^p(w)},$$
for all $f\in L^{\infty}_c(\mathbb{R}^n)$. By a standard density argument these estimates easily extend to all functions $f\in L^p(w)$.\qed


\subsection{Proof of Theorem \ref{thm:SF-Poisson}}


\subsubsection{Proof of Theorem \ref{thm:SF-Poisson}, part $(a)$}
From Theorem \ref{theor:control-SF-Poisson} part $(b)$ (see below for its proof), given $w\in A_{\infty}$, we have for all $K\in \N$, $p\in  \mathcal{W}_w(0,p_+(L)^{K,*})$, and $f\in L^{\infty}_c(\mathbb{R}^n)$
$$
\|\Scal_{K,\pp}f\|_{L^p(w)}\lesssim \|\Scal_{\hh}f\|_{L^p(w)}.
$$
Hence, applying Theorem \ref{thm:SF-Heat} part $(a)$, we obtain, for all $p\in \mathcal{W}_w(p_-(L),p_+(L)^{K,*})$,
$$
\|\Scal_{K,\pp}f\|_{L^p(w)}\lesssim \|f\|_{L^p(w)}, \qquad f\in L^\infty_c(\R^n).
$$
A density argument allows us to complete the proof.
\qed

\subsubsection{Proof of Theorem \ref{thm:SF-Poisson}, part $(b)$}
Take $w\in A_{\infty}$ and apply Theorem \ref{theor:control-SF-Poisson} parts $(a)$, $(c)$, and $(d)$ (the proof of this result is given below) to obtain, for all $K\in \N$, $p\in \mathcal{W}_w(0,p_+(L)^{K,*})$, and $f\in L^{\infty}_c(\R^n)$
$$
\|\Grm_{K,\pp}f\|_{L^p(w)}\lesssim\|\Scal_{\hh}f\|_{L^p(w)}\quad \textrm{and}\quad \|\Gcal_{K,\pp}f\|_{L^p(w)}\lesssim \|\Scal_{\hh}f\|_{L^p(w)},
$$
and
$$
\|\Grm_{\pp}f\|_{L^p(w)}\lesssim\|\Gcal_{\hh}f\|_{L^p(w)}\quad \textrm{and}\quad \|\Gcal_{\pp}f\|_{L^p(w)}\lesssim \|\Gcal_{\hh}f\|_{L^p(w)}.$$
Now, apply  Theorem \ref{thm:SF-Heat}, part $(a)$, and conclude, by a density argument, that
for all $K\in \N_0$, $p\in \mathcal{W}_w(p_-(L),p_+(L)^{K,*})$, and $f\in L^p(w)$, there hold
$$\|\Grm_{K,\pp}f\|_{L^p(w)}\lesssim\|f\|_{L^p(w)}\quad \textrm{and}\quad \|\Gcal_{K,\pp}f\|_{L^p(w)}\lesssim \|f\|_{L^p(w)}.$$
\qed


\subsection{Proof of Theorem \ref{theor:control-SF-Heat}}

We first note that part $(a)$ is trivial.

%

\subsubsection{Proof of Theorem \ref{theor:control-SF-Heat}, part $(b)$}

For $m=1$ there is nothing to prove. So, take $m\in \N$ such that $m\geq 2$ and consider
$$
T_{\frac{t^2}{2}}:=\left(\tfrac{t^2}{2}L\right)^{m-1} e^{-\frac{t^2}{2}L}.
$$
Fix $0<p<\infty$ and $w\in A_{\infty}$. Pick $r\geq \max\{\frac{p}{2},r_w\}$ so that $w\in A_{r}$ and $0<p\leq 2r$. Then, from $\{(t^2L)^me^{-t^2L}\}_{t>0}\in \mathcal{F}_\infty(L^{2}\rightarrow L^2)$ and applying
 Proposition \ref{prop:alpha} in the next-to-last inequality, we have
\begin{align*}
\|\Scal_{m,\hh}f\|_{L^p(w)}
&
\lesssim \left(\int_{\mathbb{R}^n}\left(\iint_{\Gamma(x)}\left|T_{\frac{t^2}{2}}\left( \tfrac{t^2}{2}Le^{-\frac{t^2}{2}L}f\right)(y)\right|^2 \frac{dy \, dt}{t^{n+1}}\right)^{\frac{p}{2}} w(x) \, dx \right)^{\frac{1}{p}}
\\
& \lesssim  \sum_{j\geq 1} \left(\int_{\mathbb{R}^n}\left(\int_0^{\infty}\int_{B(x,t)}\left|T_{\tfrac{t^2}{2}}\left( \big(\tfrac{t^2}{2}Le^{-\frac{t^2}{2}L}f\big) \chi_{C_j(B(x,t))}\right)(y)\right|^2 \frac{dy \, dt}{t^{n+1}}\right)^{\frac{p}{2}} w(x) \, dx \right)^{\frac{1}{p}}
\\
& \lesssim \sum_{j\geq 1} e^{-c4^j} \left(\int_{\mathbb{R}^n}\left(\int_0^{\infty}\int_{B(x,2^{j+1}t)}\left|\tfrac{t^2}{2}L e^{-\frac{t^2}{2}L}f(y)\right|^2 \frac{dy \, dt}{t^{n+1}}\right)^{\frac{p}{2}} w(x) \, dx \right)^{\frac{1}{p}}
\\
& \lesssim \sum_{j\geq 1} e^{-c4^j} \left(\int_{\mathbb{R}^n}\left(\int_0^{\infty}\int_{B(x,2^{j+1}\sqrt{2}t)}\left|t^2L e^{-t^2L}f(y)\right|^2 \frac{dy \, dt}{t^{n+1}}\right)^{\frac{p}{2}} w(x) \, dx \right)^{\frac{1}{p}}
\\
&\lesssim \sum_{j\geq 1}2^{j\frac{nr}{p}} e^{-c4^j}\left(\int_{\mathbb{R}^n}\left(\int_0^{\infty}\int_{B(x,t)}\left|t^2L e^{-t^2L}f(y)\right|^2 \frac{dy \, dt}{t^{n+1}}\right)^{\frac{p}{2}} w(x) \, dx \right)^{\frac{1}{p}}
\\
&\lesssim \|\Scal_{\hh}f\|_{L^p(w)}. \qed
\end{align*}

\subsubsection{Proof of Theorem \ref{theor:control-SF-Heat}, part $(c)$}

Take $m\in \N$ and consider
$$
A_{\frac{t^2}{2}}:=\tfrac{t}{\sqrt{2}}\nabla_{y,t} e^{-\frac{t^2}{2}L}
\quad \textrm{and} \quad B_{\frac{t^2}{2},m}:=\left(\tfrac{t^2}{2}L\right)^me^{-\frac{t^2}{2}L}.
$$
Fix $0<p<\infty$ and $w\in A_{\infty}$. Pick $r\geq\max\{\frac{p}{2},r_w\}$ so that $w\in A_{r}$ and $0<p\leq 2r$. Then,
applying the $L^2-L^2$ off-diagonal estimates satisfied by $\{t\nabla_{y,t}e^{-t^2L}\}_{t>0}$ and Proposition \ref{prop:alpha}, we obtain
\begin{align*}
\|\Gcal_{m,\hh}f\|_{L^p(w)}
&
\lesssim \left(\int_{\mathbb{R}^n}\left(\int_0^{\infty}\int_{B(x,t)}\left|A_{\frac{t^2}{2}}
B_{\frac{t^2}{2},m}f(y)\right|^2 \frac{dy \, dt}{t^{n+1}}\right)^{\frac{p}{2}} \ w(x) \, dx\right)^{\frac{1}{p}}
\\
&   \lesssim \sum_{j\geq 1} \left(\int_{\mathbb{R}^n}\left(\int_0^{\infty}\int_{B(x,t)}\left|A_{\frac{t^2}{2}}
\left(\big(B_{\frac{t^2}{2},m}f\big)\chi_{C_j(B(x,t))}\right)(y)\right|^2 \frac{dy \, dt}{t^{n+1}}\right)^{\frac{p}{2}} \ w(x) \, dx\right)^{\frac{1}{p}}
\\
& \lesssim \sum_{j\geq 1} e^{-c4^j} \left(\int_{\mathbb{R}^n}\left(\int_0^{\infty}\int_{B(x,2^{j+1}t)}\left|B_{\frac{t^2}{2},m}f(y)\right|^2 \frac{dy \, dt}{t^{n+1}}\right)^{\frac{p}{2}} \ w(x) \, dx\right)^{\frac{1}{p}}
\\
& \lesssim  \sum_{j\geq 1} e^{-c4^j} \left(\int_{\mathbb{R}^n}\left(\int_0^{\infty}\int_{B(x,2^{j+1}\sqrt{2}t)}\left|
B_{t^2,m}f(y)\right|^2 \frac{dy \, dt}{t^{n+1}}\right)^{\frac{p}{2}} \ w(x) \, dx\right)^{\frac{1}{p}}
\\
&\lesssim  \sum_{j\geq 1}e^{-c4^j} 2^{j\frac{nr}{p}} \|\Scal_{m,\hh}f\|_{L^p(w)}
\\
&\lesssim  \|\Scal_{\hh}f\|_{L^p(w)},
\end{align*}
where in the last inequality we have used part $(b)$. \qed


\subsection{Proof of Theorem \ref{theor:control-SF-Poisson}}

We first note that part $(a)$ is trivial.


\subsubsection{Proof of Theorem \ref{theor:control-SF-Poisson}, part $(b)$}

In view of Theorem \ref{theor:control-SF-Heat}, part $(b)$, and Lemma \ref{lemma:extrapol}, it is enough to show that
\begin{align}\label{SKP}
\|\Scal_{K,\pp}f\|_{L^p(w)}\lesssim\|\Scal_{K,\hh}f\|_{L^p(w)},
\end{align}
for all $w\in A_{\infty}$, $K\in \N$, and $p\in \mathcal{W}_{w}(0,p_+(L)^{K,*})$, with $1\le p$.
Set
$$
B_{t,K}:=\left(t^2L\right)^{K}
e^{-t^2L},
$$
and apply the subordination formula \eqref{FR} and Minkowski's inequality:
\begin{align*}
\left\|\Scal_{K,\pp}f\right\|_{L^p(w)}
&  \lesssim \left(\int_{\mathbb{R}^n}\left(\iint_{\Gamma(x)}\left|(t^2L)^{K}\int_0^{\infty}
e^{-u}u^{\frac{1}{2}}
e^{-\frac{t^2}{4u}L}f(y) \ \frac{du}{u}\right|^2 \ \frac{dy \, dt}{t^{n+1}}\right)^{\frac{p}{2}} \ w(x) \, dx\right)^{\frac{1}{p}}
\\
&   \lesssim \int_0^{\infty}e^{-u}u^{\frac{1}{2}}\left(\int_{\mathbb{R}^n}
\left(\int_0^{\infty}\int_{B(x,t)}\left|(t^2L)^{K}e^{-\frac{t^2}{4u}L}f(y)\right|^2 \ \frac{dy \, dt}{t^{n+1}}\right)^{\frac{p}{2}} w(x) \, dx\right)^{\frac{1}{p}}\ \frac{du}{u}
\\
& \lesssim\int_0^{\frac{1}{4}}e^{-u}u^{K+\frac{1}{2}}\left(\int_{\mathbb{R}^n}
\left(\int_0^{\infty}\int_{B(x,t)}\left|B_{\frac{t}{2\sqrt{u}},K}f(y)\right|^2 \ \frac{dy \, dt}{t^{n+1}} \right)^{\frac{p}{2}} \ w(x) \, dx\right)^{\frac{1}{p}} \ \frac{du}{u}
\\
&\qquad +\int_{\frac{1}{4}}^{\infty}e^{-u}u^{K+\frac{1}{2}}\left(\int_{\mathbb{R}^n}
\left(\int_0^{\infty}\int_{B(x,t)}\left|B_{\frac{t}{2\sqrt{u}},K}f(y)\right|^2 \ \frac{dy \, dt}{t^{n+1}} \right)^{\frac{p}{2}} \ w(x) \, dx\right)^{\frac{1}{p}} \ \frac{du}{u}
\\
& =:I+II.
\end{align*}
For $I$, fix $2<\widetilde{q}<\infty$ and apply Jensen's inequality to the integral in $y$. Then,
\begin{multline}\label{estI:1}
I
\lesssim \int_0^{\frac{1}{4}}u^{K+\frac{1}{2}} \ \left(\int_{\mathbb{R}^n}
\left(\int_0^{\infty}\left(\int_{B(x,t)}\left|B_{\frac{t}{2\sqrt{u}},K}f(y)\right|^{\widetilde{q}} dy \right)^{\frac{2}{\widetilde{q}}} \ \frac{dt}{t^{\frac{2n}{\widetilde{q}}+1}} \right)^{\frac{p}{2}} \ w(x) \, dx\right)^{\frac{1}{p}} \frac{du}{u}
\\
=:
\int_0^{\frac{1}{4}}u^{K+\frac{1}{2}} \ \left(\int_{\mathbb{R}^n} \mathcal{J}(u,x)^p\,w(x)\,dx\right)^{\frac{1}{p}} \frac{du}{u}
\
.
\end{multline}
Fix $0<u<\frac14$. Note that since $1<\frac{\widetilde{q}}{2}<\infty$, for $\alpha:=2\sqrt{u}\in (0,1]$ and $q:=\frac{\widetilde{q}}{2}$ we can apply Proposition \ref{prop:Q} and conclude, for all $1<\frac{\widetilde{q}}{2}\leq s<\infty$  and $w_0\in RH_{s'}$,
\begin{align}\label{KKS}
\int_{\mathbb{R}^n}\mathcal{J}(u,x)^2w_0(x)dx
&=
\int_0^{\infty}\int_{\mathbb{R}^n}\left(\int_{B(x,2\sqrt{u}\frac{t}{2\sqrt{u}})}\left|B_{\frac{t}{2\sqrt{u}},K}f(y)\right|^{\widetilde{q}} dy \right)^{\frac{2}{\widetilde{q}}} \ w_0(x)dx\frac{dt}{t^{\frac{2n}{\widetilde{q}}+1}}
\\
\nonumber
&\lesssim u^{\frac{n}{2s}}
\int_{\mathbb{R}^n}\int_0^{\infty}\left(\int_{B(x,\frac{t}{2\sqrt{u}})}\left|B_{\frac{t}{2\sqrt{u}},K}f(y)\right|^{\widetilde{q}} dy \right)^{\frac{2}{\widetilde{q}}}\frac{dt}{t^{\frac{2n}{\widetilde{q}}+1}} \ w_0(x)dx
\\
\nonumber
&\lesssim u^{\frac{n}{2s}-\frac{n}{\widetilde{q}}}
\int_{\mathbb{R}^n} \int_0^{\infty} \left(\int_{B(x,t)}\left|B_{t,K}f(x)\right|^{\widetilde{q}} dy \right)^{\frac{2}{\widetilde{q}}} \frac{dt}{t^{\frac{2n}{\widetilde{q}}+1}}w_0(x)dx
\\
\nonumber
&=: u^{\frac{n}{2s}-\frac{n}{\widetilde{q}}}
\int_{\mathbb{R}^n} \mathcal{T}(x)^2\ w_0(x)dx,
\end{align}
where in the last inequality we have changed the variable $t$ into $2\sqrt{u}t$.
Assuming further that $2<\widetilde{q}<p_+(L)$ and applying $L^2-L^{\widetilde{q}}$ off-diagonal estimates and Proposition \ref{prop:alpha}, we can bound the last integral above as follows
\begin{align*}
\int_{\mathbb{R}^n} \mathcal{T}(x)^2\ w_0(x)dx
&\lesssim
\int_{\mathbb{R}^n} \int_0^{\infty} \left(\int_{B(x,t)}\left|e^{-\frac{t^2}{2}L}B_{\frac{t}{\sqrt{2}},K}f(x)\right|^{\widetilde{q}} dy \right)^{\frac{2}{\widetilde{q}}} \frac{dt}{t^{\frac{2n}{\widetilde{q}}+1}}w_0(x)dx\nonumber
\\
&\lesssim \sum_{j\geq 1}e^{-c4^j}
\int_{\mathbb{R}^n} \int_0^{\infty} \int_{B(x,2^{j+1}t)}\left|B_{\frac{t}{\sqrt{2}},K}f(x)\right|^2 \frac{dy \, dt}{t^{n+1}}w_0(x)dx\nonumber
\\
&\lesssim \sum_{j\geq 1}e^{-c4^j}
\int_{\mathbb{R}^n} \int_0^{\infty} \int_{B(x,2^{j+1}\sqrt{2}t)}\left|B_{t,K}f(x)\right|^2 \frac{dy \, dt}{t^{n+1}}w_0(x)dx\nonumber
\\
&\lesssim \sum_{j\geq 1}2^{jnr}e^{-c4^j}
\|\Scal_{K,\hh}f\|_{L^2(w_0)}^2\nonumber
\\
&\lesssim
\|\Scal_{K,\hh}f\|_{L^2(w_0)}^2,
\end{align*}
where $r>r_{w_{0}}$. This and \eqref{KKS} yield, for all $2<\widetilde{q}<p_+(L)$, $\frac{\widetilde{q}}{2}\leq s<\infty$, and $w_0\in RH_{s'}$,
\begin{align*}
\int_{\mathbb{R}^n}\left(\mathcal{J}(u,x)^{2s}\right)^{\frac{1}{s}}w_0(x)dx\lesssim
\int_{\mathbb{R}^n}\left(u^{s\gamma(s,\widetilde{q})}\Scal_{K,\hh}f(x)^{2s}\right)^{\frac{1}{s}}w_0(x)dx,
\end{align*}
where $\gamma(s,\widetilde{q}):=\frac{n}{2s}-\frac{n}{\widetilde{q}}.$
Next, we apply Lemma \ref{lemma:extrapol}, part $(b)$, for $q_0:=s$ and for the pairs of functions $\big(\mathcal{J}(u,x)^{2s},u^{s\gamma(s,\widetilde{q})}\Scal_{K,\hh}f(x)^{2s}\big)$. Hence,  for all $2<\widetilde{q}<p_+(L)$, $\frac{\widetilde{q}}{2}\leq s<\infty$, $1<q<\infty$, and $\widetilde{w}\in RH_{q'}$,
\begin{align}\label{KKS1}
\int_{\mathbb{R}^n}\mathcal{J}(u,x)^{\frac{2s}{q}} \widetilde{w}(x)dx \lesssim u^{\frac{s\gamma(s,\widetilde{q})}{q}}
\int_{\mathbb{R}^n}\Scal_{K,\hh}f(x)^{\frac{2s}{q}} \widetilde{w}(x)dx.
\end{align}

\medskip

We now distinguish two cases. Assume first that $n\leq (2K+1)p_+(L)$. Under this assumption, for every $0<p<\infty$ and $w\in A_{\infty}$, we take $s>s_{w}\max\left\{\frac{p}{2},1\right\}$, $\max\left\{2,\frac{2sp_+(L)}{p_+(L)+2s}\right\}<\widetilde{q}<\min\left\{p_+(L),2s\right\}$, (if $p_+(L)=\infty$ take $\widetilde{q}:=2s$),
and $q:=\frac{2s}{p}.$ Then, we have that $2<\widetilde{q}<p_+(L)$, $\frac{\widetilde{q}}{2}\leq s<\infty$, $1\leq s_{w}<q<\infty$, and $w\in RH_{q'}$. Hence, applying \eqref{KKS1}, we obtain
\begin{align}\label{estI:2}
\int_{\mathbb{R}^n}\mathcal{J}(u,x)^{p} w(x)dx\lesssim u^{\frac{np}{4s}-\frac{np}{2\widetilde{q}}}
\int_{\mathbb{R}^n}\Scal_{K,\hh}f(x)^{p} w(x)dx.
\end{align}
Besides, note that from our choices of $s$ and $\widetilde{q}$, we have that
\begin{align*}
K+\frac{1}{2}+\frac{n}{4s}-\frac{n}{2\widetilde{q}}>K+\frac{1}{2}-\frac{n}{2p_+(L)}\geq 0.
\end{align*}
Consequently, plugging \eqref{estI:2} into \eqref{estI:1} we obtain
\begin{align}\label{FT}
I\lesssim \int_{0}^{\frac{1}{4}} u^{K+\frac{1}{2}+\frac{n}{4s}-\frac{n}{2\widetilde{q}}}\frac{du}{u}\,
\|\Scal_{K,\hh}f\|_{L^p(w)}
\lesssim \|\Scal_{K,\hh}f\|_{L^p(w)}.
\end{align}
Consider now the case $n>(2K+1)p_+(L)$. Fix $w\in A_{\infty}$ and $p\in \mathcal{W}_w(0,p_+(L)^{K,*})$. Then $w\in RH_{\left(\frac{p_+(L)^{K,*}}{p}\right)'}$ and $0<p<\frac{p_+(L)n}{s_w(n-(2K+1)p_+(L))}$. Therefore,
it is possible to pick $\varepsilon_1>0$ small enough and $2<\widetilde{q}<p_+(L)$ so that
$$
 0<p<\frac{\widetilde{q}n}{s_{w}(1+\varepsilon_1)(n-(2K+1)\widetilde{q})}.
$$
Besides, since $
\widetilde{q}<\widetilde{q}n/ (n-(2K+1)\widetilde{q})$ there also exists $\varepsilon_2>0$ so that
$$
\widetilde{q}<\frac{\widetilde{q}n}{(1+\varepsilon_2)(n-(2K+1)\widetilde{q})}.
$$
Take $\varepsilon_0:=\min\{\varepsilon_1,\varepsilon_2\}$, $s:=\frac{\widetilde{q}n}{2(1+\varepsilon_0)(n-(2K+1)\widetilde{q})}$,
and $q:=\frac{2s}{p}$. Then our choices guarantee that $2<\widetilde{q}<p_+(L)$, $\frac{\widetilde{q}}{2}\leq s<\infty$, $1\leq s_{w}<q<\infty$, and $w\in RH_{q'}$. Therefore, we can apply \eqref{KKS1} and obtain
\begin{align}\label{estI:3}
\int_{\mathbb{R}^n}\mathcal{J}(u,x)^{p} w(x)dx\lesssim u^{\frac{np}{4s}-\frac{np}{2\widetilde{q}}}
\int_{\mathbb{R}^n}\Scal_{K,\hh}f(x)^{p} w(x)dx.
\end{align}
Again, our choices of $s$ and $\widetilde{q}$ imply
\begin{align*}
K+\frac{1}{2}+\frac{n}{4s}-\frac{n}{2\widetilde{q}}
=
\varepsilon_0\left(\frac{n}{2\widetilde{q}}-K-\frac{1}{2}\right)>\varepsilon_0\left(\frac{n}{2p_+(L)}-K-\frac{1}{2}\right)>0.
\end{align*}
This, \eqref{estI:1}, and \eqref{estI:3} give
\begin{align*}
I\lesssim \int_{0}^{\frac{1}{4}} u^{K+\frac{1}{2}+\frac{n}{4s}-\frac{n}{2\widetilde{q}}}\frac{du}{u}\,
\|\Scal_{K,\hh}f\|_{L^p(w)}
\lesssim \|\Scal_{K,\hh}f\|_{L^p(w)}.
\end{align*}
Gathering this estimate and \eqref{FT}, we conclude that, for all $w\in A_{\infty}$ and $p\in \mathcal{W}_w(0,p_+(L)^{K,*})$ with $1\le p$,
$$
I\lesssim \|\Scal_{K,\hh}f\|_{L^p(w)}.
$$

\medskip

We finally estimate $II$. Take $F(y,t)=B_{t,K}f(y)$ and pick $r\geq\max\{\frac{p}{2},r_w\}$ so that $w\in A_r$ and $1\le p\leq 2r$. Hence, we have
\begin{align*}
II
\lesssim\int_{\frac{1}{4}}^{\infty}e^{-u}u^{K+\frac{1}{2}-\frac{n}{4}} \|\mathcal{A}^{2\sqrt{u}}F\|_{L^p(w)} \frac{du}{u}
\lesssim\int_{\frac{1}{4}}^{\infty}e^{-u}u^{K+\frac{1}{2}+\frac{nr}{p}-\frac{n}{4}} \|\mathcal{A}F\|_{L^p(w)} \frac{du}{u}
\lesssim \|\Scal_{K,\hh}f\|_{L^p(w)}.
\end{align*}
This estimate, together with the one obtained for $I$ and the observations made at the beginning of the proof, allows us to finish the proof. \qed

\subsubsection{Proof of Theorem \ref{theor:control-SF-Poisson}, parts $(c)$ and $(d)$}


We first invoke \cite[Lemma $3.5$]{AuscherHofmannMartell}:  for every $K\in \N_0$, $f\in L^2(\mathbb{R}^n)$ and $x\in \mathbb{R}^n$, there holds
\begin{align}\label{GKP}
\Gcal_{K,\pp}f(x)&\lesssim K\left(\int_0^{\infty}\int_{B(x,2t)}|(t^2L)^K e^{-t^2L}f(y)|^2 \frac{dy \, dt}{t^{n+1}}\right)^{\frac{1}{2}}
\\
  \nonumber
&\qquad\quad + \left(\int_0^{\infty}\int_{B(x,2t)}|t\nabla_{y,t}(t^2L)^K e^{-t^2L}f(y)|^2 \frac{dy \, dt}{t^{n+1}}\right)^{\frac{1}{2}}
 \\
 \nonumber
 &\qquad\quad+\left(\int_0^{\infty}\int_{B(x,2t)}|(t^2L)^K(e^{-t\sqrt{L}}
 -e^{-t^2L})f(y)|^2 \frac{dy \, dt}{t^{n+1}}\right)^{\frac{1}{2}}.
\end{align}
The first and second term in the right-hand side of the above inequality will be easily controlled in $L^p(w)$, applying Proposition \ref{prop:alpha}, by $\Scal_{K,\hh}f$ and $\Gcal_{K,\hh}f$ respectively. So, we just need to deal with the third term. To this end we define
\begin{align*}
\mathfrak{G}_{K,\pp}f(x):=\left(\int_0^{\infty}\int_{B(x,2t)}|(t^2L)^K(e^{-t\sqrt{L}}
 -e^{-t^2L})f(y)|^2 \frac{dy \, dt}{t^{n+1}}\right)^{\frac{1}{2}}.
\end{align*}
We claim that for all $K\in \N_0$, $w\in A_{\infty}$, and $p\in \mathcal{W}_w(0,p_+(L)^{K,*})$, the following estimate holds:
\begin{align}\label{est:gfrak}
\|\mathfrak{G}_{K,\pp}f\|_{L^p(w)}\lesssim \|\Scal_{K+1,\hh}f\|_{L^p(w)} .
\end{align}
Assuming this momentarily and applying Proposition \ref{prop:alpha} to the first two terms in the right-hand side of \eqref{GKP}
we conclude, for all $K\in \N_0$, $w\in A_{\infty}$, and $p\in \mathcal{W}_{w}(0,p_+(L)^{K,*})$,
\begin{align}\label{2GKP}
\|\Gcal_{K,\pp}f\|_{L^p(w)}\lesssim K\|\Scal_{K,\hh}f\|_{L^p(w)}+\|\Gcal_{K,\hh}f\|_{L^p(w)}+\|\Scal_{K+1,\hh}f\|_{L^p(w)}.
\end{align}
For $K\in \N$, apply Theorem\ref{theor:control-SF-Heat} parts $(b)$ and $(c)$. This proves part $(d)$. To obtain part $(c)$, we take $K=0$
in \eqref{2GKP}. Note that clearly $\Scal_{\hh}f\le \frac12\Gcal_{\hh}f$ and therefore
$$
\|\Gcal_{\pp}f\|_{L^p(w)}\lesssim \|\Gcal_{\hh}f\|_{L^p(w)}+\|\Scal_{\hh}f\|_{L^p(w)}\lesssim\|\Gcal_{\hh}f\|_{L^p(w)}.
$$

To complete the proof we need to obtain \eqref{est:gfrak}. Note that in view of Lemma \ref{lemma:extrapol} it is enough to prove \eqref{est:gfrak} for $p\geq 1$. Given $1\leq p<\infty$, apply the subordination formula \eqref{FR} and Minkowski's inequality:
\begin{multline*}
\|\mathfrak{G}_{K,\pp}f\|_{L^p(w)}
\lesssim \int_0^{\infty}e^{-u}u^{\frac{1}{2}}\left(\int_{\mathbb{R}^n} \left(\int_0^{\infty}\!\!\int_{B(x,2t)}|(t^2L)^K(e^{-\frac{t^2}{4u}L}
 -e^{-t^2L})f(y)|^2 \frac{dy \, dt}{t^{n+1}}\right)^{\frac{p}{2}}w(x)dx\right)^{\frac{1}{p}}\frac{du}{u}
\\
=: \int_0^{\infty}e^{-u}u^{\frac{1}{2}} F(u)\frac{du}{u}
\le \int_0^{\frac{1}{4}}u^{\frac{1}{2}}F(u)\frac{du}{u} +\int_{\frac{1}{4}}^{\infty}e^{-u}u^{\frac{1}{2}}F(u)\frac{du}{u}
=:I+II.
\end{multline*}

Fix $0<u<\frac{1}{4}$, and note that
\begin{align*}
\big|(e^{-\frac{t^2}{4u}L}
 -e^{-t^2L})f\big|
 \lesssim
 \int_{t}^{\frac{t}{2\sqrt{u}}}\big|r^2Le^{-r^2L}f\big|\frac{dr}{r}.
\end{align*}
We set $H_K(y,r):=(r^2L)^{K+1}  e^{-r^2L}f(y)$. Using the previous estimate and applying Minkowski's and Jensen's inequalities, it follows that
\begin{align*}
F(u)&\lesssim \left(\int_{\mathbb{R}^n} \left(\int_0^{\infty}\left(\int_{t}^{\frac{t}{2\sqrt{u}}}\left(\int_{B(x,2t)}|(t^2L)^K
 r^2L
 e^{-r^2L}f(y)|^{2}dy\right)^{\frac{1}{2}} \ \frac{dr}{r}\right)^2\frac{ \, dt}{t^{n+1}}\right)^{\frac{p}{2}}w(x)dx\right)^{\frac{1}{p}}
 \\
 &\lesssim u^{-\frac{1}{4}}\left(\int_{\mathbb{R}^n} \left(\int_0^{\infty}\int_{t}^{\frac{t}{2\sqrt{u}}}\int_{B(x,2t)}
| H_K(y,r)|^2 \Big(\frac{t}{r}\Big)^{4\,K}dy \ \frac{dr}{r^2}\frac{ \, dt}{t^{n}}\right)^{\frac{p}{2}} w(x)dx\right)^{\frac{1}{p}}
 \\
  &\lesssim u^{-\frac{1}{4}}\left(\int_{\mathbb{R}^n} \left(\int_0^{\infty}\int_{2\sqrt{u}r}^{r}\int_{B(x,2t)}
 | H_K(y,r)|^2 \Big(\frac{t}{r}\Big)^{4\,K}dy \frac{dt}{t^{n}}\frac{dr}{r^2}\right)^{\frac{p}{2}}
w(x)dx\right)^{\frac{1}{p}}.
\end{align*}
Take $2<\widetilde{q}<\infty$, apply Jensen's inequality to the integral in $y$, and change the variable $t$ into $rt$ to obtain
\begin{align*}
F(u) & \lesssim u^{-\frac{1}{4}}
\left(\int_{\mathbb{R}^n} \left(\int_0^{\infty}\int_{2\sqrt{u}r}^r
  \left(\int_{B(x,2t)} | H_K(y,r)|^{\widetilde{q}} dy\right)^{\frac{2}{\widetilde{q}}}  \Big(\frac{t}{r}\Big)^{4\,K} \frac{dt}{t^{\frac{2n}{\widetilde{q}}}}\frac{dr}{r^{2}}\right)^{\frac{p}{2}}
 w(x)dx\right)^{\frac{1}{p}}
 \\
 &\lesssim u^{-\frac{1}{4}}
\left(\int_{\mathbb{R}^n} \left(\int_0^{\infty}\int_{2\sqrt{u}}^{1}
  \left(\int_{B(x,2rt)}|H_K(y,r)|^{\widetilde{q}}dy\right)^{\frac{2}{\widetilde{q}}} t^{4K} \frac{dt}{t^{\frac{2n}{\widetilde{q}}}}\frac{dr}{r^{\frac{2n}{\widetilde{q}}+1}}\right)^{\frac{p}{2}}
 w(x)dx\right)^{\frac{1}{p}}
\\
&=:
 u^{-\frac{1}{4}}
\left(\int_{\mathbb{R}^n}
\widehat{H}(x,u)^p
w(x)dx\right)^{\frac{1}{p}}
 .
 \end{align*}
Note that $1<\frac{\widetilde{q}}{2}$. Then for  $\alpha:=t\in (0,1)$ and $q:=\frac{\widetilde{q}}{2}$ we can apply Proposition \ref{prop:Q} and obtain for all $\frac{\widetilde{q}}{2}\leq s<\infty$
and $w_0\in RH_{s'}$
\begin{align}\label{HG}
\int_{\mathbb{R}^n}\widehat{H}(x,u)^2 w_0(x) dx
&=\int_{2\sqrt{u}}^1\int_{0}^{\infty}\int_{\mathbb{R}^n}\left(\int_{B(x,2rt)}|H_K(y,r)|^{\widetilde{q}} dy\right)^{\frac{2}{\widetilde{q}}}w_0(x)dx\frac{dr}{r^{\frac{2n}{\widetilde{q}}+1}}t^{4K} \frac{dt}{t^{\frac{2n}{\widetilde{q}}}}
\\
\nonumber
&\lesssim \int_{2\sqrt{u}}^1t^{4K-\frac{2n}{\widetilde{q}}+\frac{n}{s}+1} \frac{dt}{t}\int_{0}^{\infty}\int_{\mathbb{R}^n}\left(\int_{B(x,2r)}|H_K(y,r)|^{\widetilde{q}} dy\right)^{\frac{2}{\widetilde{q}}}w_0(x)dx\frac{dr}{r^{\frac{2n}{\widetilde{q}}+1}}
\\
\nonumber
&=C(u,s,\widetilde{q})\int_{\mathbb{R}^n}\widetilde{H}_K(x)^2w_0(x)dx,
\end{align}
where
\begin{align*}
C(u,s,\widetilde{q}):=
\int_{2\sqrt{u}}^1 t^{\theta}\frac{dt}{t}
:=
\int_{2\sqrt{u}}^1t^{4K-\frac{2n}{\widetilde{q}}+\frac{n}{s}+1}\frac{dt}{t}
\end{align*}
and
$$
\widetilde{H}_K(x):= \left(\int_0^{\infty}
  \left(\int_{B(x,2r)}|H_K(y,r)|^{\widetilde{q}}dy\right)^{\frac{2}{\widetilde{q}}} \frac{dr}{r^{\frac{2n}{\widetilde{q}}+1}}\right)^{\frac{1}{2}}.
$$
If we further assume  $\widetilde{q}< p_+(L)$, then  $\{(r^2L)^{K+1}e^{-r^2L}\}_{r>0}\in\mathcal{F}_\infty(L^2\rightarrow L^{\widetilde{q}})$. Use that  $H_K(y,r)=2^{K+1}e^{-\frac{r^2}2 L}\,H_K\big(y,\tfrac{r}{\sqrt{2}}\big)$ and  apply Proposition \ref{prop:alpha} to obtain
\begin{align*}
\int_{\mathbb{R}^n}\widetilde{H}_K(x)^2w_0(x)dx&\lesssim \sum_{j\geq 1}e^{-c4^j}\int_{\mathbb{R}^n}\int_0^{\infty}\int_{B(x,2^{j+2}r)}\big|H_K\big(y,\tfrac{r}{\sqrt{2}}\big)\big|^2
\frac{dy \, dr}{r^{n+1}}w_0(x)dx
\\
&\lesssim \sum_{j\geq 1}e^{-c4^j}\int_{\mathbb{R}^n}\int_0^{\infty}\int_{B(x,2^{j+2}\sqrt{2}r)}\big|H_K(y,r)\big|^2
\frac{dy \, dr}{r^{n+1}}w_0(x)dx
\\
&\lesssim \sum_{j\geq 1}2^{jnr}e^{-c4^j}\int_{\mathbb{R}^n}\Scal_{K+1,\hh}f(x)^2w_0(x)dx
\\
&\lesssim \int_{\mathbb{R}^n}\Scal_{K+1,\hh}f(x)^2w_0(x)dx,
\end{align*}
where $r> r_{w_0}$ is so that $w_0\in A_r$ and $0<2\leq 2r.$
This, together with \eqref{HG}, yields for all $2<\widetilde{q}<p_+(L)$, $\frac{\widetilde{q}}{2}\leq s<\infty$, and $w_0\in RH_{s'}$
\begin{align*}
\int_{\mathbb{R}^n}\left(\widehat{H}(x,u)^{2s}\right)^{\frac{1}{s}} w_0(x) dx\lesssim \int_{\mathbb{R}^n}\left(C(u,s,\widetilde{q})^{s}\Scal_{K+1,\hh}f(x)^{2s}\right)^{\frac{1}{s}}w_0(x)dx.
\end{align*}
Applying Lemma \ref{lemma:extrapol}, part $(b)$, to the pairs of functions $\big(\widehat{H}(x,u)^{2s},C(u,s,\widetilde{q})^{s}\Scal_{K+1,\hh}f(x)^{2s}\big)$ and with $q_0:=s$, we have for all $2<\widetilde{q}<p_+(L)$, $\frac{\widetilde{q}}{2}\leq s<\infty$, $1<q<\infty$, and $\widetilde{w}\in RH_{q'}$
\begin{align}\label{CU2}
\int_{\mathbb{R}^n}\widehat{H}(x,u)^{\frac{2s}{q}}\widetilde{w}(x) dx\lesssim C(u,s,\widetilde{q})^{\frac{s}{q}} \int_{\mathbb{R}^n}\Scal_{K+1,\hh}f(x)^{\frac{2s}{q}}\widetilde{w}(x)dx.
\end{align}
Consider now two cases:  $n\leq (2K+1)p_+(L)$ and $n> (2K+1)p_+(L)$.

\medskip

Assume first that $n\leq (2K+1)p_+(L)$. Fix $0<p<\infty$ and $w\in A_{\infty}$. Take $s>s_w\max\left\{\frac{p}{2},1\right\}$, $\max\left\{2,\frac{2sp_+(L)}{p_+(L)+2s}\right\}<\widetilde{q}<\min\left\{p_+(L),2s\right\}$, (if $p_+(L)=\infty$ take $\widetilde{q}:=2s$),
and $q:=\frac{2s}{p}.$ Then, $2<\widetilde{q}<p_+(L)$, $\frac{\widetilde{q}}{2}\leq s<\infty$, $1\leq s_{w}<q<\infty$, and $w\in RH_{q'}$. Hence, \eqref{CU2} yields
\begin{align*}
F(u)\lesssim u^{-\frac14}\,\left(\int_{\mathbb{R}^n}\widehat{H}(x,u)^{p}w(x) dx\right)^{\frac1p}
\lesssim
u^{-\frac14}\,
C(u,s,\widetilde{q})^{\frac{1}{2}}\|\Scal_{K+1,\hh}f(x)\|_{L^{p}(w)}.
\end{align*}
Note that from our choices of $s$ and $\widetilde{q}$ we have
$$
\theta:=4K-\frac{2n}{\widetilde{q}}+\frac{n}{s}+1>
4K-\frac{2n}{p_+(L)}+1>-1.
$$
Therefore, taking $-1<\widetilde{\theta}<\min\{\theta,0\}$,
we obtain
$$
C(u,s,\widetilde{q})=\int_{2\sqrt{u}}^{1}t^{\theta}\frac{dt}{t}\leq
\int_{2\sqrt{u}}^{1}t^{\widetilde{\theta}}\frac{dt}{t}\lesssim u^{\frac{\widetilde{\theta}}{2}},
$$
and hence
\begin{multline}\label{GSK}
I=
\int_{0}^{\frac{1}{4}}u^{\frac{1}{2}}F(u)\frac{du}{u}
\lesssim
\int_{0}^{\frac{1}{4}}u^{\frac{1}{4}}C(u,s,\widetilde{q})^{\frac{1}{2}}\frac{du}{u}\,\|\Scal_{K+1,\hh}f\|_{L^p(w)}
\\
\lesssim \int_{0}^{\frac{1}{4}}u^{\frac{1+\widetilde{\theta}}{4}}\frac{du}{u}\,\|\Scal_{K+1,\hh}f\|_{L^p(w)}
\lesssim \|\Scal_{K+1,\hh}f\|_{L^p(w)}.
\end{multline}

We next consider the case $n>(2K+1)p_+(L)$. Fix $w\in A_{\infty}$ and $p\in \mathcal{W}_w(0,p_+(L)^{K,*})$. Then $w\in RH_{\left(\frac{p_+(L)^{K,*}}{p}\right)'}$ and $0<p<\frac{p_+(L)n}{s_w(n-(2K+1)p_+(L))}$. There exists $\varepsilon_1>0$ small enough and $2<\widetilde{q}<p_+(L)$, such that
$$
0<p<\frac{\widetilde{q}n}{s_{w}(1+\varepsilon_1)(n-(2K+1)\widetilde{q})}.
$$
Besides, since $\widetilde{q}<\widetilde{q}n/(n-(2K+1)\widetilde{q})$,
there also exists $\varepsilon_2>0$ such that
$$
\widetilde{q}<\frac{\widetilde{q}n}{(1+\varepsilon_2)(n-(2K+1)\widetilde{q})}.
$$
Pick $\varepsilon_0:=\min\{\varepsilon_1,\varepsilon_2\}$, $s:=\frac{\widetilde{q}n}{2(1+\varepsilon_0)(n-(2K+1)\widetilde{q})}$,
and $q:=\frac{2s}{p}.$ Then, $2<\widetilde{q}<p_+(L)$, $\frac{\widetilde{q}}{2}\leq s<\infty$, $1\leq s_{w}<q<\infty$, and $w\in RH_{q'}$. Therefore, \eqref{CU2} yields
\begin{align*}
F(u)\lesssim u^{-\frac14}
\left(\int_{\mathbb{R}^n}\widehat{H}(x,u)^{p}w(x) dx\right)^{\frac1p}\lesssim
u^{-\frac14} C(u,s,\widetilde{q})^{\frac{1}{2}}\|\Scal_{K+1,\hh}f\|_{L^p(w)}.
\end{align*}
Once more, from our choices of $s$ and $\widetilde{q}$ we have
 $$
 \theta:=4K-\frac{2n}{\widetilde{q}}+\frac{n}{s}+1=-1+2\varepsilon_0\left(\frac{n}{\widetilde{q}}-(2K+1)\right)>-1+2\varepsilon_0\left(\frac{n}{p_+(L)}-(2K+1)\right)>-1.
 $$
Hence, taking $-1<\widetilde{\theta}<\min\{\theta,0\}$,
we obtain
\begin{align*}
C(u,s,\widetilde{q})=\int_{2\sqrt{u}}^{1}t^{\theta}\frac{dt}{t}\leq
\int_{2\sqrt{u}}^1t^{\widetilde{\theta}}\frac{dt}{t}\lesssim u^{\frac{\widetilde{\theta}}{2}}.
\end{align*}
Therefore,
\begin{multline*}
I=
\int_{0}^{\frac{1}{4}}u^{\frac{1}{2}}F(u)\frac{du}{u}
\lesssim
\int_{0}^{\frac{1}{4}}u^{\frac{1}{4}}C(u,s,\widetilde{q})^{\frac{1}{2}}\frac{du}{u}\,\|\Scal_{K+1,\hh}f\|_{L^p(w)}
\\
\lesssim \int_{0}^{\frac{1}{4}}u^{\frac{1+\widetilde{\theta}}{4}}\frac{du}{u}\,\|\Scal_{K+1,\hh}f\|_{L^p(w)}
\lesssim \|\Scal_{K+1,\hh}f\|_{L^p(w)}.
\end{multline*}
This and \eqref{GSK} give $I\lesssim \|\Scal_{K+1,\hh}f\|_{L^p(w)}$ for all $w\in A_{\infty}$ and $p\in \mathcal{W}_{w}(0,p_+(L)^{K,*})$ with $p\ge 1$.


\medskip
To estimate $II$ we fix $\frac{1}{4}\leq u<\infty$ and observe that
\begin{align*}
\left|(e^{-\frac{t^2}{4u}L}
 -e^{-t^2L})f\right|\lesssim \int_{\frac{t}{2\sqrt{u}}}^{t}\left|r^2Le^{-r^2L}f\right|\frac{dr}{r}.
\end{align*}
Set $T_{r^2,K}:=(r^2L)^{K+1}e^{-r^2L}$ and pick $\widetilde{r}\geq \max\{\frac{p}{2},r_w\}$ so that $w\in A_{\widetilde{r}}$ and $1\le p\leq 2\widetilde{r}$. Then, applying Jensen's inequality, Fubini, the fact that we are integrating in $t<2\sqrt{u}r$, and Proposition \ref{prop:alpha}, we have
\begin{align*}
F(u)&\lesssim
\left(\int_{\mathbb{R}^n} \left(\int_{0}^{\infty}\left(\int_{\frac{t}{2\sqrt{u}}}^{t}\left(\int_{B(x,2t)}
|(t^2L)^KT_{r^2,0}
f(y)|^2 dy\right)^{\frac{1}{2}} \frac{dr}{r}\right)^2\frac{dt}{t^{n+1}}\right)^{\frac{p}{2}}w(x)dx\right)^{\frac{1}{p}}
\\
&
\lesssim
\left(\int_{\mathbb{R}^n} \left(\int_{0}^{\infty}
\int_{\frac{t}{2\sqrt{u}}}^{t}\int_{B(x,2t)}|(t^2L)^KT_{r^2,0}
f(y)|^2 dy \frac{dr}{r^2}\frac{dt}{t^{n}}\right)^{\frac{p}{2}}w(x)
dx\right)^{\frac{1}{p}}
\\
&
=
\left(\int_{\mathbb{R}^n} \left(\int_{0}^{\infty}
\int_r^{2\sqrt{u}r}\int_{B(x,2t)}|(t^2L)^KT_{r^2,0}
f(y)|^2 dy\frac{dt}{t^{n}}\frac{dr}{r^2}\right)^{\frac{p}{2}}w(x)
dx\right)^{\frac{1}{p}}
\\
&
\lesssim u^K \left(\int_{\mathbb{R}^n} \left(\int_{0}^{\infty}
\int_r^{2\sqrt{u}r}\int_{B(x,4\sqrt{u}r)}|T_{r^2,K}
f(y)|^2 dy \, dt\frac{dr}{r^{n+2}}\right)^{\frac{p}{2}}w(x)
dx\right)^{\frac{1}{p}}
\\
&\lesssim
u^{K+\frac{1}{4}}\left(\int_{\mathbb{R}^n} \left(\int_{0}^{\infty}\int_{B(x,4\sqrt{u}r)}|(r^2L)^{K+1}e^{-r^2L}
f(y)|^2 dy\frac{dr}{r^{n+1}}\right)^{\frac{p}{2}}w(x)
dx\right)^{\frac{1}{p}}
\\
&\lesssim u^{K+\frac{1}{4}+\frac{n\widetilde{r}}{2p}}\left(\int_{\mathbb{R}^n} \left(\int_{0}^{\infty}\int_{B(x,r)}|(r^2L)^{K+1}e^{-r^2L}
f(y)|^2 dy\frac{dr}{r^{n+1}}\right)^{\frac{p}{2}}w(x)
dx\right)^{\frac{1}{p}}
\\
&
=
u^{K+\frac{1}{4}+\frac{n\widetilde{r}}{2p}}\,\|\Scal_{K+1,\hh}f\|_{L^p(w)}
.
\end{align*}
Hence,
$$
II
=
\int_{\frac14}^{\infty} e^{-u}u^{\frac{1}{2}}F(u)\frac{du}{u}
\lesssim
\int_{\frac14}^{\infty} e^{-u}
u^{K+\frac{3}{4}+\frac{n\widetilde{r}}{2p}}
\frac{du}{u}\,\|\Scal_{K+1,\hh}f\|_{L^p(w)}
\lesssim
\|\Scal_{K+1,\hh}f\|_{L^p(w)}
$$
Gathering this estimate and the one obtained for $I$, from the observations made above, the proof of \eqref{est:gfrak} is complete.
\qed

\begin{remark}\label{remark:classesoffunctions}
We note that Theorems \ref{theor:control-SF-Heat} and \ref{theor:control-SF-Poisson} are restricted to functions $f\in L^2(\R^n)$. However, an inspection of the proof and a routine and tedious density argument allow us to extend these estimates to bigger classes of functions. For instance, we can take any function $f\in L^q(\widetilde{w})$ with $\widetilde{w}\in A_\infty$ and $q\in \mathcal{W}_{\widetilde{w}}(p_-(L), p_+(L))$. In that range the  Heat and the Poisson semigroups are uniformly bounded and satisfy off-diagonal estimates, hence the square functions under study are meaningfully defined. Moreover, $L$ has a bounded holomorphic functional calculus on $L^q(\widetilde{w})$ (see \cite{AuscherMartell:I}, \cite{AuscherMartell:II}, and \cite{AuscherMartell:III}). Further details are left to the interested reader.
\end{remark}

\end{document}